
\documentclass[11 pt]{amsart}
\usepackage{amssymb}
\usepackage{amsmath}
\usepackage{amsfonts}
\usepackage{graphicx}
\usepackage{amsthm}
\usepackage{enumerate}
\usepackage[mathscr]{eucal}
\usepackage{verbatim}
\newtheorem{theorem}{Theorem}[section]
\newtheorem{lemma}[theorem]{Lemma}
\newtheorem{observation}[theorem]{Observation}
\newtheorem{proposition}[theorem]{Proposition}

\newtheorem*{definition}{Definition}

\newtheorem{hyps}[theorem]{Hypotheses}
\theoremstyle{plain}

\theoremstyle{remark}

\newcommand{\ci}[1]{_{{}_{\scriptstyle{#1}}}}
\newcommand{\Be}{\begin{equation}}
\newcommand{\Ee}{\end{equation}}

\def\sD{{\mathscr {D}}}
\def\sH{{\mathscr {H}}}
\def\sG{{\mathscr {G}}}
\def\vol{{\text{\rm vol}}}
\def\thiz{{\theta_{i,0}}}
\def\thio{{\theta_{i,1}}}

\def\siz{{s_{i,0}}}
\def\sio{{s_{i,1}}}
\def\qiz{{q_{i,0}}}
\def\qio{{q_{i,1}}}
\def\vars{\varsigma}
\def\ic{{{\text{\rm i}}}}

\def\intslash{\rlap{\kern  .32em $\mspace {.5mu}\backslash$ }\int}
\def\qsl{{\rlap{\kern  .32em $\mspace {.5mu}\backslash$ }\int_{Q_x}}}
\def\Re{\operatorname{Re\,}}

\def\vpi{\pi}
\def\vth{\vartheta}

\def\emph#1{{\it #1 }}

\def\th{\theta}

\def\prel{{\text{\it  prel}}}

\def\cf{{\it cf}}

\def\inn#1#2{\langle#1,#2\rangle}

\def\noi{\noindent}

\def\meas{{\text{\rm meas}}}

\def\lc{\lesssim}
\def\gc{\gtrsim}

\def\ga{\gamma}

\def\eps{\varepsilon}

\def\ka{\kappa}

\def\la{\lambda}
\def\La{\Lambda}
\def\sig{\sigma}

\def\fC{{\mathfrak {C}}}

\def\fa{{\mathfrak {a}}}

\def\fc{{\mathfrak {c}}}

\def\bbN{{\mathbb {N}}}

\def\bbR{{\mathbb {R}}}

\def\bbZ{{\mathbb {Z}}}

\def\cB{{\mathcal {B}}}
\def\cC{{\mathcal {C}}}

\def\cE{{\mathcal {E}}}
\def\cF{{\mathcal {F}}}

\def\cH{{\mathcal {H}}}
\def\cI{{\mathcal {I}}}
\def\cJ{{\mathcal {J}}}

\def\cL{{\mathcal {L}}}
\def\cM{{\mathcal {M}}}

\def\cP{{\mathcal {P}}}

\def\cR{{\mathcal {R}}}

\def\cZ{{\mathcal {Z}}}

\def\C{{\hbox{\bf C}}}

\def\f{{\overline{f}}}
 at 10 true pt

\def\be#1{\begin{equation}\label{#1}}
\def\ee{\end{equation}}
\def\bas{\begin{align*}}
\def\eas{\end{align*}}
\def\bi{\begin{itemize}}
\def\ei{\end{itemize}}

\def\eps{\varepsilon}
\def\emph#1{{\it #1}}
\def\textbf#1{{\bf #1}}
\def\intslash{\rlap{\kern  .32em $\mspace {.5mu}\backslash$ }\int}
\def\qsl{{\rlap{\kern  .32em $\mspace {.5mu}\backslash$ }\int_{Q_x}}}

\begin{document}


\title
[An endpoint estimate with affine arclength measure]
{Restriction of Fourier transforms to curves:
\\ An endpoint estimate \\ with affine  arclength measure}

\author[]
{Jong-Guk Bak \ \ Daniel M. Oberlin \  \  Andreas Seeger}

\address {J. Bak \\ Department of Mathematics\\ Pohang University of Science and Technology
\\
Pohang 790-784, Korea}
\email{bak@postech.ac.kr}

\address
{D. M.  Oberlin \\
Department of Mathematics \\ Florida State University \\
 Tallahassee, FL 32306, USA}
\email{oberlin@math.fsu.edu}

\address{A. Seeger   \\
Department of Mathematics\\ University of Wisconsin-Madison\\Madison,
WI 53706, USA}
\email{seeger@math.wisc.edu}

\subjclass{42B10, 46B70}
\keywords{Fourier transforms of measures on curves,
Fourier restriction problem, affine arclength measure, interpolation of multilinear operators, interpolation of weighted Lebesgue spaces}


\thanks{Supported in part by National Research Foundation 
grant 2010-0024861
from the Ministry of Education, Science and Technology of Korea,
and by 
National Science Foundation  grants DMS-0552041 and    DMS-0652890.
}

\begin{abstract}
Consider the Fourier restriction operators
associated to  curves in $\mathbb R^d$, $d\ge 3$.
We prove 
 for various classes of curves
the endpoint restricted strong
type estimate with respect to affine arclength measure on the curve.
An essential ingredient is an  interpolation result  for
multilinear operators with symmetries 
acting on  sequences  of vector-valued functions.
\end{abstract}

\maketitle

\section{Introduction}\label{intro}
Let $t\mapsto \gamma(t)$ define a  curve in $\bbR^d$,
defined for $t$ in a parameter interval $I$. We shall assume that
$\gamma$ is at
least of class $C^d$ on $I$.

In this paper we investigate the mapping properties of the Fourier 
restriction operator associated to the curve, given for Schwartz functions on
$\bbR^d$  by
$$\cR f (t) = \widehat f(\gamma(t));$$
here the Fourier transform is defined by $\widehat f(\xi)=\int f(y) e^{-\ic \inn{y}{\xi} }d\xi$. 
$\cR f$ will be measured in Lebesgue spaces 
$L^q(I; d\la)$
where  $d\la= w(t)dt$ is {\it affine arclength measure} 
with  weight 
\begin{equation}\label{weight}w(t) = |\tau(t)|^{\frac{2}{d^2+d}} \quad \text{where }
\tau(t)= \det (\gamma'(t),\dots, \gamma^{(d)}(t)).
\end{equation}
The relevance of affine arclength measure for harmonic analysis  
 has been discussed in   \cite{D2} and \cite{obcurv}. 
There is an invariance under change of variables and 
reparametrizations.  Fourier restriction theorems  for the  
case of  \lq nondegenerate\rq \  curves (with nonvanishing $\tau$)
are  supposed to extend to large classes of \lq degenerate\rq \ curves  
when arclength measure is replaced 
by affine arclength measure, 
with uniform 
constants in the estimates. Finally  
 the  choice of
affine arclength measure is optimal
up to multiplicative constants, 
in a sense made precise in the next section.

For nondegenerate curves affine arclength measure is 
comparable to the standard arclength measure on any compact interval.
Note that for the model case $(t,t^2,\dots, t^d)$ the weight $w$
is constant (equal to $(d!)^{\frac{2}{d^2+d}})$.
The sharp $L^p\to L^\sigma$ estimates for this case have been obtained by Zygmund \cite{Z} and H\"ormander \cite{H} in the case $d=2$ and
by Drury \cite{D} in
higher dimensions. Namely, one gets $L^p(\bbR^d)\to L^\sigma(\bbR)$ boundedness for
$1<p<p_d:=\frac{d^2+d+2}{d^2+d}$, $p'=\sigma \frac{d(d+1)}2$. A nonisotropic
scaling reveals that for a global estimate this relation between $p$ and $\sigma$ is necessary in this case. Moreover, it follows from a result by Arkhipov, Chubarikov and Karatsuba  \cite{act}
that the given  range of $p$  is optimal. One can ask for weaker
estimates at the endpoint $p_d$ which imply the $L^p\to L^\sigma$ estimates by 
interpolation. The iterative method  by Drury fails to give information
at the endpoint. In two dimensions,
Beckner, Carbery, Semmes and Soria \cite{bcss} have
shown that even the restricted weak type estimate
fails  at the endpoint $p_2=4/3$. However, in \cite{bos1} the authors proved
for the nondegenerate  model case
that in dimensions $d\ge 3$ the Fourier restriction operator is of
{\it restricted strong type} $(p_d,p_d)$, i.e. maps the Lorentz space $L^{p_d,1}(\bbR^d) $  to $L^{p_d}(\bbR, dt)$.
This result is optimal with respect to the secondary Lorentz exponents.

It is natural to ask whether for more general classes of curves
the endpoint inequality
\begin{equation} \label{gammaendpt}
\Big(\int_I |\widehat f\circ \gamma|^{p_d} d\la \Big)^{1/p_d} \lc
\|f\|_{L^{p_d,1}(\bbR^d)},\quad
p_d=\frac{d^2+d+2}{d^2+d},
\end{equation}
holds true 
with affine arclength measure $d\la$.
This estimate of course implies 
the best possible $L^p(d\la)\to L^q$ bounds which for some  classes of curves 
 were proved in the first two papers of this series \cite{bos1}, \cite{bos2},
 building on earlier work by Drury and Marshall 
\cite{DM1}, \cite{DM2}. See also  the very  recent work by 
M\"uller and Dendrinos \cite{DeM} for further extensions. In two dimensions 
the endpoint bound fails  and sharp Lebesgue space estimate can be found in 
\cite{sj}, \cite{ob}.

Here we prove  
\eqref{gammaendpt} for two classes
of  curves. We first consider the case of ``monomial'' curves of the form
\begin{equation}\label{monomial}
t\mapsto \gamma_a(t)=(t^{a_1}, t^{a_2},\dots, t^{a_d}), \quad 0<t<\infty
\end{equation}
where $a=(a_1,\dots, a_d)$ are arbitrary real numbers, $d\ge 3$. 

\begin{theorem} \label{powerthm}
Let $d\ge 3$ and let  $w_a dt$ denote the affine arclength measure 
 for the curve
\eqref{monomial}.
Then there is $C(d)<\infty$ so that for
all $f\in L^{p_d ,1}(\bbR^d)$
\begin{equation} \label{affrestr2}
\Big(\int_0^\infty
 |\widehat f (\gamma_a(t))|^{p_d} w_a(t) dt\Big)^{1/p_d} \le
C(d)\|f\|_{L^{p_d ,1}(\bbR^d)}.
\end{equation}
\end{theorem}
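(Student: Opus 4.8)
The plan is to exploit the homogeneity of the monomial curve. First I would dispose of the degenerate cases: if the $a_i$ are not pairwise distinct, or if some $a_i=0$, then two columns of $(\gamma_a',\dots,\gamma_a^{(d)})$ coincide or one vanishes, so $\tau_a\equiv 0$, $w_a\equiv 0$, and \eqref{affrestr2} is vacuous; hence assume the $a_i$ are distinct and nonzero and reorder them so that $a_1<\cdots<a_d$. A direct computation then gives $\tau_a(t)=V(a)\,t^{(a_1+\cdots+a_d)-d(d+1)/2}$ with $V(a)=\pm\big(\prod_i a_i\big)\prod_{i<j}(a_j-a_i)\neq 0$, so the weight $w_a(t)=c_a\,t^{\beta_a}$ is a \emph{pure power}, and $\gamma_a$ obeys the exact dilation law $\gamma_a(\lambda t)=D_\lambda\gamma_a(t)$ with $D_\lambda=\diag(\lambda^{a_1},\dots,\lambda^{a_d})$. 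Because $p_d$ — and the automatically matching target exponent, since $p_d'=p_d\cdot d(d+1)/2$ forces the $L^\sigma$ exponent to equal $p_d$ — sits at the scaling-critical value, $\cR$ intertwines the anisotropic dilations $f\mapsto f\circ D_\lambda$ with $g\mapsto g(\lambda\,\cdot)$ up to explicit Jacobian factors consistent with $d\la$; after an affine renormalization, which leaves $d\la$ unchanged, one may also normalize $|V(a)|$. I would then decompose $(0,\infty)=\bigcup_{k\in\bbZ}[2^k,2^{k+1})$, write $\cR=\sum_{k\in\bbZ}\cR_k$ with $\cR_k f=\mathbf 1_{[2^k,2^{k+1})}\cR f$, and note that the dilation law identifies $\cR_k$ with $\cR_0$ conjugated by $D_{2^k}$.

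Next, the single-scale estimate. On $[1,2)$ the curve $\gamma_a$, after the affine renormalization, is nondegenerate with torsion comparable to $1$, and the relevant uniform affine-arclength restriction bounds for monomial curves on a fixed interval are available from the earlier papers of the series \cite{bos1}, \cite{bos2} (via affine invariance, after a further finite decomposition of $[1,2)$ into pieces on which $\gamma_a$ is a controlled perturbation of the moment curve): namely the restricted strong type $(p_d,p_d)$ for $\cR_0$, together with $L^p(d\la)\to L^q$ bounds throughout an open neighborhood of the critical line. All constants here depend only on $d$.

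The heart of the matter is summing the scales, which cannot be done naively: under $D_{2^k}$ the $L^{p_d,1}$-norm of the rescaled input carries a geometric factor $|\det D_{2^k}|^{1/p_d-1}$ that is not summable, and the rescaled pieces are not $L^{p_d,1}$-almost-disjoint, so uniform per-scale bounds do not add up. To circumvent this I would pass to a multilinear reformulation — on the extension side, via the $d$-fold convolution structure of $\|\cE g\|_{L^{p_d'}}$, or directly through the $d$-linear form $(f_1,\dots,f_d)\mapsto\int_0^\infty\prod_i\widehat f_i(\gamma_a(t))\,d\la(t)$ — which is symmetric under the symmetric group $S_d$ and covariant under the dilations $D_\lambda$; after a dyadic decomposition of the inputs this becomes a multilinear operator acting on sequences of vector-valued functions and commuting with the simultaneous shift of the sequence index. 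The paper's interpolation theorem for such operators then applies: from (i) the $S_d$- and shift-symmetry, (ii) uniform bounds for the diagonal-index (single-scale) part in a neighborhood of the critical point, and (iii) summable decay for the off-diagonal-index part — an almost-orthogonality estimate stemming from the nonvanishing Jacobian of $(t_1,\dots,t_d)\mapsto\gamma_a(t_1)+\cdots+\gamma_a(t_d)$ together with non-stationary phase — one reads off the endpoint restricted strong type $(p_d,p_d)$, which is exactly \eqref{affrestr2}.

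The principal difficulty is precisely this summation: the scaling is critical but not integrable, so any lossy step — for instance settling for per-scale restricted \emph{weak} type — degrades the global conclusion to restricted weak type and destroys the $L^{p_d,1}$ endpoint. The interpolation-with-symmetries mechanism is the device that trades the exact symmetry, plus strong bounds at neighboring exponents, for the sharp endpoint, and arranging its hypotheses in verifiable form is where most of the effort goes. A secondary, more technical, point is establishing the single-scale bounds with constants depending only on $d$, uniformly over \emph{all} admissible exponent vectors $a$ — in particular when the components of $a$ cluster or become large — which requires exhibiting the affinely renormalized curve $\gamma_a|_{[1,2)}$ as a controlled perturbation of the moment curve on each piece of a suitable decomposition.
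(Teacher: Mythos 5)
Your outline has two genuine gaps, and both sit exactly where the real work is. First, the ``single-scale'' input you take for granted is not available: a restricted strong type $(p_d,p_d)$ bound for $\gamma_a$ on $[1,2)$ with constant depending only on $d$, uniformly over \emph{all} exponent vectors $a$, is essentially equivalent in difficulty to the theorem itself. The earlier papers \cite{bos1}, \cite{bos2} give the endpoint for the model curve and for specific classes with nonvanishing torsion; they do not give uniformity over the monomial family, and your claim that after an affine renormalization and a decomposition of $[1,2)$ into boundedly many pieces the curve is a ``controlled perturbation of the moment curve'' breaks down precisely in the regimes you flag yourself (components of $a$ clustering or growing), where the number of pieces and the quality of the approximation cannot be controlled by $d$ alone. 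In the paper the only input taken from \cite{bos1} is qualitative: the a priori \emph{finiteness} of the constant $\cB$ in \eqref{defofB} for each fixed curve, which is then bootstrapped; a quantitatively uniform per-scale endpoint bound is never assumed, and assuming it makes your argument circular. Second, your summation mechanism is not an argument. Theorem \ref{interpolationtheorem} has no hypothesis about shift-commutation or about ``diagonal versus off-diagonal'' sequence indices with summable off-diagonal decay; its hypotheses are permutation-symmetric endpoint bounds with \emph{distinct} weight exponents $\delta_i$, and in the paper the sequence index is not the dyadic $t$-scale at all but the level set $\Omega[w,k]=\{t:2^k\le w(t)<2^{k+1}\}$ of the affine arclength weight (the block Lorentz spaces $b^q_s(w,X)$), used to upgrade $b^1_{1/Q}(w,L^{Q,1})$ inputs to $L^Q(w)$. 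At the critical exponent the rescaled dyadic pieces contribute terms of equal size, and ``nonvanishing Jacobian plus non-stationary phase'' does not produce the summable almost-orthogonality you would need; nothing in your sketch replaces this.

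What the paper actually does is different in structure: following Drury--Marshall it reparametrizes exponentially, $\Gamma^b(t)=(b_1^{-1}e^{b_1t},\dots,b_d^{-1}e^{b_dt})$, so that every offspring curve $\sum_j\gamma(\cdot+\kappa_j)$ is an affine image of the original curve and the torsion inequality \eqref{torsionoffspringstr} holds with $\fc_2=1$; the Jacobian lower bound \eqref{firstmainhyp} (with the geometric mean of torsions times the Vandermonde factor, not merely nonvanishing) comes from the Drury--Marshall total positivity estimate, and the bounded multiplicity from Steinig. Theorem \ref{mainthm} then bootstraps the a priori finite constant $\cB$: the multilinear operator $\cM$ is decomposed according to the size of the Vandermonde determinant, the $L^2$ and $L^{Q,\infty}$ pieces are combined by Bourgain's balancing trick (the $L^{Q,\infty}$ piece itself uses $\cB$ through the offspring curves), and Theorem \ref{interpolationtheorem}, applied in the block Lorentz space setting with perturbed exponents chosen so that the $\delta_i$ are not all equal, converts this into $\cB\lc\cC^{1/d}\cB^{\frac{d-2}{d(d+2)}}$, hence $\cB\le C(d)\cC^{\frac{d+2}{d^2+d+2}}$, uniformly in $b$ and $R$; letting $R\to\infty$ and treating $[0,1]$ and $[1,\infty)$ separately gives \eqref{affrestr2}. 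If you want to pursue your dilation-based decomposition, you would have to supply both the uniform single-scale endpoint estimate and a genuine orthogonality-across-scales mechanism, neither of which is currently in your proposal.
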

Note that the constant in \eqref{affrestr2} is universal in the sense that it does not depend on $a_1,\dots, a_d$.

A similar result holds for `simple' polynomial curves in $\bbR^d$, $d\ge 3$,

\begin{equation} \label{polcurve}
\Gamma_b(t)= \Big(t, {t^2\over 2!},\cdots,
{t^{d-1}\over (d-1)!} ,P_b(t)\Big), \quad t\in \bbR,
\end{equation}
 where $P_b$  is an arbitrary
polynomial of degree $N\ge 0$, with the coefficients $(b_0,
\cdots, b_N)=b\in \bbR^{N+1}$, that is, 
$P_b(t) = \sum_{j=0}^N b_j t^j .$
Note that the affine arclength measure in this case is
given by $W_b(t) dt$ where $W_b(t)= |P_b^{(d)}(t)|^{\frac{2}{d^2+d}}.$
Then we have
\begin{theorem} \label{polthm}
There is $C(N)<\infty$ so that for
all $f\in L^{p_d ,1}(\bbR^d)$,  $b\in \bbR^{N+1}$,
\begin{equation} \label{affrestrpol}
\Big(\int_0^\infty
 |\widehat f (\Gamma_b(t))|^{p_d} W_b(t) dt\Big)^{1/p_d} \le
C(N)\|f\|_{L^{p_d ,1}(\bbR^d)}.
\end{equation}
\end{theorem}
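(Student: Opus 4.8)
The plan is to deduce Theorem~\ref{polthm} from Theorem~\ref{powerthm} by a decomposition-and-rescaling argument that reduces the simple polynomial curve $\Gamma_b$ to pieces that look, after an affine change of variables in $\bbR^d$ and a reparametrization in $t$, like the monomial curve $\gamma_a$ with $a=(1,2,\dots,d-1,m)$ for the various exponents $m$ appearing in $P_b$. First I would record the two basic invariances: (i) the estimate \eqref{affrestrpol} is invariant under affine self-maps of $\bbR^d$, because affine arclength measure $W_b(t)\,dt = |\det(\Gamma_b'(t),\dots,\Gamma_b^{(d)}(t))|^{2/(d^2+d)}dt$ transforms by the (constant) Jacobian in a way that exactly cancels against the $L^{p_d,1}\to L^{p_d}$ scaling of the linear map on the Fourier side; and (ii) it is invariant under affine reparametrizations $t\mapsto \alpha t+\beta$ of the parameter. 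The point of (i)-(ii) is that we may freely normalize the first $d-1$ coordinates of $\Gamma_b$ (they are already in the model form) and absorb the lower-order terms of $P_b$ of degree $\le d-1$ into the first $d-1$ coordinates, so that effectively $P_b(t)=\sum_{j\ge d} b_j t^j$.

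The core step is a \emph{dyadic decomposition of the parameter line together with a stopping-time / almost-orthogonality argument}, in the spirit of \cite{bos2} and \cite{DeM}. On a dyadic interval $|t|\sim 2^k$ one coefficient $b_{m(k)} t^{m(k)}$ dominates $P_b^{(d)}(t)$, and after the rescaling $t=2^k s$ the curve on that interval is, up to an affine map of $\bbR^d$ (depending on $k$ and $b$) and up to lower-order perturbations controlled by the dominance, a bounded perturbation of the monomial curve $\gamma_{(1,\dots,d-1,m(k))}(s)$ for $s$ in a fixed compact interval. Applying Theorem~\ref{powerthm} (whose constant does \emph{not} depend on the exponent vector $a$ — this universality is exactly what makes the argument uniform in $b$ and $N$) gives the estimate \eqref{affrestrpol} on each dyadic piece with a constant independent of $k$. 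Summing over $k$ then requires an almost-orthogonality statement: the contributions of well-separated dyadic blocks, measured in $L^{p_d}$, should add up in $\ell^{p_d}$ (or $\ell^1$ after an $\varepsilon$-room argument) because the relevant frequency supports — or the "tubes" dual to the pieces of the curve — overlap with bounded multiplicity. This is where one uses the restricted-strong-type formulation: testing on a characteristic function $f=\chi_E$ and the real-interpolation structure of $L^{p_d,1}$ makes it possible to run the summation via a geometric/combinatorial counting of interactions rather than a pointwise kernel estimate.

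The main obstacle I expect is precisely this gluing of the dyadic pieces: making the almost-orthogonality rigorous when the dominant monomial $m(k)$ changes with $k$ and when the affine normalizing maps $A_k$ degenerate (the smallest singular value of $A_k$ can go to $0$ as $k\to\pm\infty$ or as the roots of derivatives of $P_b$ are approached). One has to show that the overlap of the images $A_k^{-\ast}$ of the canonical frequency boxes is bounded uniformly in $b$ and in the degree $N$, i.e. that only $O_N(1)$ blocks can be "comparable." I would handle this by the usual device of partitioning the parameter line into $O(N)$ intervals on which $\tau(t)=P_b^{(d)}(t)$ (a polynomial of degree $\le N-d$) is monotone and has no sign change, so that on each such interval the decomposition is genuinely a single clean dyadic/Whitney family with a fixed dominant power, and then summing the $O(N)$ resulting estimates; the $N$-dependence of $C(N)$ enters only here. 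A secondary, more technical point is verifying that the lower-order perturbation terms in each rescaled block are small enough in $C^d$ to be absorbed — this is a routine but careful computation using the dominance $|b_{m(k)}2^{km(k)}|\gtrsim |b_j 2^{kj}|$ and the fact that the perturbation affects only the last coordinate.
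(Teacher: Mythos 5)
Your reduction to Theorem \ref{powerthm} has two genuine gaps, and the second one is exactly the hard point of the problem. First, Theorem \ref{powerthm} is an estimate for the exact monomial curves $\gamma_a$; after your rescaling $t=2^ks$ each dyadic block of $\Gamma_b$ is only a $C^d$-small \emph{perturbation} of a monomial model, and nothing in the paper (or in your sketch) supplies a uniform endpoint $L^{p_d,1}\to L^{p_d}$ bound for such perturbations. The remark after the proof of Theorem \ref{powerthm} indicates that perturbative versions can be extracted from Dendrinos--M\"uller, but only locally and with no global uniformity, so "absorb the lower-order terms" is not a routine step at this endpoint. Second, and more seriously, the summation over the dyadic blocks: each block obeys the same bound $\lesssim\|f\|_{L^{p_d,1}}$, so a direct sum diverges, and the "almost-orthogonality" you invoke is precisely what is unavailable at the restricted-strong-type endpoint $p_d$ --- there is no $\varepsilon$ of room, and Lorentz-space restricted-type bounds do not sum over countably many pieces without loss. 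Your proposed fix (partition $\bbR$ into $O(N)$ intervals where $P_b^{(d)}$ is single-signed and monotone) does not resolve this: within one such interval the Whitney/dyadic family is still infinite, the dominant exponent $m(k)$ and the normalizing affine maps still vary with $k$, and the gluing problem is untouched.

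The paper avoids both issues by never decomposing into monomial models. It proves a general criterion (Theorem \ref{mainthm}) giving a bound on $\cB(\fC)$ depending only on $d$, the multiplicity constant $N_1$, and the constants $\fc_1,\fc_2$ in Hypotheses \ref{hypess}, and then verifies those hypotheses directly for the simple curve $\Gamma_b$: the offspring curves are affine images of the original (triangular structure); inequality \eqref{torsionoffspringstr} holds with $\fc_2=1$ on any interval where $\phi^{(d)}=P_b^{(d)}$ has constant sign (Observation \ref{simpleobs}, since $\tau_{\gamma_\kappa}$ is a sum of values of $\phi^{(d)}$); and the Jacobian lower bound \eqref{firstmainhyp} is proved in the strengthened form of Proposition \ref{jacest} via the integral representation $J_d=\int\phi^{(d)}\Psi$ of Lemma \ref{Psiest} combined with the polynomial sublevel-set Lemma \ref{polynomialsublevelest}; bounded multiplicity comes from \cite{DM1}, \cite{DeW}. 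This yields the endpoint estimate on each of only $C(N,d)$ intervals, so the final summation is a trivial finite triangle inequality (this is the sole source of the $N$-dependence), plus a limiting argument to reach the full intervals. If you want to salvage your route, you would need either a perturbation-stable, globally uniform version of Theorem \ref{powerthm} or a genuinely new orthogonality mechanism at $p_d$; as written, the proposal does not prove the theorem.
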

It would be interesting to prove a similar theorem for general polynomial curves $(P_1(t), \dots, P_d(t))$, with a bound depending only on the highest degree.
 However, currently we do not even know
the sharp $L^p\to L^q(w)$ bounds in the optimal range
$p\in [1, \frac{d^2+d+2}{d^2+d})$. For the smaller range
$1\le p<\frac{d^2+2d}{d^2+2d-2}$ (corresponding to the range in Christ's paper 
\cite{Ch} for the nondegenerate case), such universal $L^p\to L^q(w)$ bounds 
have been recently proved by Dendrinos and Wright \cite{DeW}. Their result 
can be slightly extended by combining an argument by Drury \cite{D2}
 with estimates by Stovall \cite{sto} on averaging operators, 
see  \S\ref{Drury-estimate}.

\subsection*{An interpolation theorem}
As in  previous papers on restriction theorems for curves 
the results rely on the analysis 
of multilinear operators with a high degree of symmetry. 
In \cite{bos1} the operators acted on $n$-tuples of functions in 
Lebesgue or  Lorentz  spaces, and it was important to use an interpolation procedure introduced by Christ in \cite{Ch} (\cf.  also \cite{janson}, \cite{grkal} for related results). In the presence of weights one is  led  to consider 
interpolation  
results for $n$-linear operators acting on  products of  $\ell_s^p(X)$ spaces
and which have values in a Lorentz space;  here $X$ is a 
quasi-normed space and $\ell^p_s(X)$ is the space of $X$ valued sequences
$\{f_k\}_{k\in \bbZ}$  for which 
$(\sum_{k\in \bbZ }2^{ksp}\|f_k\|_X^p)^{1/p}<\infty$.  
For the relevance to the restriction problem see  also the remarks following the statement of  Theorem \ref{interpolationtheorem} below.

We recall some terminology from interpolation theory. A quasi-norm on a  vector space has the same properties as a norm except that the triangle inequality is weakened to 
$\|x+y\|\le C(\|x\|+\|y\|)$ for some constant $C$. Let $0<r\le 1$. 
The topology generated by the balls defined by this norm is called {\it $r$-convex} if there is a constant $C_1$ so that
\Be\label{triangleineq}\Big\|\sum_{i=1}^n x_i\Big\|_X\le C_1 (\sum_{i=1}^n \|x_i\|_X^r)^{1/r}
\Ee  holds 
for any finite sums of elements in $X$.
The Aoki-Rolewicz theorem states that every quasi-normed space is $r$-convex for some $r>0$ (see also \S3.10 in \cite{BL} for a generalization).
Obviously any normed space is $1$-convex.
Hunt \cite{hu} 
 showed that Lorentz spaces $L^{pq}$ are $r$-convex for $r< \min\{1,p,q\}$
and they are normable for $p,q>1$. 
The Lorentz space $L^{r,\infty}$ is $r$-convex for $0<r<1$; this is a result 
by Kalton 
\cite{kalton} and by   Stein, Taibleson and Weiss  \cite{stw}. This fact plays 
a role in the proof of sharp endpoint theorems, in \cite{bos1} as well as in the present paper.

The Lions-Peetre interpolation theory can be extended to quasi-normed 
spaces (see \S3.11 of \cite{BL}). Here one works with couples $\overline X=(X_0,X_1)$ of 
compatible quasi-normed spaces, i.e.  both $X_0$ and $X_1$ are continuously embedded in some topological vector space. We shall use both 
the $K$-functional  defined on $X_0+X_1$, given by $K(t,f;\overline X)=
\inf_{f=f_0+f_1} [\|f_0\|_{X_0}+t\|f_1\|_{X_1}]$ and the $J$-functional defined on $X_0\cap X_1$ by $J(t,f,\overline X)= \max\{\|f\|_{X_0}, t\|f\|_{X_1}\}$.
For $0<\theta<1$, $0<q<\infty$ 
 the interpolation space $\overline X_{\theta,q}$ 
is the space of $f\in X_0+X_1$
for which 
$\|f\|_{\overline X_{\theta,q}}= (\sum_{l\in \bbZ} [2^{-l\theta} K(2^l, f;\overline X)]^q)^{1/q} $ is finite. 
Similarly one defines $\overline X_{\theta,\infty}$ with quasi-norm 
$\|f\|_{\overline X_{\theta,\infty}}=
 \sup_{l\in \bbZ} 2^{-l\theta} K(2^l, f;\overline X)$. The space $X_0\cap X_1$ is dense 
in $\overline X_{\theta,q}$ but not necessarily in
$\overline X_{\theta,\infty}$;
 the closure of  $X_0\cap X_1$ in 
$\overline X_{\theta,\infty}$ is denoted by
$\overline X_{\theta,\infty}^0$ and consists of all 
$f\in \overline X_{\theta,\infty}$ for which $2^{-l\theta} K(2^l,f:\overline X)$ tends to $0$ as $l\to \pm\infty$.
An   equivalent
norm on $\overline X_{\theta,q}$ is given by 
$\|f\|_{\overline X_{\theta,q;J}}=  \inf (\sum_{l\in \bbZ} [2^{-l\theta} J(2^l, u_l;\overline X)]^q)^{1/q} $, where the infimum is taken over all representations 
$f=\sum_l u_l$,  $u_l\in X_0\cap X_1$, with convergence in $X_0+X_1$ (see the equivalence theorem 3.11.3 in \cite{BL}).

For the formulation and  proofs of interpolation results for multilinear operators with symmetries it is convenient to use the notion of a {\it doubly stochastic $n\times n$ matrix}, i.e. a matrix $A=(a_{ij})_{i,j=1,\dots, n}$ for which
$a_{ij}\in [0,1]$, $i,j=1,\dots, n$, $\sum_{j=1}^n a_{ij}=1$, $i=1,\dots, n$ and
$\sum_{i=1}^n a_{ij}=1$, $j=1,\dots, n.$
Doubly stochastic matrices arise naturally in the interpolation of  operators
with symmetries under permutations; this is because of Birkhoff's theorem
 (\cite{birk}, \cite{marcusminc})
which states that
the set of all doubly stochastic matrices (also called the Birkhoff polytope) 
is precisely the convex hull of the 
permutation matrices.
We shall  denote by $DS(n)$ the set of all doubly stochastic $n\times n$ 
matrices and by $DS^\circ(n)$ the subset of matrices in $DS(n)$ 
for which all entries lie in the open interval $(0,1)$.
In what follows given $n$ numbers $s_1,\dots,s_n$ we let $\vec s $ be the column vector with entries $s_i$, and $\vec e_m$ be the $m$th coordinate vector.

The following interpolation theorem plays a crucial role in the proof of Theorems \ref{powerthm} and \ref{polthm}.
\begin{theorem}\label{interpolationtheorem}
Suppose we are given $m\in \{1,\dots, n\}$ and 
$\delta_1,\dots, \delta_n \in \bbR$ so that
the numbers $\delta_i$ with  $i\neq m$ are not all equal.
Let $0<r\le 1$, and 
let $q_1,\dots, q_n\in [r,\infty]$ such that  $\sum_{i=1}^n q_i^{-1}= r^{-1}$. Let $V$ be an $r$-convex Lorentz space, and let 
$\overline X=(X_0, X_1)$ be a couple of compatible complete quasi-normed spaces.
Let $T$ be a multilinear operator defined on $n$-tuples of $X_0+X_1$ valued sequences
and suppose that  for every permutation $\vpi$ on $n$ letters we have the
inequality
\begin{equation}\label{Thypothesis}
\|T(f_{\vpi(1)},\dots, f_{\vpi(n)})\|_{V} \le 
 \|f_m\|_{\ell_{\delta_m}^r(X_1)}
\prod_{i\neq m} \|f_i\|_{\ell_{\delta_i}^r(X_0)}\,.
\end{equation}
Then  for every $A\in DS^\circ(n)$ and every $B\in DS(n)$ such that  
$$B\vec e_m= (r/q_1,\dots, r/q_d)^T$$
 there is  $C=C(A,B,\vec\delta,r)$ so that
for  $\vec s=BA \vec\delta$ and $\vec \theta= BA \vec e_m$  
\begin{equation}\label{Tconcl}
\|T(f_1,\dots, f_{n})\|_{V} \le C
\prod_{i=1}^n \|f_i\|_{\ell_{s_i}^{q_i}(\overline X_{\theta_i,q_i})}\,,
\end{equation}
for all $(f_1,\dots, f_n)\in \prod_{i=1}^n\ell_{s_i}^{q_i}(\overline X_{\theta_i,q_i})$.

In particular 
\begin{equation}\label{Tconclbalanced}
\|T(f_1,\dots, f_{n})\|_{V} \lc
\prod_{i=1}^n \|f_i\|_{\ell_{\sigma}^{nr}(\overline X_{\frac 1n, nr})}\,,\quad
\sigma = \frac{1}{n}\sum_{i=1}^n\delta_i\,.
\end{equation}
\end{theorem}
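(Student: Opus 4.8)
The plan is to prove \eqref{Tconcl} by a multilinear interpolation argument in stages, exploiting the permutation symmetry hypothesis \eqref{Thypothesis} together with the structure of doubly stochastic matrices via Birkhoff's theorem, and then deduce \eqref{Tconclbalanced} as the special case where $A = B = \frac1n J$ (the matrix all of whose entries are $1/n$). First I would record the ``seed'' estimates: since \eqref{Thypothesis} holds for \emph{every} permutation $\vpi$, we actually have, for each $j \in \{1,\dots,n\}$, a bound in which the ``$X_1$'' slot sits in position $j$ and all other slots are in ``$X_0$'', with the exponent vector being the corresponding permutation of $\vec\delta$ and of $\vec e_m$. Thus the family of available endpoint estimates is indexed by the $n$ permutation matrices $P$: for each such $P$ we have $\|T(f_1,\dots,f_n)\|_V \lesssim \prod_i \|f_i\|_{\ell^r_{(P\vec\delta)_i}(X_{(P\vec e_m)_i})}$, where the $i$-th space is $X_1$ if $(P\vec e_m)_i = 1$ and $X_0$ if it is $0$ — i.e.\ the ``$J$-method'' endpoint data live over the vertices of the Birkhoff polytope.

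The core step is then a real-interpolation lemma for multilinear operators that converts a convex combination of such endpoint estimates into a product of $\ell^q_s(\overline X_{\theta,q})$ norms. Concretely: given $A \in DS^\circ(n)$, write $A = \sum_\nu \lambda_\nu P_\nu$ as a convex combination of permutation matrices with all $\lambda_\nu > 0$ (possible since $A$ has all entries in $(0,1)$, so it lies in the interior and in particular is a genuine convex combination involving enough vertices). Applying the endpoint bound associated to each $P_\nu$ and interpolating the $n$-linear operator $T$ simultaneously in all $n$ arguments — using the $J$-method description of $\overline X_{\theta,q}$ recalled in the excerpt and the $r$-convexity of $V$ to sum the pieces — yields, for the intermediate parameters $\vec s_A = A\vec\delta$, $\vec\theta_A = A\vec e_m$, a bound of the form $\|T(f_1,\dots,f_n)\|_V \lesssim \prod_i \|f_i\|_{\ell^r_{(s_A)_i}(\overline X_{(\theta_A)_i, r})}$. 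This is the multilinear Christ-type interpolation from \cite{Ch}, adapted to the sequence-space setting; the hypothesis that the $\delta_i$ with $i \neq m$ are not all equal guarantees that the $\theta_i$ land strictly inside $(0,1)$ for $i \neq m$ (and the constraint on $B\vec e_m$ controls the $m$-th one), so that the interpolation spaces are nontrivial and the constant is finite. A second application of the same machinery — now interpolating between the estimates obtained for a suitable family $\{A'\}$ of matrices in $DS^\circ(n)$, with weights given by $B$, which is allowed to be merely in $DS(n)$ — upgrades the inner secondary exponent from $r$ to the prescribed $q_i$ and replaces $A\vec\delta$, $A\vec e_m$ by $BA\vec\delta$, $BA\vec e_m$; here the identity $\sum_i q_i^{-1} = r^{-1}$ is exactly what makes the multilinear Hölder bookkeeping on the secondary indices close, and the normalization $B\vec e_m = (r/q_1,\dots,r/q_n)^T$ pins down how the $\ell^{q_i}$ integrability is distributed across the $n$ factors.

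The main obstacle I expect is the second, ``secondary-exponent'' interpolation: one must interpolate an $n$-linear map across all arguments at once while keeping the inner Banach/quasi-Banach structure $\overline X_{\theta_i, \cdot}$ intact and only moving the outer $\ell^{q_i}$ summability, and the arithmetic of which convex combinations of the vertex estimates produce a given target $(BA\vec\delta, BA\vec e_m)$ with the correct secondary indices requires care — this is where Birkhoff's theorem, the open condition $A \in DS^\circ(n)$, and the "not all equal" hypothesis are all genuinely used, and where tracking the dependence of $C$ on $A, B, \vec\delta, r$ (but \emph{not} on $n$-specific numerics beyond these) is delicate. For the final displayed consequence \eqref{Tconclbalanced}, I would simply take $A = B = \frac1n J$; then $BA\vec e_m = \frac1n \vec 1$ so every $\theta_i = 1/n$, $B\vec e_m = \frac1n\vec 1 = (r/(nr),\dots,r/(nr))^T$ so every $q_i = nr$ (legitimately in $[r,\infty]$ with $\sum q_i^{-1} = n/(nr) = r^{-1}$), and $BA\vec\delta = \frac1n(\sum_i \delta_i)\vec 1$, so every $s_i = \sigma = \frac1n\sum_i \delta_i$, giving exactly \eqref{Tconclbalanced}. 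Strictly, $\frac1n J \notin DS^\circ(n)$ only fails to be an issue because its entries $1/n$ do lie in $(0,1)$ for $n \ge 2$; the degenerate case $n=1$ is vacuous since then there is no $i \neq m$ and the hypothesis on the $\delta_i$ is empty, so one may assume $n \ge 2$ throughout.
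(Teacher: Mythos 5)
Your first stage is essentially the paper's Lemma \ref{shuffling}: the vertex (permutation) estimates, Birkhoff's theorem, and geometric-mean/real interpolation (Lemma \ref{thetar}, reiteration, Lemma \ref{quasi}) do give, for every doubly stochastic $A$, a bound with all $n$ factors in $\ell^r_{s_i}(\widetilde X_{\theta_i,r})$, $\vec s=A\vec\delta$, $\vec\theta=A\vec e_m$. The genuine gap is exactly the step you yourself flag as ``the main obstacle'': passing from these all-$r$ estimates to mixed secondary exponents $q_i$. ``A second application of the same machinery'' cannot do this: taking convex combinations/geometric means of estimates each of which has secondary exponent $r$ in every slot and then invoking Lemma \ref{thetar} lands you again in $(\cdot,\cdot)_{\gamma,r}$-spaces, i.e.\ secondary exponent $r$; nothing in that machinery ever raises the outer exponent above $r$. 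The paper needs a separate mechanism (Lemma \ref{upgradelemma}, an iterated version of Christ's argument): for each slot $k=n,n-1,\dots,2$ one freezes the other arguments and interpolates the linear map $L_k$ of \eqref{Lk} by the $(\cdot,\cdot)_{\frac12,\infty}$ method between two perturbed matrices $A^\pm\in DS^\circ(n)$, in two sub-steps: first perturbing the $m$-th column so that $\theta_k^+\neq\theta_k^-$, which upgrades the inner space to $\overline X_{\theta_k,\infty}$; then perturbing two columns $i_1,i_2\neq m$ with $\delta_{i_1}\neq\delta_{i_2}$, which moves $s_k$ while keeping the entire $m$-th column (hence $\theta_k$) fixed, so that $(\ell^r_{s_k^+}(\overline X_{\theta_k,\infty}),\ell^r_{s_k^-}(\overline X_{\theta_k,\infty}))_{\frac12,\infty}=\ell^\infty_{s_k}(\overline X_{\theta_k,\infty})$. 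This is precisely where the hypothesis that the $\delta_i$ with $i\neq m$ are not all equal is used --- not, as you assert, to guarantee that the $\theta_i$ lie in $(0,1)$ (that is already ensured by $A\in DS^\circ(n)$). Your misattribution of this hypothesis is symptomatic of the missing idea.

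Once $n-1$ slots have been upgraded to $\ell^\infty_{s_i}(\overline X_{\theta_i,\infty})$, the general $q_i$ arise by convexifying in $B$: the vertex cases $B=P$ (permutation matrices) are exactly the upgraded estimates, one slot with exponent $r$ and the rest with $\infty$, matching $B\vec e_m=P\vec e_m$; and the convex combination in $B$ is carried out not by the geometric-mean/real-interpolation machinery but by the transference result Lemma \ref{transferlemma} (built on the complex-interpolation Lemma \ref{multilinonseq}), which is what legitimately interpolates the secondary exponents between $r$ and $\infty$ to produce the $q_i$ with $\sum_i q_i^{-1}=r^{-1}$. Your proposal contains neither the upgrade iteration nor this transference step, so the central difficulty is left unresolved. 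Your deduction of \eqref{Tconclbalanced} from \eqref{Tconcl} with $a_{ij}=b_{ij}=1/n$ is fine and agrees with the paper.
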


Here, and in what follows we write $\lc$ if the inequality involves an implicit constant. 
For the proof of our restriction estimates only the special case 
\eqref{Tconclbalanced} is used; it follows from \eqref{Tconcl} by choosing $a_{ij}=b_{ij}=1/n$ for all $i,j$.

\subsubsection*{Relevance for the adjoint restriction operator}
One would like to extend the  proof of the endpoint estimate for the adjoint restriction operator 
 in \cite{bos1} by using weighted Lorentz spaces, but there is the  immediate difficulty
  that the real interpolation spaces of weighted Lebesgue or 
Lorentz spaces may not be  weighted Lorentz spaces, and other scales of spaces 
have to be considered (cf. the papers by Freitag \cite{freitag} and Lizorkin 
  \cite{liz} on interpolation spaces of weighted $L^p$ spaces).

Let $X$  be a Lorentz space of functions on an interval $I$ (with  Lebesgue measure), and a positive measurable weight function $w$ on $I$. Let $\Omega[w,k]=\{t\in I: 2^k\le w(t)<2^{k+1}\}$. We define the block Lorentz 
space 
 $b^q_s(w,X)$ to be 
the space of measurable functions for which 
\Be\label{blockLorentzdef}
 \|f\|_{b^q_s(w,X)}:= \Big(\sum_{k\in \bbZ}
\big[ 2^{ks} \|\chi_{\Omega[w,k]} f\|_X\big]^q\Big)^{1/q}
\Ee
is finite.  These  spaces arise in  real 
interpolation of weighted Lorentz spaces with change of measure (see
\cite{akmnp}, \cite{akn}). 
We are not necessarily interested in the block Lorentz spaces per se,
but use them as a vehicle to prove our  result on $L^p(w)= b^p_{1/p}(w,L^p)$.

The connection with results on 
$\ell^q_s(X)$ spaces 
  is immediate, namely 
 $b^q_s(w,X)$ is a retract of $\ell^q_s(X)$:
Define 
$\imath:b^q_s(w,X)
\to \ell^q_s(X)$ 
by $[\imath(f)]_k=\chi_{\Omega[w,k]} f$ and $\vars:\ell^q_s(X)\to b^q_s(w,X)$ by  
$\vars (F)= \sum_k \chi_{\Omega[w,k]} F_k$ then $\imath$ and $\vars$ have operator norm $1$ 
 and $\vars\circ\imath$ is the identity operator on $b^q_s(w,X)$;
 moreover $[\imath\circ \vars (F)]_k = \chi_{\Omega[w,k]}F_k$.
If $L$ is a linear operator mapping $b^q_s(w,X)$ 
boundedly 
to a quasi-normed space $V$ then $L\circ \vars: \ell^q_s(X)\to V$ 
and if $\cL$ 
is a linear operator mapping $\ell^q_s(X)$  to $V$ then 
$\cL\circ\imath: b^q_s(w,X)\to V$. 
Analogous observations can be made for multilinear operators acting on 
products
 of such spaces. 
Thus Theorem \ref{interpolationtheorem} implies  an immediate analog for multilinear operators acting on 
$\prod b^{q_i}_{s_i}(w,\overline X_{\vartheta_i,q_i})$
which will be used in our estimates for adjoint restriction operators.

\subsubsection*{This paper.} In \S\ref{optimality} we discuss the optimality of affine arclength measure in estimates for the Fourier restriction operators associated with curves.  In \S\ref{multlinsymsect} we prove 
Theorem
\ref{interpolationtheorem}.
In  \S\ref{hyp} we formulate geometrical 
 hypotheses for our  main  result on Fourier restriction from which
Theorems \ref{powerthm} and \ref{polthm} can be derived.
This result  is proved in \S \ref{proofofmaintheorem}. 
Theorem \ref{powerthm}  is proved in \S\ref{powertheoremsect}. 
In \S\ref{simpletypethms} 
we make some observations on curves of 
simple type and prove Theorem \ref{polthm}. In \S\ref{Drury-estimate}
we give the proof of  the  partial result for general polynomial curves alluded to above.
Some background needed for the interpolation section is provided in
Appendix \ref{appendixinterpol}.

\section{Optimality of the affine arclength measure}\label{optimality}
Let $\tau(t)$ be as in \eqref{weight}. 
For $p>1$ 
  let $\sig(p)=\frac{2p'}{d^2+d}$, with $p'=\frac{p}{p-1}$
(the critical $\sigma$ for $L^p\to L^\sigma$  boundedness of Fourier restriction 
with respect to Lebesgue measure in the nondegenerate case).
 In particular
$\sigma(p_d)=p_d$ for $p_d=\frac{d^2+d+2}{ d^2+d}$.  
\begin{proposition} \label{optprop}
Let $I$ be an interval and $\gamma:I\to \bbR^d$ be of class $C^{d}$.
Let $\mu$ be a positive Borel measure on $I$
and suppose that  the inequality
\Be\label{critpqineq} 
\Big(\int_I \big|\widehat f\circ\gamma\big|^{\sigma(p)}  \,d\mu \Big)^{1/\sigma(p)}
\le B \|f\|_{L^{p,1}}
\Ee holds for all $f\in L^{p,1}(\bbR^d)$.

Then $\mu$ is absolutely continuous with respect to Lebesgue measure 
on $I$, so that $d\mu =\omega(t) dt$ for a nonnegative  locally integrable
 $\omega$, and   
there exists 
 a constant $C_d$   so that
\Be\label{ineqforomega}
\omega(t)\le C_d B^{\sigma(p)} |\tau(t)|^{\frac {2}{d^2+d}}
 \Ee
for almost every $t\in I$.
\end{proposition}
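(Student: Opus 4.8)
The plan is to test the hypothesis \eqref{critpqineq} against Knapp-type examples localized near a single point, at every small scale, and then run a differentiation argument. Fix $t_0\in I$ and for small $\delta>0$ put $J_\delta(t_0)=[t_0-\delta,t_0+\delta]$. Assume first that $\tau(t_0)\ne 0$, and let $M$ be the matrix with columns $\gamma'(t_0),\dots,\gamma^{(d)}(t_0)$, so $|\det M|=|\tau(t_0)|$ by \eqref{weight}. Taylor's theorem (valid since $\gamma\in C^d$) gives $\gamma(t_0+s)-\gamma(t_0)=M(s,s^2/2!,\dots,s^d/d!)^T+R(s)$ with $|R(s)|=o(s^d)$; hence, once $\delta$ is below a threshold depending on $t_0$, the anisotropic box $\mathcal N=\gamma(t_0)+M\big(\prod_{j=1}^d[-2\delta^{j},2\delta^{j}]\big)$ contains $\gamma(J_\delta(t_0))$ and $|\mathcal N|\sim|\tau(t_0)|\,\delta^{1+2+\cdots+d}=|\tau(t_0)|\,\delta^{d(d+1)/2}$. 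I then take $f$ with $\widehat f$ a fixed nonnegative Schwartz bump equal to $1$ on $\mathcal N$ and supported in a fixed dilate of $\mathcal N$; this $f$ is Schwartz, essentially $|\mathcal N|$ times a fixed Schwartz function dilated to be carried by the box dual to $\mathcal N$ (of volume $\sim|\mathcal N|^{-1}$), so the scaling identity $\|g(S\,\cdot)\|_{L^{p,1}}=|\det S|^{-1/p}\|g\|_{L^{p,1}}$ yields $\|f\|_{L^{p,1}}\lc|\mathcal N|^{1-1/p}=|\mathcal N|^{1/p'}$ with a constant depending only on $d$.

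Feeding this $f$ into \eqref{critpqineq}: since $\widehat f\ge 0$ and $\widehat f\circ\gamma\equiv1$ on $J_\delta(t_0)$, the left side is at least $\mu(J_\delta(t_0))^{1/\sigma(p)}$, so $\mu(J_\delta(t_0))^{1/\sigma(p)}\lc B\big(|\tau(t_0)|\,\delta^{d(d+1)/2}\big)^{1/p'}$. Now the exponents conspire: from $\sigma(p)=\tfrac{2p'}{d^2+d}$ one has $\sigma(p)/p'=\tfrac{2}{d^2+d}$ and $\tfrac{d(d+1)}{2}\cdot\tfrac{\sigma(p)}{p'}=1$, so raising to the power $\sigma(p)$ gives $\mu(J_\delta(t_0))\le C_d\,B^{\sigma(p)}\,|\tau(t_0)|^{2/(d^2+d)}\,\delta$ for all small $\delta$. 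In particular the upper derivate $\overline D\mu(t_0):=\limsup_{\delta\to0}\mu(J_\delta(t_0))/(2\delta)$ satisfies $\overline D\mu(t_0)\le C_d B^{\sigma(p)}|\tau(t_0)|^{2/(d^2+d)}$ at every $t_0$ with $\tau(t_0)\ne0$.

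At a point $t_0$ with $\tau(t_0)=0$ the matrix $M$ is singular and one must use a more degenerate box. Here I would choose greedily a maximal linearly independent subfamily $\gamma^{(j_1)}(t_0),\dots,\gamma^{(j_m)}(t_0)$ of $\{\gamma^{(j)}(t_0):1\le j\le d\}$ with $j_1<\dots<j_m$ (so $j_i\ge i$ and $m\le d-1$); the degree-$d$ Taylor polynomial of $\gamma$ at $t_0$ lies in the span $V_m$ of these vectors, confined there to a box of side lengths $\lc\delta^{j_1},\dots,\delta^{j_m}$, while Taylor's theorem confines $\gamma(J_\delta(t_0))$ to within $o(\delta^d)$ of $\gamma(t_0)+V_m$ in each of the $d-m\ge1$ transverse directions. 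Thus $\gamma(J_\delta(t_0))$ lies in a box of volume $\lc\delta^{j_1+\cdots+j_m}o(\delta^{d(d-m)})=o(\delta^{m(m+1)/2+d(d-m)})\le o(\delta^{d(d+1)/2})$, the last step because $m\mapsto m(m+1)/2+d(d-m)$ is nonincreasing on $\{0,\dots,d-1\}$ and equals $d(d+1)/2$ at $m=d-1$. Running the estimate of the first two paragraphs with this box gives $\mu(J_\delta(t_0))\lc B^{\sigma(p)}o(\delta)$ as $\delta\to0$, hence $\overline D\mu(t_0)=0$. Combining the two cases, $\overline D\mu(t_0)\le C_d B^{\sigma(p)}|\tau(t_0)|^{2/(d^2+d)}$ for \emph{every} $t_0\in I$; in particular $\overline D\mu<\infty$ everywhere. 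Since the upper derivate of the part of $\mu$ singular with respect to Lebesgue measure is $+\infty$ almost everywhere with respect to that singular part, $\mu$ must be absolutely continuous, $d\mu=\omega\,dt$; and at every Lebesgue point of $\omega$ (hence a.e.\ $t$) we get $\omega(t)=\lim_{\delta\to0}\mu(J_\delta(t))/(2\delta)\le C_d B^{\sigma(p)}|\tau(t)|^{2/(d^2+d)}$, which is \eqref{ineqforomega}.

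I expect the only genuinely delicate step to be the treatment of the degenerate set $\{\tau=0\}$: one must ensure that, even when the Knapp box collapses, the curve stays trapped in an anisotropic box of volume $o(\delta^{d(d+1)/2})$, so that $\overline D\mu$ vanishes there. A crude enclosing box of volume $\sim\delta^{2d-1}$ only suffices for $d\le2$, so the greedy choice of an independent subfamily of derivatives together with the precise $o(\delta^d)$ order of the Taylor remainder is really needed when $d\ge3$. The remaining ingredients — the $L^{p,1}$ scaling of the dual-box test function and the exponent bookkeeping — are routine.
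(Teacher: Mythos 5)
Your proof is correct and takes essentially the same route as the paper: Knapp-type test functions adapted to anisotropic Taylor boxes, with the same greedy choice of a maximal independent family of derivatives $\gamma^{(j_1)}(t_0),\dots,\gamma^{(j_m)}(t_0)$ at points where $\tau$ vanishes (giving box volume $o(\delta^{(d^2+d)/2})$ there and $\approx|\tau(t_0)|\delta^{(d^2+d)/2}$ at nondegenerate points), the exponent identity $\sigma(p)/p'=2/(d^2+d)$, and a differentiation argument. The only cosmetic difference is how absolute continuity is obtained: you use everywhere-finiteness of the upper derivate together with the fact that a nontrivial singular part has infinite derivate at almost every point with respect to itself, while the paper first shows $\mu(J)\le C(I')\,|J|$ on compact subintervals and then applies Lebesgue differentiation to $\omega$; both steps are standard and valid.
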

\begin{proof} 
We argue as in the proof of  Proposition 2 in 
\cite{obcurv} and use a \lq Knapp  example\rq \ to see that 
\eqref{critpqineq}  implies
\begin{equation}\label{e2} 
\int \chi_P(\gamma(t)) \, d\mu(t)  \le C_1(d) B^{\sigma(p)}  |P|^{\frac{2}{d^2 +d}} 
\end{equation}
for any  parallelepiped $P$. Indeed if $P=AQ+b$ where $Q=[0,1]^d$, 
 $b\in \bbR^d$ and $A$ is an invertible linear transformation then we 
 choose $f$ so that $\widehat f(\xi)= \exp(-|A^{-1}(\xi-b)|^2)$. Now 
$|\widehat f(\xi)|\ge  e^{-d} $ for $\xi\in P$, and  
$\|f\|_{L^{p,1}} \le C_2(d) |\det (A)|^{1/p'}$, 
 and then \eqref{e2} is an immediate 
consequence of  the relation $\sigma(p)/p'= 2/(d^2+d)$ and $|P|=|\det A|$.

We first show that $\mu$ is absolutely continuous with respect to Lebesgue measure. Let $I'$ be a compact subinterval of $I$. 
Absolute continuity follows if we can show that
$\mu(J)\le C(I')|J|$ for every subinterval $J$ of $I'$ with length $|J|<1/2$.
By the Radon-Nikodym theorem 
$d\mu=\omega(t) dt $ with locally integrable  $\omega$
(in fact $\omega$ will be  locally bounded by the estimate on $\mu(J)$).

 Fix such a $J\subset I'$ and let $t$ be the center of $J$, and let $|J|=2h$.
Consider the Taylor expansion
\begin{equation}\label{e3}
\gamma (t+u )=\sum_{j=0}^d \frac{u^j}{j!}\,\gamma^{(j)}(t)+o(u^d)\, .
\end{equation}
Let $K=K_t$ denote the  dimension of the linear span $V_t$  of 
$\gamma'(t),...,\gamma^{(d)}(t)$.
Choose $1\le j_1<\dots<j_K\le d$ so that the span of 
$\gamma^{(j_1)}(t), \dots, \gamma^{(j_K)}(t)$ is equal to $V_t$ and so that
for each $j=1,\dots d$ the vector 
$\gamma^{(j)}(t)$ belongs to $\text{span}(\{\gamma^{(j_k)}(t),  j_k \le j\})$.
Choose an orthonormal basis  $\{v_k(t) \}_{1\le k\le d}$  so that
$\text{span} (\{v_1(t),\dots, v_l(t)\})$ is equal to 
$\text{span}(\{\gamma^{(j_k)}(t),  k=1,\dots, l\})$, for $l=1,\dots, K$.

Then there is a constant $C$ (depending on $I'$ and the $C^d$ bounds of $\gamma$) so that $\gamma(s)$, $s\in J$,  belongs to the 
parallelepiped 
$$\cP_C(h,t)=\gamma(t)+\big\{ \sum_{j=1}^d C b_j v_j(t): 0\le b_j\le h^j\}$$
which has volume $O(h^{\frac{d^2+d}{2}})$. By 
\eqref{e2} we get $$\mu(J) \le C_{1}(d) |\cP_C(h,t)|^{\frac{2}{d^2+d}}\le 
C(d,I',\gamma) |h|$$  which shows the absolute continuity of $\mu$.

In order to obtain  
\eqref{ineqforomega} it suffices, by the Lebesgue differentiation theorem,
 to prove
\Be \label{limsup}
\limsup_{h\to 0+}
\frac{1}{h}\int_{0}^{h}\omega(t+u) du 
 \le C_d B^{\sigma(p)}|\tau(t)|^{\frac{2}{d^2+d}}\,
\Ee
for every 
$t$ in the interior of $I$. 
In what follows fix such a $t$ and consider the Taylor expansion \eqref{e3}.
We distinguish the cases $\tau(t)=0$ and $\tau(t)\neq 0$.

If  $\tau (t)=0$ then $K_t\le d-1$ and using the orthonormal basis above the Taylor expansion can be rewritten as
$$
\gamma (t+u )=\gamma(t)+ \sum_{l=1}^K (c_l(t) u^{j_l}+g_l(u,t)u^d)\,v_l(t)
+ 
u^d \sum_{l=K+1}^{d} g_l(u,t)  v_l(t)
$$
where 
$g_l(u,t)\to 0$ as $u\to 0$.
Let  $\rho(h,t)= \max_{K+1\le l\le d} \sup_{0\le u\le h}|g_l(u,t)|$ and 
\begin{multline*}
P(h,t,C)=\gamma (t)+\big\{\sum_{k=1}^d Cb_k v_k(t): \\
0\le b_k \le h^{j_k}, k=1,\dots, K;  |b_k |\le \rho(h,t) h^d, k=K+1,\dots,d \big\}.
\end{multline*}
If $C$ is sufficiently large then there is $h_0(t)>0$ so that 
$\gamma (t+u )\in P(h,t,C)$ whenever  
$h\le h_0(t)$ and  $0\le u\le h$. Also 
$|{P(h,t,C)}|\lc h^{(d^2 +d)/2}\rho(h,t)$. Thus 
\eqref{e2} yields 
 $$\int_0^{h}w(t+u )\, du \lc B^{\sigma(p)}
h \rho(h,t)^{\frac{2}{d^2+d}}=o(h)$$ and 
we have verified \eqref{limsup} for the case   $\tau(t)=0$.

If $\tau(t)\neq  0$ we  may replace the above orthonormal basis by the basis 
$\gamma'(t)$,...,$\gamma^{(d)}(t)$ to rewrite  the Taylor expansion \eqref{e3}  as
$$
\gamma (t+u )=\sum_{j=0}^d \frac{u^j + u^d e_j(u,t)}{j!}\,\gamma^{(j)}(t)\, 
$$
where $\lim_{u\to 0+} |e_j(u,t)|=0$.
Let
$$P(h,t):=\gamma (t)+
\big\{\sum_{j=1}^d \frac{b_j}{j!}\,\gamma^{(j)}(t): 
0\le b_j \le 2h^j \big\}\, .
$$
Then $|P(h,t)|= C_3(d) h^{\frac{d^2+d}{2}}|\tau(t)|$ with $C_3(d)=2^d\prod_{j=1}^d\frac{1}{j!}$ and there 
is $h_0(t)>0$  so that for $ h<h_0(t)$ we have 
$\gamma(t+u)\in P(h,t)$ for $0\le u\le h$.
Thus, by \eqref{e2}  we see that for $h\le h_0(t)$
$$\int_{0}^{h}\omega(t+u) du \,\le \,C_1(d) B^{\sigma(p)}  h \big(C_3(d)|\tau(t)|
\big)^{\frac{2}{d^2 +d}}$$
which yields \eqref{limsup} in the case $\tau(t)\neq 0$.
\end{proof}

\section{Interpolation of multilinear operators with symmetries}
\label{multlinsymsect}
We shall now prove several  lemmata involving real interpolation
of multilinear operators with symmetry that have values in an $r$-convex quasi-normed space $V$. These will  lead  to the proof of Theorem
\ref{interpolationtheorem}. The reader may  consult  Appendix \ref{appendixinterpol} for some 
results from interpolation theory needed here.

The following notation, for a couple $\overline X=(X_0, X_1)$ of compatible quasinormed spaces, will be convenient. Set, for $0<q\le\infty$, 
\begin{equation}\label{widetildeconvention}
\widetilde X_{\theta,q}=\begin{cases} X_0, &\text{ if } \theta=0,
\\
\overline X_{\theta,q},
&\text{ if } 0<\theta<1,
\\
X_1, &\text{ if } \theta=1.
\end{cases}
\end{equation}
With this notation we formulate a version of Lemma \ref{thetar} for operators with symmetry.  

\begin{lemma}\label{shuffling}
Suppose $r\le 1$, and $\delta_1,\dots, \delta_n\in \bbR$. Let $(X_0,X_1)$ be a couple of compatible complete quasi-normed spaces.
Let $T$ be a multilinear operator defined on $n$-tuples of $X_0+X_1$ valued sequences, with values in an $r$-convex  space $V$
and suppose that  for every permutation $\vpi$ on $n$ letters we have the
inequality
\begin{equation}\label{endpthypothesis}
\|T(f_{\vpi(1)},\dots, f_{\vpi(n)})\|_{V} \le 
 \|f_1\|_{\ell_{\delta_1}^r(X_1)}
\prod_{i=2}^n \|f_i\|_{\ell_{\delta_i}^r(X_0)}\,.
\end{equation}

Then  there is a constant $C$ such  that 
for every doubly stochastic matrix $A=(a_{ij})_{i,j=1,\dots n}$,
for $s_i=\sum_{j=1}^n a_{ij}\delta_j$, $\theta_i=a_{i,1}$, $i=1,\dots,n$  and every permutation
$\vpi$, 
\begin{equation}\label{shufflingconcl}
\|T(f_{\vpi(1)},\dots, f_{\vpi(n)})\|_{V} \le C
\prod_{i=1}^n \|f_i\|_{\ell_{s_i}^r(\widetilde X_{\theta_i,r})}\,.
\end{equation}
\end{lemma}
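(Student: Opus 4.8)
The idea is to reduce the multilinear statement with symmetry to a chain of bilinear (two-variable) interpolation steps, each of which moves one unit of "mass" from the endpoint configuration in \eqref{endpthypothesis} toward the diagonal configuration, while carefully tracking which coordinate sits in $X_1$ versus $X_0$ and which $\ell^r_s$-weight it carries. Since $A$ is doubly stochastic, by Birkhoff's theorem $A=\sum_{\vpi} \lambda_\vpi P_\vpi$ is a convex combination of permutation matrices; the plan is to realize the row data $(s_i,\theta_i)=(\sum_j a_{ij}\delta_j,\ a_{i1})$ as an iterated average, and to interpolate one pair of convex-combination weights at a time.

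**Key steps, in order.** First I would record the "shuffled endpoints": applying \eqref{endpthypothesis} with the permutation $\vpi$ shows that for each $k$, $T$ maps $\ell^r_{\delta_{\vpi^{-1}(k)}}(X_1)$ in the $k$-th slot and $\ell^r_{\delta_{\vpi^{-1}(i)}}(X_0)$ in the other slots, boundedly into $V$; equivalently, thinking of the input as an ordered $n$-tuple $(f_1,\dots,f_n)$ and letting $\vpi$ range, we get that for each choice of a distinguished index $m_0\in\{1,\dots,n\}$ and each assignment of the $\delta$'s to the slots via a permutation, the "one $X_1$, rest $X_0$" estimate holds. Second, I would invoke the bilinear real interpolation lemma alluded to as "Lemma \ref{thetar}" (the non-symmetric precursor): given boundedness on $\ell^r_{a}(\widetilde X_{\alpha,r})\times(\text{rest})$ and on $\ell^r_{b}(\widetilde X_{\beta,r})\times(\text{rest})$ in a fixed slot, one obtains boundedness on $\ell^r_{\lambda a+(1-\lambda)b}(\widetilde X_{\lambda\alpha+(1-\lambda)\beta,\,r})$ in that slot, for $0<\lambda<1$, with the other slots held fixed — this is the standard "diagonal" real interpolation with change of the $\ell^r_s$ exponent, combined with the reiteration/stability of $\overline X_{\theta,r}$ under $(\cdot)_{\lambda,r}$, using $r$-convexity of $V$ to sum the dyadic pieces. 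Third, the combinatorial heart: I would run an induction on $n$, or more cleanly, peel off the convex decomposition $A=\sum_\vpi\lambda_\vpi P_\vpi$ one permutation at a time. Concretely, write $A = \lambda_1 P_{\vpi_1}+(1-\lambda_1)A'$ with $A'$ still doubly stochastic; by the inductive hypothesis applied to $A'$ (after symmetrizing over the remaining permutations via \eqref{endpthypothesis}) one gets boundedness into $V$ on $\prod_i \ell^r_{s'_i}(\widetilde X_{\theta'_i,r})$ with $(s'_i,\theta'_i)$ the rows of $A'\vec\delta$ and $A'\vec e_1$; the permutation-endpoint hypothesis gives boundedness on $\prod_i\ell^r_{\delta_{\vpi_1^{-1}(i)}}(\widetilde X_{[\vpi_1^{-1}(i)=1]\,\cdot\,1,\,r})$; then interpolate these two multilinear bounds with parameter $\lambda_1$, slot by slot, using the bilinear lemma $n$ times (once per coordinate) to land exactly on the row data of $\lambda_1 P_{\vpi_1}\vec\delta+(1-\lambda_1)A'\vec\delta = A\vec\delta$ and $\lambda_1 P_{\vpi_1}\vec e_1+(1-\lambda_1)A'\vec e_1 = A\vec e_1$, i.e. $s_i=\sum_j a_{ij}\delta_j$ and $\theta_i=a_{i1}$. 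Finally, since the conclusion is claimed for every permutation $\vpi$ of the output as well, I would note it suffices to prove it for the identity permutation and then re-apply with $A$ replaced by $AP_\vpi$ (which is again doubly stochastic), or simply observe the argument is permutation-covariant.

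**Main obstacle.** The delicate point is step three: interpolating a \emph{multilinear} operator in one slot at a time while the other slots carry different spaces on the two sides of the interpolation. One must be careful that the "frozen" slots on the two endpoints are genuinely the same space (so that the multilinear-to-bilinear reduction via vector-valued arguments is legitimate), which forces the peeling to be organized so that at each stage one interpolates between the \emph{current accumulated estimate} and a \emph{single shuffled endpoint}, never between two generic estimates. The bookkeeping of the weights $s_i$ — ensuring the convex combination of $\ell^r_s$-exponents is handled by the same interpolation functor that handles the $\overline X_{\theta,r}$ — is routine once one has the right lemma, but getting the indices to match the doubly stochastic rows exactly is where care is needed; $r$-convexity of $V$ is what makes the dyadic summation in each bilinear step legitimate, and the completeness of $X_0,X_1$ guarantees the $J$-method representations converge. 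A secondary technical nuisance is the boundary cases $\theta_i\in\{0,1\}$, handled uniformly by the $\widetilde X_{\theta,q}$ convention in \eqref{widetildeconvention}.
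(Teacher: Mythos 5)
Your overall skeleton (reduce to the identity permutation, note that \eqref{endpthypothesis} is exactly the statement for the permutation matrices, invoke Birkhoff's theorem, and establish a convexity property allowing passage from two doubly stochastic matrices to their convex combination) is the same as the paper's. The gap is in how you verify the convexity/peeling step. You propose to interpolate between the accumulated estimate (for $A'$) and a single shuffled endpoint (for $P_{\vpi_1}$) ``slot by slot, using the bilinear lemma $n$ times, with the other slots held fixed.'' But one-variable real interpolation of a multilinear operator requires the two estimates being interpolated to agree in every slot except the one being varied, and in your scheme they do not: slot $i$ carries $\ell^r_{s_i'}(\widetilde X_{\theta_i',r})$ in the accumulated estimate and $\ell^r_{\delta_{\vpi_1(i)}}(\widetilde X_{\cdot,r})$ (an endpoint space) in the other, for every $i$ simultaneously. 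You name this obstacle yourself, but the remedy you offer (always interpolate against a single endpoint rather than between two generic estimates) does not remove it, since the frozen slots still differ between the two inequalities. As written, the central step of the proof does not go through.

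The paper sidesteps this entirely: given $\cH(A^{+})$ and $\cH(A^{-})$, it first takes the generalized geometric mean of the two multilinear inequalities, which is a trivially valid single estimate of the form $\|T(f_1,\dots,f_n)\|_V\le \cC(A^+)^{1-\gamma}\cC(A^-)^{\gamma}\prod_i\|f_i\|_{W_{i,0}}^{1-\gamma}\|f_i\|_{W_{i,1}}^{\gamma}$, and then applies Lemma \ref{thetar} --- which is not a one-slot bilinear lemma but a ``means'' lemma converting exactly such a product-of-geometric-means bound into boundedness on $\prod_i(W_{i,0},W_{i,1})_{\gamma,r}$ \emph{simultaneously in all slots}, using the $r$-convexity of $V$ and the $J$-method. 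The identification of the resulting spaces is then done by the reiteration theorem together with the embedding of Lemma \ref{quasi}, $\ell^r_{(1-\gamma)s_i^{+}+\gamma s_i^{-}}(\widetilde X_{(1-\gamma)\theta_i^{+}+\gamma\theta_i^{-},r})\hookrightarrow (W_{i,0},W_{i,1})_{\gamma,r}$, which lands precisely on the row data of $(1-\gamma)A^{+}+\gamma A^{-}$. If you replace your slot-by-slot step with this geometric-mean argument (and cite Lemma \ref{thetar} in its actual form), your proof becomes essentially the paper's; without it, the iteration cannot be carried out.
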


\begin{proof}
Because of the permutation invariance of the assumption it suffices to prove
\eqref{endpthypothesis}  for $\vpi=\text{id}$.

The assumption says that 
$\|T[g_1,\dots, g_n]\|_V$
is dominated by $\|g_{\vpi^{-1}(1)}\|_{\ell^r_{\delta_1}(X_1)}$ $\times$
$\prod_{k=2}^n\|g_{\vpi^{-1}(k)}\|_{\ell^r_{\delta_k}(X_0)}$.
This can be rewritten as 
\Be\label{assumprewritten}
\begin{aligned} 
\|T[g_1,\dots, g_n]\|_V &\le 
\prod_{i=1}^n\|g_i\|_{\ell^r_{\delta_{\vpi(i)}}(\widetilde X_{\theta_i,r})}
\\ &\text{where $\theta_i=1$  if $\vpi(i)=1$ and $\theta_i=0$ if $\vpi(i)\neq 1$; }
\end{aligned}
\Ee
recall that by definition $\widetilde X_{0,r}=X_0$ and $\widetilde X_{1,r}=X_1$.
Let $P_\vpi$ be the permutation matrix which has
$1$  in the 
positions $(i,\vpi(i))$, $i=1,\dots, n$, and $0$ in the other positions. 
Then the conditions  $s_i= \delta_{\vpi(i)}$ and $\theta_i$ is as in \eqref{assumprewritten} can be rewritten as  $\vec s= P_\vpi \vec\delta$ and 
$\vec \theta= P_\vpi \vec e_1$ (here the vectors are all understood as columns).

For a doubly stochastic matrix $A$  let $\cH(A)$ be the statement
that the conclusion
$$
\|T(f_{1},\dots, f_{n})\|_{V} \le \cC(A)
\prod_{i=1}^n \|f_i\|_{\ell_{s_i}^r(\widetilde X_{\th_i,r})}
$$ holds for the vectors
$\vec s= A\vec\delta$, $\vec \th= A\vec  e_1$.
Now \eqref{assumprewritten}  is just saying 
that the statement $\cH(P_\vpi)$ holds.
By Birkhoff's theorem every $A\in DS(n)$ is a convex combination of permutation matrices and therefore  the general statement in
\eqref{shufflingconcl} follows immediately from repeated applications 
of a  {\it convexity property:}  Namely,
  if $\cH(A^{+})$ and $\cH(A^{-})$ hold for two doubly stochastic matrices $A^{+}$
 and $A^{-}$ then the statement $\cH((1-\gamma)A^{+}+\gamma A^{-})$ holds for $0< \gamma< 1$.

We now verify  this convexity property.
Let $A^{+}$ and $A^{-}$ be doubly stochastic matrices for which
$\cH(A^{+})$ and $\cH(A^{-})$ hold,
thus we have
\begin{align*}
\|S(f_{1},\dots, f_{n})\|_{V} &\le
\cC(A^{+})
\prod_{i=1}^n \|f_i\|_{\ell_{s_{i}^{+}}^r(\widetilde X_{\th_{i}^{+},r})}
\\
\|S(f_{1},\dots, f_{n})\|_{V} &\le
\cC(A^{-})
\prod_{i=1}^n \|f_i\|_{\ell_{s_{i}^{-}}^r(\widetilde X_{\th_{i}^{-},r})}
\end{align*}
for column vectors  $\vec s^{\pm}=A^\pm \vec\delta$, $\vec \th^{\pm}= 
A^{\pm} \vec e_1$.
By  taking generalized geometric means we also have
$$\|S(f_1,\dots, f_n)\|_{V}
\le \cC(A^{+})^{1-\gamma} \cC(A^{-})^{\gamma}
\prod_{i=1}^n \big(\|f_i\|_{\ell_{s_{i}^{+}}^r(\widetilde X_{\th_{i}^{+},r})}^{1-\gamma}
\|f_i\|_{\ell_{s_{i}^{-}}^r(\widetilde X_{\th_{i}^{-},r})}^\gamma\big)\,
$$
for $0<\gamma<1$.

Now let temporarily $W_{i,0}= \ell_{s_{i}^{+}}^r(\widetilde X_{\th_{i}^{+},r})$,
and $W_{i,1}= \ell_{s_{i}^{-}}^r(\widetilde X_{\th_{i}^{-},r})$.
By the last displayed formula and  Lemma \ref{thetar} we get
$$\|S(f_1,\dots, f_n)\|_{V} \le C\,\cC(A^{+})^{1-\gamma} \cC(A^{-})^{\gamma}
\prod_{i=1}^n \|f_i\|_{(W_{i,0},W_{i,1})_{\gamma,r}}
\,.$$
By the reiteration theorem
we have
$$\big(\widetilde X_{\th_{i}^{+},r}, \widetilde X_{\th_{i}^{-},r}\big)_{\gamma,r}=
\widetilde X_{(1-\gamma)\th_i^{+}+\gamma\th_i^{-},r}, \quad 0<\gamma<1\,,
$$
and then, by Lemma \ref{quasi} there is the continuous embedding
$$
\ell^r_{(1-\gamma) s_i^{+}+\gamma s_i^{-}} (\widetilde X_{(1-\gamma)\th_i^{+}+
\gamma\th_i^{-},r})
\hookrightarrow  (W_{i,0},W_{i,1})_{\gamma,r}\,.
$$
Hence, for some $\cC$
\begin{equation}\label{interpolconclusion}
\|S(f_1,\dots, f_n)\|_{V} \le \cC
\prod_{i=1}^n \|f_i\|_{\ell^r_{(1-\gamma) s_i^{+}+\gamma s_i^{-}}
(\widetilde X_{(1-\gamma)\th_i^{+}+\gamma\th_i^{-},r}) }\,.
\end{equation}
Let $A^{(\gamma)}= (1-\gamma) A^{+}+\gamma A^{-}$ then
$(1-\gamma) s_i^{+}+\gamma s_i^{-}= \sum_{j=1}^n a_{ij}^{(\gamma)} \delta_j$
and
$(1-\gamma) \th_i^{+}+\gamma \th_i^{-}=  a_{i1}^{(\gamma)}$ and thus
\eqref{interpolconclusion} is just 
$\cH((1-\gamma) A^{+}+\gamma A^{-})$.
\end{proof}

We shall now apply an iterated  version of the interpolation method by 
Christ \cite{Ch}  to upgrade $n-1$ of the $n$ spaces
$\ell_{s_{i}}^r(\widetilde X_{\th_{i},r}) $ to
$\ell_{s_{i}}^\infty(\widetilde X_{\th_{i},\infty}) $, provided that the parameters
correspond to doubly stochastic matrices in the
{\it interior} of the Birkhoff polytope,
the set 
 $DS^\circ(n)$ 
 of doubly stochastic $n\times n$  matrices
$A=(a_{ij})$ for which all entries lie in the open interval $(0,1)$.
In the following lemma we  assume the conclusion of the previous lemma and also an additional assumption on $\vec\delta$.

\begin{lemma}\label{upgradelemma}
Suppose $n\ge 3$, $0<r\le 1$, and $\delta_1,\dots, \delta_n\in \bbR$.
Assume that there are two indices $i_1,i_2$ with $2\le i_1<i_2\le n$ so that
$\delta_{i_1}\neq\delta_{i_2}$. Let $X_0$, $X_1$ be compatible, 
complete quasi-normed spaces and let $T$ be an $n$-linear operator defined on 
$n$-tuples of
$X_0+X_1$-valued sequences,  with values  in an $r$-convex space $V$.
Suppose that for every $A\in DS^\circ(n)$ there is $C(A)$ such that
\begin{equation}\label{shufflingassump}
\|T(f_1,\dots, f_n))\|_{V} \le C (A)
\prod_{i=1}^n \|f_i\|_{\ell_{\sigma_i}^r(X_{\mu_i,r})}
\end{equation}
whenever  $\vec \sigma=A\vec\delta$, $\vec\mu=A \vec e_1$.

Then  for every $A\in DS^\circ(n)$ there is $\widetilde C(A)$ such  that
\begin{equation}\label{interpolupgrade}
\|T(f_1,\dots,f_n)\|_{V} \le \widetilde C(A)
\|f_1\|_{\ell_{s_1}^r(\overline X_{\th_1,r})}
\prod_{i=2}^n \|f_i\|_{\ell_{s_i}^\infty(\overline X_{\th_i,\infty})}
\end{equation}
with $\vec s=A\vec\delta$, $\vec\theta=A\vec e_1$.
\end{lemma}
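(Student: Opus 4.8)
The plan is to upgrade the sequence spaces one factor at a time, using Christ's interpolation trick in the form that for an $r$-convex target $V$, a multilinear bound of the shape $\|T(\dots)\|_V \lc \prod \|f_i\|_{\ell^r_{\sigma_i}(X_{\mu_i,r})}$ that holds \emph{uniformly} over a family of exponent tuples $(\vec\sigma,\vec\mu)$ parametrized by an open set of doubly stochastic matrices can be ``summed'' (via a dyadic decomposition of each $f_i$ into its level sets $\{2^k \le \|f_{i,j}\|_{X}/\dots\}$, or rather the atomic/$J$-method decomposition of $\overline X_{\th_i,\infty}$) to yield the same bound with $\ell^\infty$ in place of $\ell^r$ on selected factors and the closure $\overline X_{\th_i,\infty}$ (or $\overline X_{\th_i,\infty}^0$, then extended by density considerations) in place of $\overline X_{\th_i,r}$. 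The key point enabling the summation is that we have room to wiggle $A$ inside $DS^\circ(n)$: to upgrade the $i$-th factor from $\ell^r$ to $\ell^\infty$ we pick two nearby matrices $A^\pm \in DS^\circ(n)$ differing only in rows $i_1,i_2$ (whose existence uses $\delta_{i_1}\ne\delta_{i_2}$, so that the corresponding $\sigma$-values genuinely split and create the geometric decay needed to sum a geometric series), apply the hypothesis \eqref{shufflingassump} at $A^+$ and $A^-$, and interpolate.

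Concretely, I would proceed by induction on the number $\ell$ of factors already upgraded, $\ell = 0,1,\dots,n-1$, keeping the first factor in its $\ell^r(\overline X_{\th_1,r})$ form throughout. The base case $\ell=0$ is exactly \eqref{shufflingassump}. For the inductive step, suppose factors $2,\dots,\ell+1$ have been upgraded to $\ell^\infty(\overline X_{\cdot,\infty})$ and we wish to upgrade factor $i_0$ with $\ell+2 \le i_0 \le n$. Fix the target matrix $A\in DS^\circ(n)$ and its associated $\vec s = A\vec\delta$, $\vec\theta = A\vec e_1$. Since $A$ is interior, and since there exist two coordinates $i_1 < i_2$ (both $\ge 2$; if $i_0$ is one of them replace it by the other available index, using $n\ge 3$ and the hypothesis that $\delta_{i_1}\ne\delta_{i_2}$) with $\delta_{i_1}\ne\delta_{i_2}$, I can choose a small $\eps>0$ and matrices $A^\pm = A \pm \eps E$, where $E$ is a fixed ``rotation'' supported in the $2\times 2$ minor on rows/columns $\{i_1,i_2\}$ with zero row and column sums, so that $A^\pm \in DS^\circ(n)$ still. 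Then $\vec s^\pm = \vec s \pm \eps \vec\eta$ where $\eta_{i_1} = -\eta_{i_2} = (\delta_{i_1}-\delta_{i_2})\ne 0$ and all other components of $\vec\eta$ vanish; in particular $s_{i_0}^\pm = s_{i_0}$ and $\theta_j^\pm = \theta_j$ for all $j$ (the $e_1$-column is untouched since $1\notin\{i_1,i_2\}$). Applying the inductive hypothesis at $A^+$ and $A^-$ (each of which, being in $DS^\circ(n)$, supports the $\ell$-upgraded bound), I get two estimates that agree in every factor except $i_1$ and $i_2$, where the summability exponent $s$ differs by $\pm\eps(\delta_{i_1}-\delta_{i_2})$.

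The heart of the argument is then the standard Christ summation: decompose $f_{i_0}$ dyadically with respect to the $\overline X_{\th_{i_0},\infty}$-norm structure — that is, write $f_{i_0} = \sum_{\nu} f_{i_0}^\nu$ where $f_{i_0}^\nu$ carries the piece of each sequence entry whose $\overline X_{\th_{i_0},r}$-to-$\overline X_{\th_{i_0},\infty}$ ``size ratio'' is $\sim 2^\nu$ — multilinearity gives $T(f_1,\dots,f_n) = \sum_\nu T(\dots,f_{i_0}^\nu,\dots)$, and $r$-convexity of $V$ lets us bound $\|T(f_1,\dots,f_n)\|_V \le (\sum_\nu \|T(\dots,f_{i_0}^\nu,\dots)\|_V^r)^{1/r}$. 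For each $\nu$ we now have the \emph{choice} of applying the $A^+$ estimate or the $A^-$ estimate to that term; one of the two choices gains a factor $2^{-c|\nu|}$ for a fixed $c = c(\eps,\delta_{i_1},\delta_{i_2}) > 0$ coming from the mismatch between the $\ell^r_{s_{i_0}^\pm}$ weight and the genuine $\ell^\infty_{s_{i_0}}$-with-$\overline X_\infty$ size of $f_{i_0}^\nu$ (this is exactly where the interpolation inequality between $\overline X_{\th,r}$ and $\overline X_{\th,\infty}$ — the analog of the elementary estimate that $K$-functional decay controls the $\ell^r$-sum — is invoked, together with the two nearby exponents). Summing the geometric series in $\nu$ converts the $\ell^r_{s_{i_0}}(\overline X_{\th_{i_0},r})$ norm of $f_{i_0}$ into its $\ell^\infty_{s_{i_0}}(\overline X_{\th_{i_0},\infty})$ norm, up to a constant depending on $A$ (through $\eps$ and the distance of $A$ to the boundary), while the other factors are unaffected. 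This completes the inductive step; after $n-1$ steps we arrive at \eqref{interpolupgrade}. The main obstacle I anticipate is purely bookkeeping: organizing the simultaneous perturbation so that exactly the one factor being upgraded has unchanged exponents while two \emph{other} factors absorb the perturbation (and verifying $A^\pm$ stays in $DS^\circ(n)$), and making the ``one of the two choices decays'' dichotomy precise — i.e. checking that for \emph{every} $\nu$, at least one of $s_{i_0}^+, s_{i_0}^-$ pairs with the dyadic size of $f_{i_0}^\nu$ to produce decay, which is a one-line case split on the sign of $\nu$ but must be stated with the correct inequalities. I expect this to go through essentially as in \cite{bos1} and \cite{Ch}, with the $\ell^p_s(X)$-valued framework adding only notational overhead over the scalar case.
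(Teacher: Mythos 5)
There is a genuine gap, and it sits exactly at the step you call ``the heart of the argument.'' In your construction the perturbation matrix $E$ is supported in the rows \emph{and} columns $\{i_1,i_2\}$, so $\vec s^{\pm}=A^{\pm}\vec\delta$ differs from $\vec s$ only in the components $s_{i_1},s_{i_2}$, and — as you yourself note — $s_{i_0}^{\pm}=s_{i_0}$ and $\vec\theta^{\pm}=\vec\theta$. But then the two estimates at $A^{+}$ and $A^{-}$ treat the factor $f_{i_0}$ in \emph{identical} norms $\ell^r_{s_{i_0}}(\overline X_{\theta_{i_0},r})$: when you decompose $f_{i_0}=\sum_\nu f_{i_0}^{\nu}$ and choose per piece which of the two bounds to apply, there is no mismatch at the $i_0$ slot and hence no factor $2^{-c|\nu|}$; the series does not sum and you simply recover the original $\ell^r_{s_{i_0}}(\overline X_{\theta_{i_0},r})$ bound. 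The Christ-type gain requires $s_{i_0}^{+}\neq s_{i_0}^{-}$, i.e. the perturbation must live in \emph{row} $i_0$ (in columns $i_1,i_2$, which is where $\delta_{i_1}\neq\delta_{i_2}$ enters), with a compensating perturbation in another row — the paper puts it in row $1$, the factor that stays at $\ell^r$, and then repairs the resulting $\pm$ mismatch in that factor by taking geometric means and invoking the elementary means lemma (Lemma \ref{thetar}) together with reiteration. Equivalently, what is really being used is the identity $\big(\ell^r_{s_{i_0}^{+}}(\overline X_{\theta_{i_0},\infty}),\ell^r_{s_{i_0}^{-}}(\overline X_{\theta_{i_0},\infty})\big)_{\frac12,\infty}=\ell^\infty_{s_{i_0}}(\overline X_{\theta_{i_0},\infty})$, which is vacuous when $s_{i_0}^{+}=s_{i_0}^{-}$.

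A second, independent omission: your scheme never upgrades the inner space from $\overline X_{\theta_{i_0},r}$ to $\overline X_{\theta_{i_0},\infty}$. Since your $A^{\pm}$ leave the first column of $A$ untouched, all $\theta_j$ are frozen, and no dyadic decomposition of $f_{i_0}$ can manufacture the $\infty$-index interpolation space out of two estimates with the same $\theta_{i_0}$ and the same $s_{i_0}$. The paper handles this by a separate preliminary step: perturb the entries $(1,1),(1,k),(k,1),(k,k)$ so that $\theta_k^{\pm}=\theta_k\mp\eps$ differ, then use $\overline X_{\theta_k,\infty}=(\overline X_{\theta_k^{+},r},\overline X_{\theta_k^{-},r})_{\frac12,\infty}$ (reiteration) together with the embedding $\ell^r_{s_k}(\overline X_{\theta_k,\infty})\hookrightarrow(\ell^r_{s_k^{+}}(\overline X_{\theta_k^{+},r}),\ell^r_{s_k^{-}}(\overline X_{\theta_k^{-},r}))_{\frac12,\infty}$ (Lemma \ref{quasi}), and only afterwards performs the outer upgrade $\ell^r_{s_k}\to\ell^\infty_{s_k}$ by the second perturbation described above. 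So two distinct perturbations per factor are needed — one moving the first column (to split the $\theta$'s), one moving columns $i_1,i_2$ in rows $1$ and $k$ (to split the $s$ of the factor being upgraded) — and your single perturbation accomplishes neither. Your overall inductive framework (upgrade one factor at a time, keep factor $1$ at $\ell^r$, exploit interiority of $A$ and $\delta_{i_1}\neq\delta_{i_2}$) is the right skeleton, but as written the decisive decay mechanism is not there.
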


\begin{proof} Let $A\in DS^\circ(n)$, and let
$2\le k\le n+1$.
Let $\cH_{n+1}(A)$ denote the statement that
\eqref{shufflingassump} is true for
$\vec \sigma=A\vec\delta$, $\vec\mu=A \vec e_1$.

Let $\cH_{n+1}$ denote the hypothesis \eqref{shufflingassump} 
for all $A\in DS^\circ(n)$. 
For $2\le k\le n$ let $\cH_k(A)$
denote the statement that there is  $C$ depending on $A$ so that  the inequality
\begin{equation}\label{interpolk}
\|T(f_{1},\dots, f_{n})\|_{V} \le C
\Big(\prod_{i=1}^{k-1}\|f_j\|_{\ell_{s_i}^r(\overline X_{\th_i,r})}\Big)
\Big(\prod_{i=k}^n \|f_i\|_{\ell_{s_i}^\infty(\overline X_{\th_i,\infty})}\Big)
\end{equation}
holds for all
$(f_1,\dots, f_{k-1})
\in \prod_{i=1}^{k-1}\ell_{s_i}^{r}(\overline X_{\theta_i,r})$,
$(f_k,\dots, f_n)\in  \prod_{i=k}^{n}\ell_{s_i}^{\infty}(\overline X_{\theta_i,\infty})
$, under the condition  
$\vec s=A\vec\delta$, $\vec\theta=A \vec e_1$.
Let $\cH_k$ denote the statement that  $\cH_k(A)$ holds for all
$A\in DS^\circ(n)$.

We seek to prove $\cH_2$.
In what follows we thus need to  show for $2\le k\le n$, $A\in DS^\circ(n)$  
that
 $\cH_{k+1}$
implies $\cH_k(A)$.
We assume in our writeup that $k\le n-1$
but the proof carries through to cover the initial  case  $k=n$
if we interpret $\prod_{i=n+1}^n \dots$ as $1$.

Assuming $\cH_{k+1}$ 
we shall first prove  a preliminary inequality $\cH^{\prel}_{k}(A)$, namely
\begin{multline}\label{interpolkintermed}
\|T(f_{1},\dots, f_{n})\|_{V} \le \\C
\Big(\prod_{i=1}^{k-1}\|f_j\|_{\ell_{s_i}^r(\overline X_{\th_i,r})}\Big)
\|f_k\|_{\ell_{s_k}^r(\widetilde X_{\th_k,\infty})}
\Big(\prod_{i=k+1}^n \|f_i\|_{\ell_{s_i}^\infty(\overline X_{\th_i,\infty})}\Big).
\end{multline}
We denote by
$\cH^{\prel}_{k}$ the statement that $\cH^{\prel}_{k}(A)$ holds for every
$A\in DS^\circ(n)$.

\subsubsection*{Proof that $\cH_{k+1}$  implies $\cH_k^\prel$}
Fix $A\in DS^\circ(n)$ and let  $\eps>0$ with the property that all entries of $A$ lie in the open interval $(2\eps, 1-2\eps)$. We define two $n\times n$ matrices $A^+=A^+(k)$ and $A^-=A^{-}(k)$   by letting
\begin{equation}
a^\pm_{\mu\nu}=\begin{cases}
a_{\mu\nu} &\text{ if } (\mu,\nu) \notin \big\{(1,1), (1,k), (k,1), (k,k)\big\}
\\
a_{\mu\nu}\pm \eps &\text{ if } (\mu,\nu)=(1,1) \quad\text{or} \quad
 (\mu,\nu)=(k,k)\,,
\\
a_{\mu\nu}\mp \eps &\text{ if } (\mu,\nu)=(1,k) \quad\text{or} \quad
 (\mu,\nu)=(k,1)\,.
\end{cases}
\end{equation}
It is easy to see that $A^\pm$ belong to $DS^\circ(n)$.
Also if $\vec s=A\vec\delta$, $\vec\theta=A\vec e_1$,
and
$\vec s^\pm=A^\pm\vec\delta$, $\vec\theta^\pm=A^\pm\vec e_1$,
then
$s^\pm_i=s_i$ if $i\notin \{1,k\}$,
$s^\pm_1=s_1\pm\eps(\delta_1-\delta_k)$,
$s^\pm_k=s_k\pm\eps(\delta_k-\delta_1)$,
moreover $\th_i=\theta^\pm_i$ if
$i\notin \{1,k\}$,
$\th^\pm_1=\th_1\pm\eps$, $\th^\pm_k=\th_k\mp\eps$.

We interpolate the linear operator $L_k$ given by
\Be\label{Lk}g\mapsto L_kg=T(f_1,\dots, f_{k-1}, g, f_{k+1},\dots, f_n)\Ee
using the real interpolation $K_{\vth,q}$  method with parameters $\vth=1/2$ and $q=\infty$. Since  $\th_k^+\neq \th_k^{-}$  we have
by Lemma \ref{quasi} and the reiteration theorem
$$
\ell^r_{s_k}(\overline X_{\th_k,\infty})=
\ell^r_{s_k}\big((\overline  X_{\th_k^+,r}, \overline X_{\th_k^-,r})_{\frac{1}2,\infty}\big) 
\hookrightarrow 
\big( \ell^r_{s_k^+}(\overline X_{\th_k^+,r}),
\ell^r_{s_k^-}(\overline X_{\th_k^-,r})\big)_{\frac 12,\infty}\,.
$$
Thus we obtain by interpolation of $L_k$ (using $\cH_{k+1}(A^\pm)$)
\begin{multline*}
\|T(f_{1},\dots, f_n)\|_{V} \le C
\prod_{i=1}^{k-1}\Big(\|f_j\|_{\ell_{s_i^+}^r(\overline X_{\th_i^+,r})}^{1/2}
\|f_i\|_{\ell_{s_i^-}^r(\overline X_{\th_i,r})}^{1/2}\Big) \times\\
\|f_k\|_{\ell_{s_k}^r(\overline X_{\th_k,\infty})}
\Big(\prod_{i=k+1}^n \|f_i\|_{\ell_{s_i}^\infty(\overline  X_{\th_i,\infty})}\Big).
\end{multline*}
By Lemma \ref{thetar} and the reiteration theorem  
we get
\begin{multline*}
\|T(f_{1},\dots, f_n)\|_{V} \le\\ C'
\Big(\prod_{i=1}^{k-1}\|f_i\|_{\ell_{s_i}^r(\overline X_{\th_i,r})}\Big)
\|f_k\|_{\ell_{s_k}^r(\widetilde X_{\th_k,\infty})}
\Big(\prod_{i=k+1}^n \|f_i\|_{\ell_{s_i}^\infty(\overline X_{\th_i,\infty})}\Big).
\end{multline*}
This finishes the proof of the implication $\cH_{k+1}\implies \cH_k^\prel$.

\subsubsection*{Proof that $\cH_k^\prel$ implies $\cH_k$}
Fix $A\in DS^\circ(n)$ and let $\eps>0$ so that
all entries of $A$ lie in the open interval $(2\eps, 1-2\eps)$. We define two $n\times n$ matrices $A^+$ and $A^-$ (depending on $k$ and different from the ones in the first step)  by letting
\begin{equation}\label{secondmatrix}
a^\pm_{\mu\nu}=\begin{cases}
a_{\mu\nu} &\text{ if } (\mu,\nu) \notin \big\{
(1,i_1), (1,i_2), (k,i_1), (k,i_2)\big\}
\\
a_{\mu\nu}\pm \eps &\text{ if } (\mu,\nu)=(1,i_1) \quad\text{or} \quad
 (\mu,\nu)=(k,i_2)\,,
\\
a_{\mu\nu}\mp \eps &\text{ if } (\mu,\nu)=(k,i_1) \quad\text{or} \quad
 (\mu,\nu)=(1,i_2)\,.
\end{cases}
\end{equation}

Then $A^+$ and $A^-$ are in $DS^\circ(n)$. It is important for our argument that
the first column of $A^\pm$ is equal to the first column of $A$.
Let $\vec s^\pm =A^\pm\vec\delta$,
$\vec s =A\vec\delta$;
then $s_i^\pm =s_i$ for $i\notin \{1,k\}$ and
$s_1^\pm=s_1\pm(\delta_{i_1}-\delta_{i_2})$,
$s_k^\pm=s_k\pm(\delta_{i_2}-\delta_{i_1})$,
so that by the  assumption $\delta_{i_1}\neq \delta_{i_2}$ we have
  $s_k^+\neq s_k^-$
and $s_k$ is the
arithmetic mean of $s_k^+$ and $s_k^{-}$.
 Moreover $s_i^\pm=s_i$ if $i\notin\{1,k\}$.

We interpolate the linear operator $L_k$ as in \eqref{Lk}.
This time we use $\cH_k^\prel (A^\pm)$ and the formula
\Be\label{secondinterpol}
\big(\ell^r_{s_k^+}(\overline X_{\th_k,\infty}),
\ell^r_{s_k^-}(\overline X_{\th_k,\infty})\big)_{\frac 12,\infty}=
\ell^\infty_{s_k}(\overline X_{\th_k,\infty})
\Ee
which is a special case of formula
\eqref{seqinterpolfixedA} in the appendix.
This  yields
\begin{multline*}
\|T(f_{1},\dots, f_n)\|_{V} \le \\ C\Big(
\prod_{i=1}^{k-1}(\|f_i\|_{\ell_{s_i^+}^r(\overline  X_{\th_i,r})}^{1/2}
\|f_i\|_{\ell_{s_i^-}^r(\overline  X_{\th_i,r})}^{1/2})\Big)
\Big(\prod_{i=k}^n \|f_i\|_{\ell_{s_i}^\infty(\overline  X_{\th_i,\infty})}\Big).
\end{multline*}
By Lemma \ref{thetar} and the reiteration theorem   we also get
$$
\|T(f_{1},\dots, f_n)\|_{V} \le C'
\Big(\prod_{i=1}^{k-1}(\|f_i\|_{\ell_{s_i}^r(\overline  X_{\th_i,r})}\Big)
\Big(\prod_{i=k}^n \|f_i\|_{\ell_{s_i}^\infty(\overline  X_{\th_i,\infty})}\Big)
$$
which is $\cH_k(A)$.
\end{proof}



We are now in the position to give the
\begin{proof}[{\bf Proof of Theorem  \ref{interpolationtheorem}}]
We first reduce to the case  $m=1$. If $\vpi$ denotes the permutation that interchanges  $1$ and $m$, with $\vpi(i)=i$ for $i\notin\{ 1,m\}$, we define
\Be \label{Tvpidef}
T_\vpi[f_1,\dots, f_n]= T[f_{\vpi(1)},\dots, f_{\vpi(n)}].\Ee
If $T$ satisfies the assumptions of 
Theorem  \ref{interpolationtheorem}  then $T_\vpi$ satisfies 
the assumptions with the choice $m=1$ and the parameters 
$(\delta_1,\dots,\delta_n)$  replaced with 
$(\delta_{\vpi(1)},\dots,\delta_{\vpi(n)})$. By symmetry we get the statement for $T$ from the statement for $T_\vpi$.

After this reduction we may assume  $m=1$
in what follows.
Let $A\in DS^\circ(n)$.
For any $B\in DS(n)$ let $\cH_A(B)$ denote the statement 
that there is $ C>0$ so that the inequality 
\begin{equation*}
\|T(f_1,\dots, f_{n})\|_{V} \le C
\prod_{i=1}^n \|f_i\|_{\ell_{s_i}^{q_i}(\overline X_{\theta_i,q_i})}
\end{equation*}
holds for all $(f_1,\dots, f_n)\in \prod_{i=1}^n\ell_{s_i}^{q_i}(\overline X_{\theta_i,q_i})$, under the condition that
$$
\vec s= BA \vec\delta, \quad \vec \theta= BA \vec  e_1, \quad
\begin{pmatrix} r/q_1\\ \vdots\\r/q_n \end{pmatrix} =  B\vec e_1\,.
$$
As an immediate consequence of Lemma  \ref{transferlemma} we see that given $A\in DS^\circ(n)$ the matrices $B$ satisfying $\cH_A(B)$ satisfy a convexity
 property, namely,
if $\cH_A(B^{(0)})$, $\cH_A(B^{(1)})$ hold for $B^{(0)}\in DS(n)$,
$B^{(1)}\in DS(n)$ then for $0<\vth<1$,  
$B^{(\vth)}= (1-\vth) B^{(0)}+\vth
 B^{(1)}$ 
the statement  $\cH_A(B^{(\vth)})$ also holds. 
By Birkhoff's theorem  $\cH_A(B)$ holds for all $B\in DS(n)$ once we have shown it for all permutation matrices.

For this we first   apply 
Lemma \ref{shuffling}, and then, 
 by the conclusion of that lemma  we can apply  Lemma
\ref{upgradelemma} to the 
multilinear operators 
$T_\vpi$ in \eqref{Tvpidef}.
As a consequence there is, for every $A\in DS^\circ(n)$, 
 a constant $C(A)$ so that
for $s_j=\sum_{j=1}^n a_{ij}\delta_j$, $\th_i=a_{i,1}$ and every permutation
$\vpi$
\begin{equation}\label{shufflingconcl-2}
\|T(f_{\vpi(1)},\dots, f_{\vpi(n)})\|_{V} \le C
\|f_1\|_{\ell_{s_1}^r(\widetilde X_{\th_1,r})}\prod_{i=2}^n \|f_i\|_{\ell_{s_i}^\infty(\widetilde X_{\th_i,\infty})}\,.
\end{equation}
It is straightforward to check that this conclusion is exactly
statement $\cH_A(P)$ for all permutation matrices $P$. 
\end{proof}

\section{Hypotheses for the restriction theorem}
\label{hyp}

Given a parameter  interval $I$ we shall  consider a class $\fC$ of vector valued functions $\gamma: J_\gamma\to \bbR^d$ of class $C^d$
defined on subintervals $J_\gamma$ of $I$. For every $\gamma \in \fC$ the restrictions of $\gamma$ to subintervals of $J_\gamma $ will also be in  $\fC$.

\begin{definition}
(i) Let $J$ be an interval and let
$\kappa =(\kappa_1,\dots,\kappa_d)$ so that $\kappa_1\le \kappa_2\le\dots\le \kappa_d$ ,  $\kappa_d-\kappa_1\le |J|$ and one of the coordinates $\ka_i$ 
is equal to $0$. Let 
 $J^\kappa=\{t: t+\kappa_1\in J, \,t+\kappa_d\in J\}$.
Then given a curve $t\mapsto \gamma(t)\in \bbR^d$,
$t\in J$ we define the $\kappa$-offspring curve $\gamma_\ka$ on $J^\ka$ by
$$\gamma_\ka(t)= \sum_{j=1}^d\gamma(t+\kappa_j).$$

(ii) Let $\tau=\tau_\gamma$ be as in \eqref{weight}. We denote by 
$\tau_{\ga_\ka}$ the corresponding expression for the offspring curve, i.e.
$$\tau_{\gamma_\kappa}= \det\big(\sum_{j=1}^d \gamma'(t+\ka_j),\dots,
\sum_{j=1}^d \gamma^{(d)}(t+\ka_j)\big)\,.$$
\end{definition}

Let \Be\label{defofQ} Q=\frac{d^2+d+2}{2}\,\Ee
so  that $Q=p_d'$ with $p_d$ as in \eqref{gammaendpt}.
For $\gamma \in \fC$ defined on $I$ we shall consider  the adjoint operator 
$$\cE_w f(x) = \int_I e^{-\ic \inn{x }{ \gamma (t)}} f(t) w(t) dt $$
with $w(t)\equiv w_\gamma(t)=|\tau_\gamma(t)|^{2/(d^2+d)}$
and then examine the $L^Q(w;I)\to L^{Q,\infty}(\bbR^d)$ operator norms
(here we work with a fixed equivalent norm on $L^{Q,\infty}(\bbR^d)$).
We consider $L^{Q,\infty}(\bbR^d)$ as a normed space, the norm being
$\|h\|^{**}_{L^{Q,\infty}}=\sup_{t>0} t^{1/Q} h^{**}(t)$ where $h^{**}$ is as in
\eqref{maxrearr} with $\rho=1$. 
We also continue to use 
$\|h\|_{L^{Q,\infty}}=\sup \alpha(\meas (\{|h|>\alpha\}))^{1/Q}$,
the usual equivalent quasinorm (and the  constants in this equivalence 
 are  independent of the measure space).

We shall  make the a priori assumption that
\Be\label{defofB}
\cB\equiv \cB(\fC):=
\sup_{\gamma\in \fC}\sup_{\|f\|_{L^{Q}(w_\gamma)}\le 1}
\|\cE_w f\|_{L^{Q,\infty}}^{**}\Ee
is finite and the main goal is to give a geometric bound for the constant $\cB$.
We remark that the finiteness of $\cB$ has been shown in \cite{bos1} for certain classes of smooth curves with nonvanishing torsion. Once a  more effective bound for $\cB$ is  established one  can prove Theorems \ref{powerthm} and \ref{polthm} by limiting arguments.

We now formulate the   hypotheses of our main estimate.
Two  of them were relevant already in \cite{DM1}, \cite{DM2}.

\begin{hyps}\label{hypess}
Let $\fC$ be a class of curves with base interval $I_\circ$.
For $\gamma\in \fC$ defined on $I\subset I_\circ$
let \Be \label{Egamma}
E =\{(t_1,\dots, t_d):  t_1\in I, \,\,t_d\in I,  \,\, t_1<t_2<\dots<t_d \}.
\Ee

(i) There is $N_1\ge 1$ so that for every $\gamma \in \fC$ the map
$\Phi_\gamma:E\to \bbR^d$ with
\Be\label{multipl}
\Phi_\gamma(t_1,\dots, t_d)= \sum_{j=1}^d \gamma(t_j)
\Ee
is of multiplicity at most $N_1$.

(ii) Let $\cJ_{\Phi_\gamma}$ denote the Jacobian of  $\Phi_\gamma$,
$$\cJ_{\Phi_\gamma}(t_1,\dots, t_d)= \det \big(\gamma'(t_1),\dots, \gamma'(t_d)\big).
$$
Then there is $\fc_1>0$ 
such  that for every $(t_1,\dots, t_d)\in \cI^d$ with $t_1<\dots<t_d$ we have the inequality
\Be\label{firstmainhyp}
 |\cJ_{\Phi_\gamma}(t_1,\dots, t_d)| \ge \fc_1
\Big(\prod_{i=1}^d  \tau_\gamma(t_i)\Big)^{1/d}
\prod_{1\le j<k\le d} (t_k-t_j)\,.
\Ee

(iii) Every offspring curve of a curve in $\fC$ is (after possible reparametrization) the affine image of a curve in $\fC$.

(iv)
There is $\fc_2>0$ so that for every $\gamma\in \fC$ and every
offspring curve $\gamma_\ka$ of $\ga$ we have the inequality
\Be\label{torsionoffspringstr}
|\tau_{\gamma_\ka}(t)|\ge \fc_2 \max_{j=1,\dots, d}
|\tau_\gamma(t+\ka_j)|\,.
\Ee
\end{hyps}

Inequality \eqref{torsionoffspringstr}  is a strengthening of a weaker
 inequality which was used in
\cite{DM1}, \cite{DM2}, \cite{bos1}, \cite{bos2},  namely
\Be\label{torsionoffspring}
|\tau_{\gamma_\ka}(t)|\gc \prod_{j=1}^d |\tau_\gamma(t+\ka_j)|^{1/d}.
\Ee
The stronger inequality
allows us to replace the geometric mean on the right hand side of \eqref{torsionoffspring} by generalized geometric means
$\prod_{j=1}^d |\tau_\gamma(t+\ka_j)|^{\eta_j}$ for nonnegative $\eta_j$
with $\sum_{j=1}^d \eta_j=1$. Our main result is

\begin{theorem} \label{mainthm} Let $\fC$ be a class of curves satisfying Hypothesis \ref{hypess} and $\cB(\fC)<\infty$. Then 
\Be\label{cBbound} \cB(\fC)\le C(d, N_1,\fc_1^{-1}, \fc_2^{-1}).
\Ee
\end{theorem}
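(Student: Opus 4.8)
The plan is to estimate the $L^Q(w_\gamma)\to L^{Q,\infty}$ norm of $\cE_w$ uniformly over $\gamma\in\fC$ by passing to a $d$-linear operator built from the phase $\Phi_\gamma(t_1,\dots,t_d)=\sum_j\gamma(t_j)$, exploiting that $d$-fold convolution powers of the extension operator have kernels governed by the pushforward of the affine arclength measure under $\Phi_\gamma$. Concretely, testing $\cE_wf$ against itself $2d$ times (or equivalently estimating $\|(\cE_wf)\|_{L^{2d}}$ type quantities) leads one to consider $\cE_wf_1\cdots\cE_wf_d\cdot\overline{\cE_wg_1\cdots\cE_wg_d}$, and after a change of variables using $\Phi_\gamma$ the Jacobian lower bound \eqref{firstmainhyp} of Hypothesis~\ref{hypess}(ii) converts the affine arclength weights $w_\gamma(t_i)=|\tau_\gamma(t_i)|^{2/(d^2+d)}$ on each factor into the Lebesgue density on the image, precisely because $\bigl(\prod_i\tau_\gamma(t_i)\bigr)^{1/d}\prod_{j<k}(t_k-t_j)$ is, up to the exponent $2/(d^2+d)$ and the Vandermonde factor, the correct normalization (note $\sum_{j<k}1 = \binom d2$ and the powers balance since $Q=\tfrac{d^2+d+2}{2}$). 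The multiplicity bound (i) ensures this change of variables loses only the factor $N_1$.

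The key steps, in order: (1) First I would reduce to a dyadic decomposition of the curve and of the weight: split $I$ into pieces on which $|\tau_\gamma|$ is of fixed dyadic size, which is exactly the block-Lorentz setup $b^Q_{1/Q}(w_\gamma,L^Q)=L^Q(w_\gamma)$ described before Theorem~\ref{interpolationtheorem}, so that the machinery of $\ell^q_s$-spaces applies. (2) On each block, use Hypothesis~\ref{hypess}(iii)--(iv): an offspring curve $\gamma_\ka$ of $\gamma$ is (after reparametrization and affine map) again in $\fC$, and the strengthened torsion inequality \eqref{torsionoffspringstr} lets one dominate the torsion of the offspring by the \emph{maximum}, hence by any generalized geometric mean, of the parent torsions at the shifted points — this is what produces unequal exponents $\eta_j$ and is the whole reason \eqref{torsionoffspringstr} replaces the weaker \eqref{torsionoffspring}. (3) Set up the $d$-linear operator $T$ acting on $d$-tuples of sequences (indexed by the dyadic weight-blocks) valued in $L^Q$-type spaces, establish the endpoint bounds \eqref{Thypothesis} with $V=L^{Q,\infty}$ ($r$-convex since $Q>1$, in fact we use $L^{Q,\infty}$ is $r$-convex for $r<Q$ — actually here one takes $r$ so that $Q=\text{something}$; the relevant $r$ is determined by $d\cdot r^{-1}$ matching the counting of factors), in the off-diagonal form where one factor sits in $X_1$ and the rest in $X_0$, the asymmetry coming from putting all the ``loss'' on one variable. (4) Apply Theorem~\ref{interpolationtheorem} (specifically the balanced case \eqref{Tconclbalanced}, transferred to block-Lorentz spaces via the retract $\imath,\vars$ as explained after the theorem) to symmetrize and recover the diagonal estimate $L^Q(w_\gamma)\to L^{Q,\infty}$ with constant depending only on $d,N_1,\fc_1^{-1},\fc_2^{-1}$. (5) Undo the dyadic decomposition and use the a priori finiteness of $\cB(\fC)$ to absorb any ``tail'' or self-referential term (the offspring curves lie in $\fC$, so their $\cE_w$ norms are controlled by $\cB$, but only finitely many geometric reductions are needed, giving a clean bound rather than a circular one).

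The main obstacle I expect is Step (3): verifying the off-diagonal multilinear endpoint estimate \eqref{Thypothesis} with the correct affine-invariant bookkeeping. One must check that after the $\Phi_\gamma$-substitution the weight $\prod_i w_\gamma(t_i)$ really matches the Jacobian factor raised to the power $2/(d^2+d)$ together with a power of the Vandermonde $\prod_{j<k}(t_k-t_j)$ that is integrable/absorbable, and that the offspring-curve torsion bound \eqref{torsionoffspringstr} interfaces correctly with the weights on the sequence spaces $\ell^r_{\delta_i}(X_\bullet)$ — i.e.\ that the shifts $\ka_j$ only permute dyadic blocks in a controlled way. The subtlety is that the exponents $\delta_i$ must be chosen so that the hypothesis \eqref{Thypothesis} holds for \emph{every} permutation $\vpi$, which forces a particular symmetric choice that then feeds into \eqref{Tconclbalanced}; getting the arithmetic of these exponents to close (consistent with $Q=\tfrac{d^2+d+2}{2}$ and $\sum q_i^{-1}=r^{-1}$) is where the real work lies. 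Once \eqref{Thypothesis} is in hand, the passage to \eqref{cBbound} is essentially bookkeeping plus the cited interpolation theorem.
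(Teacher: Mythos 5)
Your outline assembles several correct ingredients (the map $\Phi_\gamma$ with the multiplicity and Jacobian hypotheses, offspring curves with \eqref{torsionoffspringstr}, the block-Lorentz retract, Theorem \ref{interpolationtheorem}), but it omits the mechanism that actually produces the endpoint bound. The paper does not get the estimate from a single change of variables: it decomposes the $d$-linear operator $\cM[f_1,\dots,f_d]=\prod_i\cE_w f_i$ dyadically in the size of the \emph{Vandermonde determinant}, $E_l=\{2^{-l-1}<|V(t)|\le 2^{-l}\}$, and proves two complementary bounds for each piece $\cM_l$: an $L^2$ bound \eqref{MlL2} (change of variables via $\Phi_\gamma$, Plancherel, \eqref{firstmainhyp}, and the Drury--Marshall sublevel-set estimate \eqref{sublevelsetest}), which grows like $2^{l(d-2)/(2d)}$ and does not involve $\cB$, and an $L^{Q,\infty}$ bound \eqref{MlLQinfty}, decaying like $2^{-2l/d}$ and \emph{linear} in $\cB$; this is where the offspring curves and \eqref{torsionoffspringstr} actually enter, after the substitution $t_j=s+\ka_j$, with the $\ka$-region again controlled by \eqref{sublevelsetest}. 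Summing via Bourgain's threshold trick yields Lemma \ref{Mlemma}, a bound for $\cM$ into $L^{Q/d,\infty}$ with constant $\cC\,\cB^{(d-2)/(d+2)}$, i.e.\ with a \emph{fractional} power of $\cB$. Your plan contains no analogue of this decomposition or of the $L^2$/weak-$L^Q$ interplay, and your account of Step (5) is not how the self-reference is resolved: it is not that ``only finitely many geometric reductions are needed'' (the offspring estimate is invoked with the full constant $\cB$ every time); rather the final inequality reads $\cB\lc\cC^{1/d}\cB^{(d-2)/(d(d+2))}$, whose exponent on $\cB$ is strictly less than $1$, and the a priori finiteness of $\cB$ then gives \eqref{finalbound}. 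Without the fractional power produced by interpolating the $\cB$-free $L^2$ bound against the $\cB$-linear weak bound, this absorption is impossible.

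There is also an arithmetic obstruction to your Step (3) as stated. Theorem \ref{interpolationtheorem} needs the target to be $r$-convex with $r\le 1$ and $\sum_i q_i^{-1}=r^{-1}$, and the balanced conclusion \eqref{Tconclbalanced} delivers the norm of $\ell^{nr}_{\sigma}(\overline X_{1/n,nr})$; to recover $L^Q(w)=b^Q_{1/Q}(w,L^Q)$ on the diagonal one must have $nr=Q$, hence $n\ge Q>d$ factors. So a $d$-linear operator with target $L^{Q,\infty}$ cannot close the argument. The paper instead works with the $n$-linear operator $\prod_{i=1}^n\cE_w f_i$ for $n>Q$ and $r=Q/n<1$ (so that $L^{r,\infty}$ is $r$-convex), applies Lemma \ref{Mlemma} to $d$ of the factors and the single-factor estimate \eqref{restrwtest} to the remaining $n-d$, and perturbs the exponents with the small parameter $\mu$ precisely so that $\delta_2\neq\delta_3$, since the hypothesis of Theorem \ref{interpolationtheorem} requires the $\delta_i$, $i\neq m$, not all equal (your ``symmetric choice'' would violate this). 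These are not bookkeeping details: the Vandermonde decomposition, the two-estimate interplay giving $\cB^{(d-2)/(d+2)}$, and the $n>Q$ multilinear setup are the core of the proof, and the proposal as written does not reach the endpoint without them.
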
 
For an explicit constant see \eqref{finalbound} below.

\section{Proof of Theorem \ref{mainthm}}\label{proofofmaintheorem}
Let $w\equiv w_\gamma$ define the affine arclength measure of $\gamma$.  We start with 
\begin{observation}\label{observationweight}
If  $\gamma\in \fC$ is defined on $I$  and  $\widetilde w$ is a nonnegative measurable weight  satisfying
$\widetilde w(t)\le w(t)$ then there is a constant $C$ such that
$$
\big\|\cE_{\widetilde w} f\big\|_{L^{Q,\infty}(\bbR^d)} \le C\cB \Big(\int_I|f(t)|^Q \widetilde w(t) \, dt\Big)^{1/Q}\,.
$$
\end{observation}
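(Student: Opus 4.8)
The plan is to absorb the bounded factor $\widetilde w/w$ into the density and then invoke the definition \eqref{defofB} of $\cB$ directly. First I would dispose of the degeneracy set: on $\{t\in I: w(t)=0\}$ the hypothesis $0\le \widetilde w\le w$ forces $\widetilde w(t)=0$, so that set contributes nothing to $\cE_{\widetilde w}f$ (or to any of the integrals below) and may simply be discarded. On its complement set $g(t):=f(t)\,\widetilde w(t)/w(t)$, which satisfies $|g(t)|\le |f(t)|$ pointwise because $0\le \widetilde w(t)/w(t)\le 1$ there. Then $\cE_{\widetilde w}f=\cE_w g$ as functions of $x$.

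Next I would apply the a priori bound. Since $\gamma\in\fC$ is defined on $I$, the definition \eqref{defofB} gives $\|\cE_w g\|^{**}_{L^{Q,\infty}}\le \cB\,\|g\|_{L^Q(w)}$, and by the equivalence of $\|\cdot\|_{L^{Q,\infty}}$ with $\|\cdot\|^{**}_{L^{Q,\infty}}$ (with a constant depending only on $Q$, hence only on $d$) we obtain $\|\cE_{\widetilde w}f\|_{L^{Q,\infty}}\le C\,\cB\,\|g\|_{L^Q(w)}$.

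It remains to estimate $\|g\|_{L^Q(w)}$ in terms of $\|f\|_{L^Q(\widetilde w)}$. Since $Q=\tfrac{d^2+d+2}{2}\ge 1$ and $0\le \widetilde w/w\le1$,
$$\|g\|_{L^Q(w)}^Q=\int_I |f|^Q\Big(\frac{\widetilde w}{w}\Big)^{Q}\,w\,dt=\int_I |f|^Q\Big(\frac{\widetilde w}{w}\Big)^{Q-1}\,\widetilde w\,dt\le \int_I|f|^Q\,\widetilde w\,dt,$$
which is exactly the claimed inequality (the statement being vacuous when the right-hand side is infinite). There is essentially no obstacle here; the only points needing a word of care are the treatment of $\{w=0\}$ and the bookkeeping between the two equivalent (quasi)norms on $L^{Q,\infty}$ — the $**$-norm used in the definition of $\cB$ versus the quasinorm $\|\cdot\|_{L^{Q,\infty}}$ in the conclusion.
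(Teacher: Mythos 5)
Your argument is correct and coincides with the paper's own direct proof: writing $\widetilde w=hw$ with $0\le h\le 1$ (your $g=hf$), using $\cE_{\widetilde w}f=\cE_w(hf)$, the a priori bound defining $\cB$, and $h^Q\le h$. The extra care you take with the set $\{w=0\}$ and with the equivalence of the quasinorm and the $**$-norm on $L^{Q,\infty}$ is fine and consistent with the conventions fixed in \S\ref{hyp}.
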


\begin{proof} One can use a  duality argument (as in the submitted version) 
 to deduce the  claim
from $\|\widehat g\circ \gamma\|_{L^{Q'}(\widetilde w)}\le 
\|\widehat g\circ \gamma\|_{L^{Q'}(w)}$.
The referee suggested 
a  simpler and more direct 
 argument: Write $\widetilde w=hw$ where $0\le h\le 1$ and use that 
$\cE_{\widetilde w} f=\cE_w(hf)$ and $h^Q\le h$.
\end{proof}


We aim to  prove estimates for the $d$-linear operator $\cM$ defined by
$$
\begin{aligned}
\cM[f_1,\dots, f_d](x)&= \prod_{i=1}^d \cE_w f_i(x)\\&=
\int_{I^d} \exp(-\ic \inn{x}{\sum_{j=1}^d\gamma(t_j)}) \prod_{i=1}^d[f_j(t_j)w(t_j)] \,dt_1\dots dt_d\end{aligned}
$$
Denote by $V(t_1,\dots, t_d)= \prod_{1\le i<j\le d}(t_j-t_i)$ the Vandermonde determinant 
and let,  for $l\in \bbZ$,  
$$E_l=\{(t_1,\dots, t_d)\in I^d: 2^{-l-1}<|V(t)|\le 2^{-l}\}.$$
Following the reasoning in \cite{baklee}, \cite{bos1} 
for the nondegenerate case we decompose  $\cM=\sum_{l\in \bbZ} \cM_l$ where 
\Be\label{Mldef}\cM_l[f_1,\dots, f_d](x)=
\int_{E_l} \exp(-\ic \inn{x}{\sum_{j=1}^d\gamma(t_j)})\prod_{i=1}^d[f_i(t_j)w(t_i)]\, dt_1\dots dt_d\,.
\Ee

An important ingredient
is an estimate for the sublevel sets of  the restriction of $V$ to $\bbR^{d-1}$, namely 
\Be\label{sublevelsetest}
\meas\big(\big\{(h_1,\dots, h_{d-1}):\,|h_1\cdots h_{d-1}|\prod_{1\le i<j\le d-1} 
|h_j-h_i|\le \alpha\big\}\big)\le C_d\alpha^{2/d}
\Ee
where the measure is Lebesgue measure in $\bbR^{d-1}$. This was proved in
\cite{DM1}  (\cf.  also the exposition in 
\cite{bos1}).

\begin{lemma}\label{Mlestimates}
(a) Let $\rho_i\in [2,\infty]$ be such that $\sum_{i=1}^d \rho_i^{-1}=1/2$.
Then
\Be \label{MlL2}
\|\cM_l [f_1,\dots, f_d]\|_{L^{2}}\lc (N_1/\fc_1)^{1/2} 2^{l\frac{d-2}{2d}} 
\prod_{i=1}^d\big\|f_i w^{\frac{3-d}{4}}\big\|_{\rho_i}.
\Ee

(b) Let $\eta_i\in [0,1]$ so that $\sum_{i=1}^d \eta_i=1$, and $q_i$ such that $\sum_{i=1}^d q_i^{-1}= 1/Q$.
Then 
\Be \label{MlLQinfty}
\|\cM_l [f_1,\dots, f_d]\|_{L^{Q,\infty}}\lc \cB \fc_2^{-1/Q} 2^{-2l/d}  
\prod_{i=1}^d\big\|f_i w^{1- \frac{\eta_i}{Q'}}\big\|_{q_i}.\Ee
\end{lemma}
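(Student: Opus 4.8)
The two estimates follow the standard two-endpoint scheme for multilinear restriction: an $L^2$-based estimate proved by Plancherel/Hausdorff--Young together with the Jacobian lower bound Hypothesis \ref{hypess}(ii), and an $L^{Q,\infty}$ estimate obtained by interpolating the a priori bound $\cB$ with itself after absorbing the weight, using Hypothesis \ref{hypess}(iv). I will treat the $d$ integration variables on $E_l$, where $|V(t)|\sim 2^{-l}$, and exploit the change of variables $x\mapsto \Phi_\gamma(t)=\sum_j\gamma(t_j)$.

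\emph{Part (a).} Write $\cM_l[f_1,\dots,f_d](x)=\widehat{g_l}(x)$ where $g_l$ is (a multiple of) the pushforward under $\Phi_\gamma$ of the density $\prod_i f_i(t_i)w(t_i)\,\chi_{E_l}$, divided by the Jacobian $\cJ_{\Phi_\gamma}$ on each of the $\le N_1$ sheets. By Plancherel, $\|\cM_l[f_1,\dots,f_d]\|_{L^2}^2=(2\pi)^d\|g_l\|_{L^2}^2$; expanding and using that $\Phi_\gamma$ has multiplicity at most $N_1$ (Hypothesis \ref{hypess}(i)) to control the overlap, one gets
\[
\|\cM_l[f_1,\dots,f_d]\|_{L^2}^2\lc N_1\int_{E_l}\frac{\prod_{i=1}^d|f_i(t_i)|^2 w(t_i)^2}{|\cJ_{\Phi_\gamma}(t_1,\dots,t_d)|}\,dt\,.
\]
Now insert the lower bound \eqref{firstmainhyp}: $|\cJ_{\Phi_\gamma}|\ge\fc_1(\prod\tau_\gamma(t_i))^{1/d}|V(t)|$. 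Since $w=|\tau_\gamma|^{2/(d^2+d)}$ and $|V(t)|\sim2^{-l}$ on $E_l$, the integrand is bounded by $\fc_1^{-1}2^{l}\prod_i|f_i(t_i)|^2 w(t_i)^{2-\frac{1}{d}\cdot\frac{d^2+d}{2}}=\fc_1^{-1}2^{l}\prod_i|f_i(t_i)|^2 w(t_i)^{(3-d)/2}$. So after absorbing one factor $\fc_1^{-1/2}N_1^{1/2}$ it remains to estimate $\big(\int_{E_l}\prod_i |f_i w^{(3-d)/4}|^2(t_i)\,dt\big)^{1/2}$ by $2^{l\frac{d-2}{2d}}2^{-l/2}\prod\|f_iw^{(3-d)/4}\|_{\rho_i}$, i.e.\ the $E_l$-localized multilinear form applied to $F_i:=f_iw^{(3-d)/4}$ satisfies $\int_{E_l}\prod|F_i|\lc 2^{-l\cdot\frac{2}{d}}\prod\|F_i\|_{\rho_i/1}$... more precisely one needs $\big(\int_{E_l}\prod F_i^2\big)^{1/2}\lc 2^{-l/d}\prod\|F_i\|_{\rho_i}$. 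Freeze $t_d$ and change variables $h_i=t_i-t_d$; then $\chi_{E_l}$ forces $|h_1\cdots h_{d-1}|\prod_{i<j\le d-1}|h_j-h_i|\lesssim 2^{-l}$, so the sublevel set estimate \eqref{sublevelsetest} gives that the $(h_1,\dots,h_{d-1})$-slice has measure $\lc 2^{-2l/d}$; a multilinear H\"older argument in the $d$ variables against this measure bound, splitting $\frac12=\sum\rho_i^{-1}$, yields exactly $2^{-l/d}\prod\|F_i\|_{\rho_i}$ (the power $2^{-l/d}$ being the square root of $2^{-2l/d}$). Combining gives \eqref{MlL2}.

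\emph{Part (b).} Here I use the hypothesis $\cB<\infty$: for each $i$, $\|\cE_wf_i\|_{L^{Q,\infty}}\le\cB\|f_i\|_{L^Q(w)}$. But I want weights $w^{1-\eta_i/Q'}$ with $\sum\eta_i=1$. The point is the offspring/torsion estimate: on $E_l$ the relevant weight that appears when one pushes forward is $\prod_j w(t_j)$, and by \eqref{torsionoffspringstr} applied to the $\ka$-offspring curve (with $\ka$ encoding the tuple $(t_1,\dots,t_d)$ recentered) one has, for any nonnegative $\eta_j$ summing to $1$, $|\tau_{\gamma_\ka}(t)|\ge\fc_2\prod_j|\tau_\gamma(t+\ka_j)|^{\eta_j}$, hence a pointwise comparison allowing the product weight to be redistributed as $\prod_j w(t_j)^{\eta_j\cdot(\text{stuff})}$ at the cost of $\fc_2^{-1/Q}$. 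Concretely: factor $\cM_l=\widehat{G_l}$ with $G_l$ the pushforward of $\prod f_i w\,\chi_{E_l}$; the $L^{Q,\infty}\to L^{Q,\infty}$ type bound $\cB$ for the single operator $\cE_w$ gives, by a multilinear interpolation/H\"older argument across the $d$ factors (using that $L^{Q,\infty}$ is normable for $Q>1$ and the product of $d$ functions each in $L^{Q,\infty}$-type control lands in $L^{Q,\infty}$ after the $\sum q_i^{-1}=1/Q$ splitting), a bound of the shape $\cB\prod_i\|f_i w^{a_i}\|_{q_i}$ for suitable exponents $a_i$. The $2^{-2l/d}$ gain comes from restricting to $E_l$: on $E_l$, $|V(t)|\le2^{-l}$, and the sublevel set estimate \eqref{sublevelsetest} again bounds the measure of the relevant slices by $2^{-2l/d}$, which after the $L^Q$ integrations produces the factor $2^{-2l/d}$ (note the exponent matches $-(2/d)$, not $(2/d)\cdot(1/Q)$, because one is measuring a $(d-1)$-dimensional sublevel set and the extra integration is in the remaining free variable, cf.\ the bookkeeping in \cite{bos1}). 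Finally, to get the exact weight exponents $1-\eta_i/Q'$: start from the trivial bound with weight $w$ on each factor (i.e.\ $\|\cE_wf_i\|_{L^{Q,\infty}}\le\cB\|f_i\|_{L^Q(w)}=\cB\|f_iw^{1/Q}\|_{L^Q}$) — wait, this needs care — rather, the natural formulation is that the product weight $\prod_j w(t_j)$ appearing in $\cM_l$ gets split, via \eqref{torsionoffspringstr} and the multiplicativity of the determinant, into $\prod_j w(t_j)\cdot\prod_j w(t_j)^{-\eta_j/Q'}\cdot(\text{something controlled})$, with the leftover bounded by $\fc_2^{-1/Q}2^{-2l/d}$; carrying the $f_j$'s along gives $\prod_j\|f_jw^{1-\eta_j/Q'}\|_{q_j}$.

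\emph{Main obstacle.} The delicate part is Part (b): correctly extracting the factor $2^{-2l/d}$ and the precise weight exponents $1-\eta_i/Q'$ from the interplay of (i) the a priori bound $\cB$, (ii) the offspring-torsion inequality \eqref{torsionoffspringstr} used with general weights $\eta_j$ (this is exactly where the \emph{strengthened} hypothesis, rather than \eqref{torsionoffspring}, is needed), and (iii) the sublevel set measure bound \eqref{sublevelsetest} localized to $E_l$. Part (a) is more routine: Plancherel plus the Jacobian lower bound plus a H\"older estimate against the sublevel set bound. The bookkeeping of which power of $2^{-l}$ survives each integration — $2^{l(d-2)/2d}=2^{l/2}\cdot 2^{-l/d}$ in (a) versus $2^{-2l/d}$ in (b) — must be tracked carefully, and I would organize the proof so that in both parts one reduces, after absorbing weights, to estimating the $E_l$-localized $d$-linear Vandermonde form $\int_{E_l}\prod_i|F_i(t_i)|\,dt$ via \eqref{sublevelsetest} and H\"older.
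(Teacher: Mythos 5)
Part (a) of your proposal is essentially the paper's argument and is correct: Plancherel after the change of variables $\Phi_\gamma$, the multiplicity bound $N_1$, the Jacobian lower bound \eqref{firstmainhyp} (your weight bookkeeping $w^2/w^{(d+1)/2}=w^{(3-d)/2}$ is right), and then the sublevel set estimate \eqref{sublevelsetest}; your direct H\"older step with exponents $\rho_i/2$ against the slice-measure bound $2^{-2l/d}$ is a harmless variant of the paper's endpoint-plus-interpolation step and gives the same factor $2^{l/2}\cdot 2^{-l/d}=2^{l\frac{d-2}{2d}}$.

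Part (b), however, has a genuine gap. You name the right ingredients ($\cB$, the strengthened torsion inequality \eqref{torsionoffspringstr} with general exponents $\eta_j$, and \eqref{sublevelsetest}), but the mechanism you propose for combining them does not work. $\cM_l$ is not a product of the operators $\cE_w f_i$ --- the localization to $E_l$ couples the variables --- so there is no ``multilinear interpolation/H\"older argument across the $d$ factors'' in which $\cB$ is applied to each factor; moreover, multiplying $d$ factors each controlled in $L^{Q,\infty}$ would land you in $L^{Q/d,\infty}$, not in $L^{Q,\infty}$ as \eqref{MlLQinfty} requires, and ``interpolating $\cB$ with itself'' has no content here. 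The actual argument applies $\cB$ exactly once per frozen offset: one splits $\cM_l$ according to the ordering of $(t_1,\dots,t_d)$, changes variables $t_j=s+\ka_j$ (base point plus offsets $\ka$ in a hyperplane), and observes that for each fixed $\ka$ the inner $s$-integral is a single adjoint restriction operator along the offspring curve $\gamma_\ka$. Hypothesis \ref{hypess}(iii) --- which you never invoke --- guarantees $\gamma_\ka$ is an affine image of a curve in $\fC$, so the a priori bound $\cB$ applies to it with its own weight $w_\ka$; then \eqref{torsionoffspringstr} together with Observation \ref{observationweight} lets one replace $w_\ka$ by the smaller weight $\fc_2^{2/(d^2+d)}\prod_j w(s+\ka_j)^{\eta_j}$, which is exactly what produces the exponents $1-\eta_j/Q'$ and the factor $\fc_2^{-1/Q}$. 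The $d-1$ functions attached to the offsets are pulled out in $L^\infty$, one integrates in $\ka$ using Minkowski's inequality (normability of an equivalent norm on $L^{Q,\infty}$), and the factor $2^{-2l/d}$ is the measure of the admissible set of offsets, by \eqref{sublevelsetest}; this yields the unbalanced estimate with one factor in $L^Q$ and the rest in $L^\infty$, from which the general $q_i$ in \eqref{MlLQinfty} follow by symmetry and interpolation. Your sketch contains none of this structure --- no splitting by orderings, no base-point/offset change of variables, no single application of $\cB$ along an offspring curve, no Minkowski step --- so as written it does not prove (b).
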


\begin{proof}
For every permutation $\vpi$ on $d$  letters  set $$E^{\vpi}=
\{(t_1,\dots, t_d): t_{\vpi(1)}<\dots<t_{\vpi(d)}\}.$$
By assumption the  map $\Phi:
(t_1,\dots, t_d)\mapsto \sum_{i=1}^d\gamma(t_i)$ is of bounded multiplicity $d!N_1$ on 
$\cup_{\vpi} E^{\vpi}$. We apply the change of variable, followed by Plancherel's theorem, followed by the inverse change of variable to bound
$$\|\cM_l[f_1,\dots, f_d]\|_2\lc \Big(N_1\int_{E_l}
\big|   \prod_{i=2}^d[f_i(t_i)w(t_i)\big|^2 \frac{dt_1\dots dt_d}{|J_{\Phi}(t_1,\dots, t_d)|}
\Big)^{1/2}.$$
By our assumption \eqref{firstmainhyp}, the right hand side is dominated by
\begin{align*}
&\Big(N_1\int_{E_l}
\big|  \prod_{i=1}^d f_i(t_i)w(t_i)\big|^2 \frac{dt_1\dots dt_d}{\fc_1|V(t)|\prod_{i=1}^d w(t_i)^{\frac{d+1}{2}}}
\Big)^{1/2}
\\
&\lc \Big(\frac{N_1}{\fc_1}\int_{E_l}
\big|  \prod_{i=1}^d f_i(t_i)w(t_i)^{\frac{3-d}{4}}
\big|^2\frac{ dt_1\dots dt_d}{|V(t)|}
\Big)^{1/2}
\end{align*}
The sublevel set estimate \eqref {sublevelsetest} with $\alpha=2^{-l}$
yields 
$$\Big(\int_{E_l}
\big| \prod_{i=1}^dg_i(t_i)\big|^2 \frac{dt_1\dots dt_d}{|V(t)|}
\Big)^{1/2}\lc 2^{l\frac{d-2}{2d}}  \|g_d\|_2 \prod_{i=1}^{d-1} \|g_i\|_\infty\,.$$
By symmetry we get a similar statement with  the variables permuted and 
then by complex interpolation also 
\Be\label{relaxingLinfty}
\Big(\int_{E_l}
\big|   \prod_{i=1}^d g_i(t_i)\big|^2 \frac{dt_1\dots dt_d}{|V(t)|}
\Big)^{1/2}\lc 2^{l\frac{d-2}{2d}}   \prod_{i=1}^d\|g_i\|_{\rho_i}
\Ee
where $\sum_{i=1}^d \rho_i^{-1}=1/2$. If we apply this statement with 
$g_i= f_iw^{\frac{3-d}{4}}$ we obtain
\eqref{MlL2}.

To prove \eqref{MlLQinfty} 
it suffices to 
show  that for fixed $(\eta_1,\dots,\eta_d)$ with 
$\eta_i\ge 0$ and $\sum_{i=1}^d \eta_{i}=1$
\Be \label{MlLQinftyunbalanced}
\|\cM_l [f_1,\dots, f_d]\|_{L^{Q,\infty}}
\lc \cB \fc_2^{-1/Q} 2^{-2l/d}  
\big\|f_1 w^{1- \frac{\eta_1}{Q'}}\big\|_Q
\prod_{i=2}^{d}\big\|f_i w^{1- \frac{\eta_i}{Q'}}
\big\|_{\infty}.\Ee
Once this inequality is verified, we also  get, by the symmetry of $\cM_l$, the bound
$\cB \fc_2^{-1/Q} 2^{-2l/d}\|f_m w^{1- \frac{\eta_m}{Q'}}\|_Q
\prod_{i\neq m}\big\|f_i w^{1- \frac{\eta_i}{Q'}}\|_\infty$ and then the inequality 
\eqref{MlLQinfty}  follows by complex  interpolation.

Let, for any permutation $\vpi$ on $d$ letters
$$E^\vpi_l=
\{(t_1,\dots, t_d)\in I^d: t_{\vpi(1)}<\dots<t_{\vpi(d)},\,\, 2^{-l-1}<V(t_1,\dots, t_d)
\le 2^{-l}\}.$$
To prove
\eqref{MlLQinftyunbalanced}  
we split $\cM_l=\sum_\vpi M_l^\vpi$ where
$$M_l^\vpi [f_1,\dots, f_d](x)=
\int_{E^\vpi_l} \exp(-\ic \inn{x}{\sum_{j=1}^d\gamma(t_j)})
\prod_{i=1}^d[f_i(t_i)w(t_i)] \,dt_1\dots dt_d\,.
$$
We need to bound
$\|\cM_l^\vpi [f_1,\dots, f_d]\|_{L^{Q,\infty}}$ by the right hand side of 
\eqref{MlLQinftyunbalanced}.

Now let $\nu=\vpi^{-1}(1)$ and let $H_\nu$ be the hyperplane
$\{(\kappa_1,\dots,\kappa_d): \kappa_\nu=0\}$. 
We change variables
$$s=t_1,\quad \kappa_j= t_{\vpi(j)}-t_1, \,\,j\neq \nu$$
hence $t_i=s+\ka_{\vpi^{-1}(i)}
$, $i=1,\dots, d$, and thus $t_1=s$.
Note that
with this identification
\begin{multline*}
|V(s, s+\ka_{\vpi^{-1}(2)},\dots, s+\ka_{\vpi^{-1}(d)})|\\=
\prod_{i=2}^{d} |\ka_{\vpi^{-1}(i)}| \prod_{2\le i<j\le d}
 |\ka_{\vpi^{-1}(j)}-\ka_{\vpi^{-1}(i)}|
= \prod_{i\neq \nu}|\ka_i|
\prod_{\substack{1\le i<j\le d\\i\neq\nu,\,j\neq\nu}} |\kappa_j-\kappa_i|
\end{multline*}
and $\sum_{j=1}^d\gamma(s+\kappa_{\vpi^{-1}(j)})=\gamma(s)+\sum_{i\neq\nu} \gamma(s+\kappa_i)$.

If  $I=[a_L,a_R]$ we define $I^\ka=[a_L-\ka_1, a_R-\ka_d]$
and 
\begin{multline*}
\sD_l^\nu=\big\{\kappa: \kappa_1<\kappa_2<\dots< \kappa_d, \,\kappa_\nu=0,\, \kappa_d-\kappa_1\le a_R-a_L,\\
2^{-l-1}\le \prod_{i\neq \nu}|\kappa_i| 
\prod_{\substack{1\le i<j\le d\\i\neq\nu,\,j\neq\nu}} |\kappa_j-\kappa_i|
<2^{-l}\big\}.\end{multline*}
Let $dm\ci{H_\nu} $ denote 
Lebesgue measure on $H_\nu$ (in $d-1$ dimensions). Let, 
 for $\ka\in H_\nu$, denote by $\ga_\ka$ the offspring curve 
$\gamma_\ka(s)=\gamma(s)+\sum_{i\neq\nu}\gamma(s+\ka_i)=
\sum_{i=1}^d\gamma(s+\ka_i)$.
Then
\begin{multline}\label{Mlpidescr}
M_l^\vpi [f_1,\dots, f_d](x)\,=\,
\int_{\sD^\nu_l} \int_{I^\ka} \exp(-\ic \inn{x}{\gamma_\ka(s)})
 \,\times
\\f_{\vpi(\nu)}(s) w(s) \prod_{i\neq \nu}[f_{\vpi(i)}(s+\ka_i)w(s+\ka_i)] 
\,ds\,
dm\ci{H_\nu}(\ka)\,.
\end{multline}

Let $$w_\kappa(t)= |\tau_{\gamma_\kappa}(t)|^{\frac{2}{d^2+d}},$$
the weight  for the 
 affine arclength measure associated to $\gamma_\kappa$.
 By assumption the offspring curve 
$\gamma_\kappa$ is (after possible reparametrization)  an affine image of a curve in the family $\fC$.
Thus, by affine invariance  and invariance under reparametrizations
 we have the inequality
$$\Big\|
\int_{I_\kappa}
 \exp(-\ic \inn{\cdot }{\gamma_{\ka}(s)}) g(s) w_{\kappa}(s)
\,ds \Big\|_{L^{Q,\infty}}\le \cB \|g\|_{L^Q(w_\kappa)}.
$$
By hypothesis  \eqref{torsionoffspringstr}, and
$\sum_{i=1}^d \eta_{\vpi(i)}=\sum_{i=1}^d \eta_{i}=1$,
$$
w_{\kappa}(s)\ge \fc_2^{\frac{2}{d^2+d}} 
\prod_{i=1}^d w(s+\kappa_i)^{\eta_{\vpi(i)}}, \quad s\in I^\ka\,.
$$
By Observation \ref{observationweight} we then also have
\begin{multline*}
\Big\|
\int_{I^\ka}
 \exp(-\ic \inn{\cdot }{\gamma_{\ka}(s)}) g(s) \fc_2^{\frac{2}{d^2+d}} 
\prod_{i=1}^d w(s+\kappa_i)^{\eta_{\vpi(i)}}
ds \Big\|_{L^{Q,\infty}}\\
\lc \cB 
\Big(\int_{I_\kappa}|g(s)|^Q  \fc_2^{\frac{2}{d^2+d}} 
\prod_{i=1}^d w(s+\kappa_{i})^{\eta_{\vpi(i)}} ds\Big)^{1/Q}\,.
\end{multline*}
We  apply this with $$g(s)\equiv G^{\kappa}(s)
:= 
f_{\vpi(\nu)}(s) w(s)^{1-\eta_{\vpi(\nu)}} \prod_{i\neq \nu}
[f_{\vpi(i)}(s+\ka_i)w(s+\ka_i)^{1-\eta_{\vpi(i)} }]
$$
and, using the relation $\frac{2}{d^2+d}(\frac 1Q-1)=-\frac{1}{Q}$, we  arrive at
\begin{multline}\label{smallerweight}
\Big\|
\int_{I^\ka}
 \exp(-\ic \inn{\cdot }{\gamma_{\ka}(s)}) G^\ka(s) 
\prod_{i=1}^d w(s+\kappa_i)^{\eta_{\vpi(i)}}
ds \Big\|_{L^{Q,\infty}}\\
\lc \cB \fc_2^{-1/Q} 
\Big(\int_{I_\kappa}|G^\ka(s)|^Q   
\prod_{i=1}^d w(s+\kappa_i)^{\eta_{\vpi(i)}} ds\Big)^{1/Q}\,.
\end{multline}
Now use the  
triangle inequality for an equivalent norm 
in the   space $L^{Q,\infty}$  and apply the analogue of the integral Minkowski
 inequality to get 
\begin{align*}
&\big\| M_{l}^\pi[ f_1, \cdots, f_d]\|_{L^{Q,\infty}}\\
&\lc  \int_{\sD_l^\nu} \Big\|  \int_{I^\ka} e^{\ic \inn{\cdot}{\gamma_\ka(s)}} G^\ka(s) 
\prod_{i=1}^d  w(s+ \kappa_i)^{\eta_{\vpi(i)}} \, ds \Big\|_{L^{Q,\infty}} \,
 dm\ci{H^\nu}(\ka)
\\
&\lc  \fc_2^{-1/Q} \cB \int_{\sD_l^\nu} \Big(  \int_{I^\ka}\big|
f_{\vpi(\nu)}(s) w(s)^{1-\eta_{\vpi(\nu)}}
\prod_{i\neq \nu} [f_{\vpi(i)}(s+\ka_i) w(s+\ka_i)^{1-\eta_{\vpi(i)}}]
\big|^Q \\
&\qquad\qquad\qquad\qquad\qquad \times
\prod_{i=1}^d w(s+\ka_i)^{\eta_{\vpi(i)}} \,ds\Big)^{1/Q} 
 dm\ci{H^\nu}(\ka)
\end{align*}
which is
\begin{multline*}
\lc  \fc_2^{-1/Q} \cB \int_{\sD_l^\nu} \Big(  
\int|f_{\vpi(\nu)}(s) w(s)^{1-\eta_{\vpi(\nu)}/Q'}|^Q ds\Big)^{1/Q} \\ \times 
\prod_{i\neq\nu} \|f_{\vpi(i)} w^{1-\eta_{\vpi(i)}/Q'}\|_\infty
 dm\ci{H^\nu}(\ka)\,.
\end{multline*}
By \eqref{sublevelsetest} we have $m\ci{H^\nu}(\sD_l^\nu)\lc 2^{-2l/d}$.
Thus the last displayed quantity is less than a constant times 
\begin{align*}
&  2^{-2l/d}  \fc_2^{-1/Q} \cB 
\big\|f_{\vpi(\nu)} w^{1-\eta_{\vpi(\nu)}/Q'}\big\|_Q
\prod_{i\neq\nu} \|f_{\vpi(i)} w^{1-\eta_{\vpi(i)}/Q'}\|_\infty
\\
=&2^{-2l/d}  \fc_2^{-1/Q} \cB 
\big\|f_{1} w^{1-\eta_d/Q'}\big\|_Q
\prod_{j=2}^{d} \|f_{j} w^{1-\eta_{j}/Q'}\|_\infty
\end{align*}
and \eqref{MlLQinftyunbalanced}  is proved.
\end{proof}

\begin{lemma}
\label{Mlemma}
 Let $q_i\in [Q,\infty]$, $\rho_i\in [2,\infty]$, $\vth_i\in [0,1]$
 satisfy 
$$\sum_{i=1}^d \Big(\frac{1}{q_i}, \frac 1{\rho_i}, \eta_i\Big) = \Big(\frac 1Q, \frac 12, 1 \Big).$$
Let
\begin{align}
\label{pidef}
\frac{1}{p_i}&= \frac{d-2}{d+2} \frac{1}{q_i}+ \frac{4}{d+2} \frac 1{\rho_i}\,,
\\
\label{betaidef}\beta_i &= \frac{d-2}{d+2} \Big(1-\frac{\eta_i}{Q'}\Big)
+ \frac{3-d}{d+2}\,.
\end{align}
Then $\sum_{i=1}^d p_i^{-1}=d/Q$, $\sum_{i=1}^d \beta_i=d/Q$, and we have
\Be\label{Mest}
\big\|  \cM[f_1,\dots, f_d]\big\|_{L^{Q/d,\infty}}
\lc  \cC
\cB^{\frac{d-2}{d+2}}
\prod_{i=1}^d \|f_i\|_{b^1_{\beta_i}(w,L^{p_i,1})}
\Ee
with
\Be \label{cCdef}
\cC=  \big(N_1/\fc_1\big)^{\frac{d-2}{2(d+2)}}
\fc_2^{-\frac{4}{(d+2)Q}}.\Ee
\end{lemma}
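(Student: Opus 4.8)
The two bounds of Lemma~\ref{Mlestimates} are the two endpoints of the argument: \eqref{MlL2} is an $L^2$ estimate, free of $\cB$ but growing like $2^{l(d-2)/(2d)}$ in the dyadic parameter $l$, while \eqref{MlLQinfty} is an $L^{Q,\infty}$ estimate, carrying $\cB$ but decaying like $2^{-2l/d}$. Since $2<Q/d<Q$, real interpolation between these two should land in $L^{Q/d,\infty}$; the plan is to carry this out in a way that also sums the series $\cM=\sum_l\cM_l$, and then to invoke the multilinear interpolation theorem with symmetries (Theorem~\ref{interpolationtheorem}, in the block-Lorentz-space form mentioned after its statement) to obtain the sharp secondary exponents, i.e.\ the spaces $b^1_{\beta_i}(w,L^{p_i,1})$.

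\textbf{Summing over $l$.} The interpolation parameter is $\theta=\frac{Q-2d}{Q-2}=\frac{d-2}{d+2}$, and the weights $1-\theta=\frac4{d+2}$ and $\theta=\frac{d-2}{d+2}$ attached to the two estimates are exactly the coefficients occurring in \eqref{pidef} and \eqref{betaidef}. The difficulty is that the balanced pointwise interpolation of \eqref{MlL2} and \eqref{MlLQinfty} produces the exponent $(1-\theta)\frac{d-2}{2d}-\theta\frac2d=0$ in $l$, so $\sum_l$ does not converge on the nose. One instead splits $\cM=\sum_{l<N}\cM_l+\sum_{l\ge N}\cM_l$ with a height-dependent cut $N=N(\lambda)$, estimates the first sum in $L^2$ (geometric convergence as $l\to-\infty$) and the second in $L^{Q,\infty}$ — using that $L^{Q,\infty}$ is normable since $Q>1$, so the $L^{Q,\infty}$-norms of the $\cM_l$ may be added — and then applies Chebyshev to each piece and optimizes $N$; the identity $\frac{d-2}d+\frac{2Q}d=d+2$ makes the two contributions to $\meas(\{|\cM|>\lambda\})$ balance and yields exactly the power $\lambda^{-Q/d}$. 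This produces, for every admissible choice of $\rho_i,q_i,\eta_i$ and every permutation, a single estimate for $\cM$ into $L^{Q/d,\infty}$.

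\textbf{Block Lorentz spaces and the Lorentz index.} Because \eqref{MlL2} and \eqref{MlLQinfty} carry \emph{different} powers of $w$ in each slot (namely $w^{(3-d)/4}$ versus $w^{1-\eta_i/Q'}$), the interpolated estimate is naturally phrased not over weighted Lebesgue spaces but over block Lorentz spaces: one decomposes each $f_i$ over the dyadic level sets $\Omega[w,k]$ of $w$, on which $w$ is essentially constant, so the weighted norms collapse to weighted sequence norms. Using the retract structure $b^q_s(w,X)\hookrightarrow\ell^q_s(X)$ the resulting family is exactly of the form to which Theorem~\ref{interpolationtheorem} applies, with one distinguished slot in a weighted $L^p$-block space and the others in weighted $L^\infty$-block spaces, the estimate being available for all permutations. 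The non-degeneracy hypothesis of Theorem~\ref{interpolationtheorem} — that the $\ell$-weights $\delta_i$, $i\neq m$, are not all equal — is met by choosing the $\eta_i$ pairwise distinct; this is precisely where the strengthened torsion inequality \eqref{torsionoffspringstr}, which permits the generalized geometric means $\prod_j|\tau_\gamma(t+\kappa_j)|^{\eta_j}$ with non-equal $\eta_j$ in \eqref{MlLQinfty}, is needed, the weaker \eqref{torsionoffspring} forcing $\eta_j\equiv1/d$ and hence all $\delta_i$ equal. Theorem~\ref{interpolationtheorem} then returns the balanced conclusion with all slots in $b^1_{\beta_i}(w,L^{p_i,1})$; the identities $\sum p_i^{-1}=\sum\beta_i=d/Q$ follow by summing \eqref{pidef}, \eqref{betaidef} against the constraints on $\rho_i,q_i,\eta_i$.

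\textbf{Constants and the main obstacle.} The constant $\cB$ enters only through \eqref{MlLQinfty}, weighted by $\theta=\frac{d-2}{d+2}$ in the interpolation, so it appears to the power $\cB^{(d-2)/(d+2)}$; carrying the remaining geometric factors $(N_1/\fc_1)^{1/2}$ and $\fc_2^{-1/Q}$ through the interpolation and collecting them produces the constant $\cC$ of \eqref{cCdef}. I expect the main obstacle to be the summation over $l$ in the second step: the criticality of the endpoint is exactly reflected in the vanishing of the $l$-exponent after balanced interpolation, so the naive sum diverges and one must run the $\lambda$-dependent splitting together with the precise exponent arithmetic; a subsidiary technical point is the identification of the interpolation spaces of the weighted Lebesgue spaces as block Lorentz spaces and the bookkeeping needed to verify the hypotheses of Theorem~\ref{interpolationtheorem}.
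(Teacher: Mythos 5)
Your summation step is exactly the paper's argument (Bourgain's splitting of $\sum_l \cM_l$ at a height-dependent cut, Chebyshev on each piece, and the balancing identity $d-2+2Q=d(d+2)$), and your bookkeeping of $\sum p_i^{-1}=\sum\beta_i=d/Q$ and of the powers of $\cB$, $N_1/\fc_1$, $\fc_2$ is fine. The gap is in your final step: Theorem \ref{interpolationtheorem} is not the tool that converts the summed estimate into \eqref{Mest}, and as described it cannot be. After the $\lambda$-dependent splitting what you hold is a bound of generalized-geometric-mean type, $\|\cM\|_{L^{Q/d,\infty}}\lc \cC\cB^{\frac{d-2}{d+2}}\prod_i\|f_iw^{1-\eta_i/Q'}\|_{q_i}^{\frac{d-2}{d+2}}\|f_iw^{\frac{3-d}{4}}\|_{\rho_i}^{\frac{4}{d+2}}$, i.e.\ each slot carries a product of norms in two \emph{different} weighted spaces. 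The hypothesis \eqref{Thypothesis} of Theorem \ref{interpolationtheorem} has a different shape: a single compatible couple $(X_0,X_1)$, one distinguished function in (a sequence space over) $X_1$ and \emph{all} the others in the same $X_0$, all with outer exponent $r$, and a single fixed target $V$; it cannot combine the two inputs \eqref{MlL2} and \eqref{MlLQinfty}, whose distinguished-slot spaces ($L^2$ versus $L^Q$, with different powers of $w$) and whose targets ($L^2$ versus $L^{Q,\infty}$) differ, nor does it apply to the geometric-mean bound above. Moreover its conclusion forces $\sum_i q_i^{-1}=r^{-1}$, so to land every slot in $b^1_{\beta_i}(w,L^{p_i,1})$ you would need $r=1/d$ and a hypothesis with outer $\ell^{1/d}$ block norms over one fixed couple, which is not what the summation step produces. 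What is actually needed here is Lemma \ref{thetar}: since $Q/d>1$, the target $L^{Q/d,\infty}$ is normable (1-convex), so the geometric-mean bound upgrades to a bound with each $f_i$ in the real interpolation space $(b^1_{1-\eta_i/Q'}(w,L^{q_i}),\,b^1_{\frac{3-d}{4}}(w,L^{\rho_i}))_{\frac{4}{d+2},1}$; then the retract structure together with Lemma \ref{quasi} and the identity $(L^{q_i},L^{\rho_i})_{\frac{4}{d+2},1}=L^{p_i,1}$ embeds $b^1_{\beta_i}(w,L^{p_i,1})$ into that space, giving \eqref{Mest}. This means-to-interpolation-space step is the missing ingredient in your write-up.

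A related structural point: your requirement that the $\eta_i$ be pairwise distinct is extraneous to this lemma and would make your statement strictly weaker than \eqref{Mest}, which holds for every admissible choice of $\eta_i$ (including all equal to $1/d$). The non-degeneracy condition ``the $\delta_i$, $i\neq m$, not all equal'' of Theorem \ref{interpolationtheorem}, and with it the freedom afforded by the strengthened torsion inequality \eqref{torsionoffspringstr} to take unequal $\eta_j$, is needed only at the later stage of the argument, when Theorem \ref{interpolationtheorem} is applied to the $n$-linear operator $\prod_{i=1}^n\cE_w f_i$ with $n>Q$ in order to pass from the restricted-type spaces $b^1_{\beta_i}(w,L^{p_i,1})$ of the present lemma to $L^Q(w)$. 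Keeping that application out of the proof of the lemma, and replacing it here by Lemma \ref{thetar} plus Lemma \ref{quasi}, closes the gap.
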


\begin{proof}
We may interpolate the $L^2$ bounds and the $L^{Q,\infty}$ bounds for $\cM_l$ to get a satisfactory 
estimate for the 
$L^{Q/d,\infty}$ norm of each $\cM_l$ but the resulting estimates cannot be summed in $l$. We use a familiar 
trick  from \cite{Bo1},  estimating  $\sum_l \cM_l$ using the bound \eqref{MlL2} for $2^l\le \Lambda$
and  the bound \eqref{MlLQinfty} for $2^l> \Lambda$, for $\Lambda$ to be determined.
For fixed $\alpha>0$ we need to estimate the measure of 
$G_\alpha= \{x: |\cM_l[f_1,\dots, f_d]|>2\alpha\}$. By Tshebyshev's inequality, 
$$\meas(G_\alpha)
\le \alpha^{-2}\big\|\sum_{2^l\le \Lambda}\cM_l[f_1,\dots, f_d]\big\|_2^2
+ \alpha^{-Q}\|\sum_{2^l> \Lambda}\cM_l[f_1,\dots, f_d]\|_{L^{Q,\infty}}^Q$$
and applying Lemma \ref{Mlestimates} we obtain
\Be \label{Lambdaestimate} \meas(G_\alpha)
 \le \alpha^{-2} \La^{\frac{d-2}{d}} \Gamma^2
+ \alpha^{-Q}\La^{-2Q/d} \Delta^Q\, ,
\Ee
with $$
\Gamma:= (N_1/\fc_1)^{1/2} \prod_{i=1}^d
\big\|f_i w^{\frac{3-d}{4}}\big\|_{\rho_i}, \qquad 
\Delta:=\cB \fc_2^{-1/Q} \prod_{i=1}^d\big\|f_i w^{1- \frac{\eta_i}{Q'}}\big\|_{q_i}.
$$
We choose $\Lambda$ so that the two expressions on the right hand side
 of \eqref{Lambdaestimate} balance,
i.e. $\Lambda= (\alpha^{2-Q}\Gamma^Q/\Delta^2)^{\frac d{d-2+2Q}}$.
This leads to the bound
$ \meas(G_\alpha)\lc \big(\alpha^{-1}\Delta^{\frac{d-2}{d+2}}\Gamma^{\frac 4{d+2}}\big)^{\frac{(d+2)Q}{d-2+2Q}}$
and we have $\frac{(d+2)Q}{d-2+2Q}=\frac Qd$ for $Q=\frac{d^2+d+2}{2}$. Thus
$$
\|\cM[f_1, \cdots, f_d]\|_{L^{Q/d,\infty}} \lc \cC \cB^{\frac{d-2}{d+2}}  \Delta^{\frac{d-2}{d+2}}\Gamma^{\frac 4{d+2}}
$$
with 
$\cC$ as in \eqref{cCdef}.

By Lemma \ref{thetar} the previous display implies that
\Be \label{wktypeQdesr}
\|\cM[f_1, \cdots, f_d]\|_{L^{Q/d,\infty}} \lc \cC \cB^{\frac{d-2}{d+2}} 
\prod_{i=1}^d \| f_i\|_{\overline  Y_{\frac{4}{d+2},1}^i}
\Ee
where the interpolation space refers to the couple $\overline Y^i$
with 
$$Y_0^i=b^1_{1-\frac{\eta_i}{Q'}}(w,L^{q_i})\,,\qquad Y_1^i=b^1_{\frac{3-d}{4}} (w,L^{r_i})\,.$$ 
Since the block Lorentz spaces are retracts of sequence spaces ({\it cf.} the discussion following  \eqref{blockLorentzdef}) the formula
\eqref{obvious} implies
the continuous embedding
$$b^1_{(1-\vth)(1-\frac{\eta_i}{Q'})+\vth\frac{3-d}{4}}((L^{q_i}, L^{\rho_i})_{\vth,1})
\hookrightarrow (b^1_{1-\frac{\eta_i}{Q'}}(w,L^{q_i}), b^1_{\frac{3-d}{4}}(w, L^{\rho_i}))_{\vth,1}
$$
We apply this with $\vth=4/(d+2)$. Then if
$p_i$, $\beta_i$ are  in \eqref{pidef}, \eqref{betaidef} 
the usual interpolation formula for Lorentz spaces gives
$$(L^{q_i}, L^{\rho_i})_{\frac{4}{d+2},1}= L^{p_i,1}\,.$$
and it follows that  $b^1_{\beta_i} (w, L^{p_i,1}) $ is continuously  embedded in $\overline  Y^i_{4/(d+2),1}$. Now  \eqref{Mest}
follows from \eqref{wktypeQdesr}.
\end{proof}

As stated above the conditions \eqref{pidef}, \eqref{betaidef} 
give 
$\sum_{i=1}^d p_i^{-1}=\sum_{i=1}^d\beta_i=d/Q$.
In particular we may choose  $p_i=Q$ and $\beta_i=1/Q$ and this choice 
 yields an estimate
for $f_i \in {b^1_{1/Q}(w,L^{Q,1})}
$, in particular (after setting all $f_i$ equal to $f$)
\Be \label{restrwtest}
\|\cE_w f\|_{L^{Q,\infty} } \lc \cC^{1/d}\cB^{\frac{d-2}{d^2+2d} }
\|f\|_{b^1_{1/Q}(L^{Q,1})}\,.
\Ee
However we need a better estimate for 
$f_i$ in the larger space 
${b^Q_{1/Q}(w,L^{Q})} = L^Q(w)$. In what follows  write
$b^p_s(L^p)=b^p_s(w,L^p)$ as the weight $w$ will be fixed.

\begin{proof}[Proof of Theorem \ref{mainthm}, cont.]
We  choose $n>Q$ and estimate the $n$-linear operator
$$T[f_1,\dots, f_n]= \prod_{i=1}^n \cE_w f_i $$
in $L^{r,\infty}$ where $r=Q/n<1$.

For every  permutation $\vpi$ on $n$ letters we may 
write  $$T[f_1,\dots, f_n]= \cM[f_{\vpi(1)},\dots, f_{\vpi(d)}]
\prod_{i=d+1}^n \cE_w f_{\vpi(i)}\,.$$
Notice that by H\"older's inequality for Lorentz spaces
\Be \label{HoelderLor}
\big\|T[f_1,\dots, f_n]\|_{L^{r,\infty}}
\le C \| \cM[f_{\vpi(1)},\dots, f_{\vpi(d)}]\|_{L^{Q/d,\infty}} \prod_{i=d+1}^n
\|\cE_w f_{\vpi(i)}\|_{L^{Q,\infty}}\,.
\Ee

We apply 
Lemma \ref{Mlemma} for  special choices of the parameters $q_i$, $\rho_i$.
Let $\mu$ be a small parameter (say $|\mu|\ll (10 Qd^2)^{-1}$), put
  $\rho_i= 2d$, $i=1,\dots, d$,
 let
\begin{align*}
\frac{1}{q_3}&= \frac{1}{Qd}+\mu \frac{d+2}{d-2}\,,
\\
\frac{1}{q_2}&= \frac{1}{Qd}+ \mu\frac{d+2}{n-2}\,,
\\
 \frac{1}{q_1}&= \frac{1}{Qd}-\mu (d+2)\frac{n-1}{n-2}\,,
\end{align*}
and set $q_d=\dots=q_3$.
Then $\sum_{i=1}^d q_i^{-1}=Q^{-1}$
and $q_3<q_2<Qd<q_1$ if $\mu>0$ (for $\mu<0$ these inequalities are reversed).
Now by \eqref{pidef}, $p_i^{-1}=\frac{d-2}{d+2}q_i^{-1}+ \frac{2}{d(d+2)}$ and since $Q=\frac{d^2+d+2}{2}$ we  have $\frac{d-2}{(d+2)Q}+\frac{2}{d+2}=\frac dQ$ and thus 
$p_i^{-1}=Q^{-1} +(q_i^{-1}- (Qd)^{-1})\frac{d-2}{d+2}$. Hence
$p_3=\dots=p_d$ and
\begin{align*}
\frac{1}{p_3}&= \frac{1}{Q}+\mu \,,
\\
\frac{1}{p_2}&= \frac{1}{Q}+\mu \frac{d-2}{n-2}\,,
\\
 \frac{1}{p_1}&= \frac{1}{Q}-\mu (d-2)\frac{n-1}{n-2}\,.
\end{align*}
Then $\sum_{i=1}^d p_i^{-1}= d/Q$, moreover $p_3<p_2<Q<p_1$ if $\mu>0$.
A crucial property of our choices is 
\Be
\label{nminustwocond} \frac{1}{p_2}= \frac{d-2}{n-2}\frac 1{p_3}+
 \frac{n-d}{n-2}\frac 1{Q}.
\Ee

 Let $\beta_i$ be as in 
\eqref{betaidef} (with $\eta_3=\dots=\eta_d$  and the choice of $\eta_2$ and $\eta_3$ to be determined later).
With these choices 
we use  \eqref{HoelderLor}, and apply \eqref{Mest}
 for the term involving $\cM$ and    \eqref{restrwtest} for the remaining $n-d$ terms. This results in 
\begin{multline*}
\big\|T[f_1,\dots, f_n]\|_{L^{r,\infty}}\lc 
\cB^{\frac{d-2}{d+2}}
\|f_{\vpi(1)}\|_{b^1_{\beta_1}(L^{p_1,1})}
\|f_{\vpi(2)}\|_{b^1_{\beta_2}(L^{p_2,1})}\\ \times
\prod_{i=3}^d \|f_{\vpi(i)}\|_{b^1_{\beta_3}(L^{p_3,1})}
\prod_{j=d+1}^n [\cC^{1/d}\cB^{\frac{d-2}{d^2+2d}} \|f_{\vpi(j)}\|_{b^1_{1/Q}(L^{Q,1})}]\,.
\end{multline*}
Now fix the first two entries and take generalized geometric means of these estimates to get
\begin{multline*}\big\|T[f_1,\dots, f_n]\|_{L^{r,\infty}}\lc 
\cC \cB^{n-d+\frac{d-2}{d+2}}
\|f_{\vpi(1)}\|_{b^1_{\beta_1}(L^{p_1,1})}
\|f_{\vpi(2)}\|_{b^1_{\beta_2}(L^{p_2,1})}\\ \times
\prod_{i=3}^n\big[ \|f_{\vpi(i)}\|_{b^1_{\beta_3}(L^{p_3,1})}^{\frac{d-2}{n-2}}
 \|f_{\vpi(i)}\|_{b^1_{1/Q}(L^{Q,1})}^{\frac{n-d}{n-2}}].
\end{multline*}
By  \eqref{nminustwocond},
and Lemma \ref{thetar}, we get
\begin{multline*}\big\|T[f_1,\dots, f_n]\|_{L^{r,\infty}}\lc 
\cC \cB^{\frac{d-2}{d+2}}
\big(\cC^{\frac 1d}\cB^{\frac{d-2}{d^2+2d}}\big)^{n-d}
\|f_{\vpi(1)}\|_{b^1_{\beta_1}(L^{p_1,1})}
\|f_{\vpi(2)}\|_{b^1_{\beta_2}(L^{p_2,1})}\\
\times \prod_{i=3}^n
\|f_{\vpi(i)}\|_{(b^1_{\beta_3}(L^{p_3,1}),
 b^1_{1/Q}(L^{Q,1}))_{\frac{n-d}{n-2},r}} \,.
\end{multline*}
The constant simplifies to $(\cC \cB^{\frac{d-2}{d+2}})^{n/d}$.
By Lemma \ref{quasi} and a trivial embedding for the first two
 factors we also get
\begin{multline*}
\big\|T[f_1,\dots, f_n]\big\|_{L^{r,\infty}}\lc \\
(\cC \cB^{\frac{d-2}{d+2}})^{n/d}
\|f_{\vpi(1)}\|_{b^r_{\delta_1}(L^{p_1,r})}
\|f_{\vpi(2)}\|_{b^r_{\delta_2}(L^{p_2,r})}
\prod_{i=3}^n
\|f_{\vpi(i)}\|_{b^r_{\delta_3}(L^{p_2,r})} \,
\end{multline*}
where $\delta_1=\beta_1$, $\delta_2=\beta_2$ and 
$$\delta_3= \frac{d-2}{n-2} \beta_3+ \frac{n-d}{n-2} \frac{1}{Q}\,.$$
We may choose $\eta_2, \eta_3$, so that $\delta_2\neq\delta_3$.
This is needed  for the  application of Theorem \ref{interpolationtheorem}.
We choose $X_0=L^{p_2,1}$, $X_1=L^{p_1,1}$ and by the conclusion \eqref{Tconclbalanced} of that theorem we obtain 
$$\big\|T[f_1,\dots, f_n]\|_{L^{r,\infty}}\lc 
(\cC \cB^{\frac{d-2}{d+2}})^{n/d}\prod_{i=1}^n \|f_i\|_{b^{nr}_s((X_0,X_1)_{1/n, nr})},$$ with
$s= \frac{1}{n}(\delta_1+\delta_2+(n-2)\delta_3)$. 
Now $r=Q/n$  and $s=1/Q$ since
$$\begin{aligned} sn&=\sum_{i=1}^n\delta_i= \delta_1+\delta_2+(n-2)\Big(
\frac{d-2}{n-2}\beta_3+ \frac{n-d}{n-2}\frac{1}{Q}\Big)
\\&=\beta_1+\beta_2+ (d-2)\beta_3+\frac{n-d}{Q}
\\&=\sum_{i=1}^d \Big(\frac{d-2}{d+2}(1-\frac{\eta_i}{Q'}) +\frac{3-d}{d+2}\Big)
+\frac{n-d}{Q}
\\&= \frac{d-2}{d+2}(d-1+\frac{1}{Q})+ \frac{d(3-d)}{d+2}+ \frac{n-d}Q= \frac{d}{Q}+\frac{n-d}{Q}.
\end{aligned}$$
Also  $\frac{n-1}{n}\frac 1{p_2}+\frac 1n\frac 1{p_1}=\frac{1}{Q}$
and thus 
$(X_0,X_1)_{1/n, nr}=(L^{p_2,r}, L^{p_1,r})_{1/n,nr}=L^Q$, and therefore
$b^{nr}_s(w,(X_0,X_1)_{1/n, nr})= b^Q_{1/Q}(w,L^Q)=L^Q(w)$.

Take $f_1=\dots=f_n=f$ and since
$$\big\|T[f,\dots, f]\big\|_{L^{r,\infty}} \approx \big\|\cE_w f\big\|_{L^{Q,\infty}}^n$$
we get  $\cB^n \lc (\cC \cB^{\frac{d-2}{d+2}})^{n/d}$ 
or $\cB \lc \cC^{1/d} \cB^{\frac{d-2}{d(d+2)}}$, and this finally implies
\Be\label{finalbound} 
\cB\le C(d) \cC^{\frac{d+2}{d^2+d+2}}\Ee
where 
$\cC$ is as in \eqref{cCdef}.
\end{proof}

\section{Proof of Theorem \ref{powerthm}}\label{powertheoremsect}
The crucial idea, due to Drury and Marshall \cite{DM2}, is to use an 
exponential parametrization.
Fix $R>0$,  $I_R=[0,R]$ let 
$b=(b_1,\dots, b_d)\in \bbR^d$, $b_i\neq 0$,  
$$\gamma(t)\equiv \Gamma^b(t) =(b_1^{-1} e^{b_1 t},\dots, b_d^{-1} e^{b_n t})\,,$$
and let $\fC_{b,R}$ be the class consisting of $\Gamma^b$ and  restrictions of 
$\Gamma^b$ to subintervals.
The objective is to prove the 
bound $\|\cE_wf\|_{L^{Q,\infty}} \le C (\int_0^R|f(t)|^Qw(t) dt)^{1/Q}$ with a constant independent of 
$b$ and  $R$. This inequality is trivial if some of the $b_i$
coincide since then $w=0$ and thus $\cE_w=0$.
When
the $b_i$ are pairwise different
a priori we at least know that the quantity $\cB(\fC_{b,R})$ is  finite, but  
with a bound possibly depending on 
 $b$ and $R$. To see this
one may apply the result of \cite{bos1} since the torsion $\tau$ is positive
and  $\Gamma^b$ is smooth on the compact interval $I_R$.

We need to check  Hypotheses \ref{hypess}. Most of this work has already been done in \cite{DM2}.
If we form the $\kappa$-offspring curves  $\gamma_\kappa$ (see the definition in \S\ref{hyp}), then $\gamma_\ka(t)= \gamma(t) E(\kappa)$ where 
$E(\kappa)$ denotes the diagonal matrix with entries 
$$E_{ii}(\kappa)=
\sum_{j=1}^de^{b_i\kappa_j}$$ and thus is an affine image of a curve in $\fC$.
The bounded multiplicity hypothesis  is valid by the  discussion in \cite{DM2}, p. 549
(this goes back to a paper by Steinig \cite{steinig}).
The crucial inequality \eqref{firstmainhyp} has already been verified by Drury and Marshall 
who proved the relevant \lq total positivity\rq  \ bound in \cite {DM2}, p. 546; \cf.  also  \cite{DeM} for an alternative approach. The constant $\fc_1$ is independent of $b$ and the estimate holds globally.

It remains to verify the second main assumption,
inequality \eqref{torsionoffspringstr}.
We recall from \cite{DM2}, \cite{bos1} formulas for the torsion
 $\tau(t) = \tau_b(t)$ of $\Gamma^b
(t)$:
$$ |\tau_b(t)| = |V(b_1, \dots, b_d)| \, \exp \Big(t \sum_{j=1}^d b_j \Big) .$$
where $ V(b_1, \dots, b_d) = \prod_{1\le i<j\le d}(b_j-b_i)$
is the Vandermonde determinant.
For the  torsion of the offspring curve
$\gamma_\kappa$ we have 
$$ |\tau_{\gamma_\ka}(t)| = |V(b_1, \dots, b_d)| \, \exp \Big(t \sum_{j=1}^d b_j \Big) \prod_{i=1}^d
E_{ii} (\ka)\,.$$
Since
$E_{ii}(\ka) \ge \exp (b_i \kappa_j)$, for $1\le j\le d$, we have
$$ \prod_{i=1}^d E_{ii}(\ka) \ge \prod_{i=1}^d \exp (b_i \kappa_j) = \exp
\Big(\kappa_j \sum_{i=1}^d b_i \Big) .$$
Therefore, it follows that $$|\tau_{\gamma_{\ka}}(t)| \ge |\tau (t+\kappa_j)|,
\quad 1\le j\le d,$$ and  
\eqref{torsionoffspringstr} is proved with
$\fc_2=1$.
Now Theorem \ref{mainthm} gives a uniform bound for the classes $\fC_{b,R}$ and 
letting  $R\to \infty$, we also get a global result.
To prove the asserted result for  monomial curves on $[0,\infty)$ we consider the intervals $[0,1]$ and $[1,\infty)$ separately, introduce an exponential parametrization on each interval and use the invariance of affine 
arclength measure under changes of parametrizations. \qed

\medskip

\noi{\it Remark.}
The sharp $L^p\to L^q$ estimates for monomial curves in our earlier paper 
\cite{bos1} 
have been recently  extended by Dendrinos and M\"uller \cite{DeM} to cover small local perturbations of monomial curves. In their setting 
they prove an analogue of 
the geometric assumption \eqref{firstmainhyp}; moreover, Lemma 2 and Lemma 4
 in \cite{DeM} show that a variant of the above calculation 
remains true and 
\eqref{torsionoffspringstr} continues to hold (although
no  global uniformity result is proved in this setting). One  can thus 
obtain  
a local analogue of Theorem \ref{powerthm} for perturbations of monomial curves. As a consequence one also gets an  $L^{p_d,1}\to L^{p_d}$ endpoint 
result for  
every curve of finite type, defined on a compact interval, and the estimate is stable under small perturbations.

\section{Curves of simple type and the proof of Theorem \ref{polthm}}\label{simpletypethms} 
As observed in \cite{DM1} some technical issues in  the restriction
problem with respect to affine measure become  easier  for classes 
of curves of {\it simple type} on some interval $I$, namely $\gamma\in C^d$ is supposed to be of the form
\begin{equation}\label{simple}
\gamma (t)=\Bigl( t,\frac{t^2}{2!},\dots
,\frac{t^{d-1}}{(d-1)!},\phi (t) \Bigr) , \quad t\in I.
\end{equation}
In this case $\tau(t)= \phi^{(d)}(t)$.
Moreover, because of the triangular structure of the matrix defining the torsion,  the torsion
 of the offspring curve is easy to compute. We get
$$\tau_{\gamma_\kappa}(t)= d^{d-1} \Big(\sum_{j=1}^d\phi^{(d)}(t+\kappa_j)\Big)\,.$$
Consequently,  the  verification of  condition \eqref{torsionoffspringstr}
is often trivial:

\begin{observation} \label{simpleobs} Let $\gamma$ be as in \eqref{simple} and assume that on an interval $I$ the function $\phi^{(d)}$ is of constant sign.
Let $t+\kappa_1,\,t+\kappa_d\in I$.
Then condition \eqref{torsionoffspringstr} holds with $\fc_2=1$.
\end{observation}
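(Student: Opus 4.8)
The plan is to read off the conclusion directly from the torsion formula for the offspring curve recorded just above the statement. Since $\gamma$ has simple type we have $\tau_\gamma=\phi^{(d)}$, and the triangular structure of the torsion matrix gives $\tau_{\gamma_\kappa}(t)=d^{d-1}\sum_{j=1}^d\phi^{(d)}(t+\kappa_j)$. The only real content is to observe that the constant-sign hypothesis on $\phi^{(d)}$ prevents cancellation in this sum.

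First I would note that all the arguments $t+\kappa_j$ lie in $I$. By the definition of a $\kappa$-offspring curve the shifts satisfy $\kappa_1\le\kappa_j\le\kappa_d$ for every $j$, while the hypothesis provides $t+\kappa_1\in I$ and $t+\kappa_d\in I$; since $I$ is an interval, hence convex, the intermediate points $t+\kappa_j$ also belong to $I$. Consequently the numbers $\phi^{(d)}(t+\kappa_j)$, $j=1,\dots,d$, all have the same sign.

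Then the estimate is immediate: the absence of cancellation gives
\[
\Bigl|\sum_{j=1}^d\phi^{(d)}(t+\kappa_j)\Bigr|=\sum_{j=1}^d|\phi^{(d)}(t+\kappa_j)|\ge\max_{1\le j\le d}|\phi^{(d)}(t+\kappa_j)|,
\]
and since $d^{d-1}\ge 1$ this yields $|\tau_{\gamma_\kappa}(t)|\ge\max_{1\le j\le d}|\tau_\gamma(t+\kappa_j)|$, which is exactly \eqref{torsionoffspringstr} with $\fc_2=1$. There is no substantive obstacle here; the only step meriting a word is the membership $t+\kappa_j\in I$, and in fact even the factor $d^{d-1}$ is not needed for the inequality. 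The genuine work lies in the preceding computation of $\tau_{\gamma_\kappa}$ via the triangular structure of the simple-type curve, which I am taking as already established.
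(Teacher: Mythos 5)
Your proof is correct and follows essentially the same route as the paper: apply the displayed formula $\tau_{\gamma_\kappa}(t)=d^{d-1}\sum_{j=1}^d\phi^{(d)}(t+\kappa_j)$, use the constant sign of $\phi^{(d)}$ on $I$ to rule out cancellation, and conclude $|\tau_{\gamma_\kappa}(t)|\ge d^{d-1}|\tau(t+\kappa_j)|\ge|\tau(t+\kappa_j)|$ for each $j$. Your explicit remark that $t+\kappa_j\in I$ for all $j$ (by $\kappa_1\le\kappa_j\le\kappa_d$ and convexity of $I$) is a point the paper leaves implicit, but otherwise the arguments coincide.
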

Indeed, for $1\le j\le d$,
$$|\tau_{\gamma_\ka}(t)|= d^{d-1}
\Big|\sum_{i=1}^d \phi^{(d)}(t+\kappa_i)\Big|
\ge d^{d-1} \,|\phi^{(d)}(t+\kappa_j)|= d^{d-1}|\tau (t+\kappa_j)|$$
so that  \eqref{torsionoffspringstr} holds.

In contrast, the  verification of our first main hypothesis
\eqref{firstmainhyp}
can be  hard. The inequality on suitable subintervals 
has been verified for polynomial curves $(P_1,\dots, P_d)$ by Dendrinos
and Wright \cite{DeW}, and their  argument is of great  complexity. 
An extension  to curves whose coordinate functions are rational has been 
worked out in \cite{DeFW}.
Below we give a rather short argument of \eqref{firstmainhyp} 
for the case of a polynomial curve of simple type. In this case
one can prove an estimate which is slightly stronger than \eqref{firstmainhyp}.

We finally remark that both \eqref{firstmainhyp} 
and (by the observation above)
\eqref{torsionoffspringstr} hold for a class of \lq convex\rq \  curves of simple type considered in \cite{bos2}. This  class also contains nontrivial  examples in which  the curvature vanishes to infinite order at a point.

\subsection*{Jacobian estimate for polynomial curves of simple type}
The strengthened version of 
\eqref{firstmainhyp}  
 for simple  polynomial curves is

\begin{proposition}\label{jacest}
Let $\gamma(t)= 
\big(t, \frac{t^2}{2!},\dots,
\frac{t^{d-1}}{(d-1)!} ,P_b(t)\big)$, $P_b(t)=\sum_{j=0}^Nb_j t^j$.
Put
$$J(t,\ka)=|\det (\gamma '
(t+\kappa_1), \cdots, \gamma ' (t+\kappa_d ))| $$
where $\kappa_1<\cdots< \kappa_d$.
Then $\bbR$ is
the union of $C(N,d)$ intervals $I_n$ such that whenever $t+\ka_1$, $t+
\ka_d \in I_n$, we have
\begin{equation}\label{ineq1}
J(t,\kappa)\geq c(N,d)\,|V(\ka)|\,\max\{|\phi ^{(d)}(t+\kappa_j)|: \, 1\le
j\le d \};
\end{equation}
here $V(\ka)= \prod_{1\le i<j\le d}  (\kappa_j-\kappa_i)$ and $c(N,d)>0$.
\end{proposition}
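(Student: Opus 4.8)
The plan is to reduce $J(t,\kappa)$ to a divided difference of $P_b'$ and then invoke the Hermite--Genocchi integral formula. Writing $s_i=t+\kappa_i$ and $\gamma'(s)=(1,s,s^2/2!,\dots,s^{d-2}/(d-2)!,P_b'(s))^T$, the matrix with columns $\gamma'(t+\kappa_1),\dots,\gamma'(t+\kappa_d)$ becomes, after pulling the constants $1/\ell!$ out of its first $d-1$ rows, the matrix whose first $d-1$ rows are the powers $(s_i^\ell)_{i=1}^d$, $\ell=0,\dots,d-2$, and whose last row is $(P_b'(s_i))_{i=1}^d$. The classical identity expressing such a determinant as $V(s_1,\dots,s_d)$ times the divided difference $[s_1,\dots,s_d]P_b'$ of order $d-1$, together with the translation invariance $V(s_1,\dots,s_d)=V(\kappa)$, gives
\[
J(t,\kappa)=\Big(\prod_{\ell=0}^{d-2}\tfrac{1}{\ell!}\Big)\,|V(\kappa)|\,\big|[s_1,\dots,s_d]P_b'\big|.
\]
(If $N<d$ then $P_b^{(d)}\equiv 0$ and both sides of \eqref{ineq1} vanish, so assume $N\ge d$ and set $n=N-d$, so $\deg P_b^{(d)}\le n$.) It therefore suffices to prove $\big|[s_1,\dots,s_d]P_b'\big|\ge c(N,d)\max_{1\le j\le d}|P_b^{(d)}(s_j)|$ on suitably chosen intervals.

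Since $(P_b')^{(d-1)}=P_b^{(d)}$, the Hermite--Genocchi formula yields
\[
[s_1,\dots,s_d]P_b'=\int_{\Sigma_{d-1}}P_b^{(d)}\Big(\sum_{i=1}^d t_i s_i\Big)\,d\sigma(t),
\]
where $\Sigma_{d-1}=\{t\in\bbR^d:\ t_i\ge 0,\ \sum_i t_i=1\}$ carries a positive measure $d\sigma$ of finite total mass (a constant depending only on $d$), and $\sum_i t_i s_i\in[s_1,s_d]$ for $t\in\Sigma_{d-1}$ since $s_1<\dots<s_d$. Hence, \emph{provided} $P_b^{(d)}$ has constant sign on $[s_1,s_d]$, there is no cancellation and $\big|[s_1,\dots,s_d]P_b'\big|=\int_{\Sigma_{d-1}}\big|P_b^{(d)}(\sum_i t_i s_i)\big|\,d\sigma(t)$.

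This dictates the choice of the $I_n$: take the intervals cut out by the real zeros of $P_b^{(d)}$ together with the real zeros of $(P_b^{(d)})'$; since $\deg P_b^{(d)}\le n$ there are $C(N,d)$ of them, and on each $I_n$ the polynomial $P_b^{(d)}$ has constant sign and $|P_b^{(d)}|$ is monotone. The one-variable fact I would use on each $I_n$ is a Chebyshev-type comparison: if $\psi$ has degree $\le n$, constant sign, and $|\psi|$ non-decreasing on $[a,b]$, then $|\psi(x)|\ge 4^{-n}\big(\tfrac{x-a}{b-a}\big)^n|\psi(b)|$ for $a\le x\le b$ — proved by mapping $[a,x]$ affinely onto $[-1,1]$, applying the Chebyshev growth inequality $|\psi(b)|\le\|\psi\|_{L^\infty[a,x]}\,|T_n(y)|$ with $|T_n(y)|\le(2y)^n$ for $y\ge 1$, and using monotonicity to identify $\|\psi\|_{L^\infty[a,x]}=|\psi(x)|$; the non-increasing case is symmetric, with $\big(\tfrac{b-x}{b-a}\big)^n|\psi(a)|$ on the right.

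To finish, fix $I_n$ and $s_1<\dots<s_d$ in it; say $P_b^{(d)}\ge 0$ and $|P_b^{(d)}|$ is non-decreasing there (the remaining sign/monotonicity cases are handled symmetrically), so $\max_j|P_b^{(d)}(s_j)|=P_b^{(d)}(s_d)$. Since $\sum_i t_i s_i-s_1=\sum_i t_i(s_i-s_1)\ge t_d(s_d-s_1)$, the comparison estimate gives $P_b^{(d)}(\sum_i t_i s_i)\ge 4^{-n}t_d^{\,n}P_b^{(d)}(s_d)$; integrating over $\Sigma_{d-1}$ and using that the Dirichlet integral $\int_{\Sigma_{d-1}}t_d^{\,n}\,d\sigma(t)$ is a positive constant depending only on $n$ and $d$ yields $\big|[s_1,\dots,s_d]P_b'\big|\ge c(n,d)\,P_b^{(d)}(s_d)$, and combining with the first displayed identity proves \eqref{ineq1} with an explicit $c(N,d)$. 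The step I expect to be the crux is the passage from constant sign to a lower bound comparable to the \emph{full} maximum of $|P_b^{(d)}|$ at the nodes: Hermite--Genocchi makes the sign issue transparent, but recovering $|P_b^{(d)}(s_d)|$ rather than merely, say, the value of $|P_b^{(d)}|$ at the midpoint of $[s_1,s_d]$ is exactly what forces both the subdivision at the critical points of $P_b^{(d)}$ and the use of the Chebyshev inequality.
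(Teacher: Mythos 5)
Your proof is correct, and although it shadows the paper's overall architecture --- the same decomposition of $\bbR$ at the zeros of $\phi^{(d)}$ and $\phi^{(d+1)}$, and a representation of $J(t,\kappa)$ as $|V(\kappa)|$ times an integral of $\phi^{(d)}$ against a nonnegative kernel supported on $[s_1,s_d]$ --- it replaces both of the paper's quantitative ingredients. The paper derives the kernel representation by hand (Lemma \ref{Psiest}, an iterated reduction following \cite{bos2}), together with the sup bound $\Psi\le V/(s_d-s_1)$ and the total mass $\int\Psi\,du=c_dV(\kappa)$, and then invokes the polynomial sublevel-set estimate (Lemma \ref{polynomialsublevelest}) to show $\phi^{(d)}(u)\ge\varepsilon\,\phi^{(d)}(s_d)$ off a set of measure at most $c_d(s_d-s_1)/2$; combining the three gives \eqref{ineq1}. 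You instead quote the divided-difference determinant identity plus Hermite--Genocchi (whose simplex kernel is, up to normalization, the same $\Psi$), and replace the sublevel estimate by the pointwise Chebyshev growth bound $|\psi(x)|\ge 4^{-n}\big(\tfrac{x-a}{b-a}\big)^n|\psi(b)|$ for sign-constant polynomials of degree $n\le N-d$ with $|\psi|$ monotone, integrated against the positive Dirichlet moment $\int_{\Sigma_{d-1}}t_d^{\,n}\,d\sigma$. Your route is shorter, needs neither the sup bound on the kernel nor its exact total mass (only positivity and one moment), and produces explicit constants such as $4^{-(N-d)}$ times a Beta-type factor; the paper's route proves its representation from scratch, and its sublevel-set lemma is the more flexible tool (it needs only nonvanishing of $p$ on $(a,b)$, not monotonicity), though in the end both arguments use the zeros of $\phi^{(d+1)}$ in the decomposition precisely so that the nodal maximum sits at an endpoint of $[s_1,s_d]$. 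Your side checks are all sound: the degenerate case $N<d$ (both sides of \eqref{ineq1} vanish), the translation invariance $V(s_1,\dots,s_d)=V(\kappa)$, the identity $\|\psi\|_{L^\infty[a,x]}=|\psi(x)|$ from monotonicity, and $T_n(y)\le(2y)^n$ for $y\ge1$, which yields exactly the stated $4^{-n}$.
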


We begin by proving an auxiliary lemma where the polynomial assumption is not used.

\begin{lemma}\label{Psiest} 
Let  $\phi \in C^d(\bbR)$ and let 
 $J_d (s_1,\dots,s_d; \phi)$ denote the determinant of
the $d\times d$ matrix with the $j$-th column 
$(1, s_j,\dots, s_j^{d-2}/(d-2)!, \phi'(s_j))^T$. Then for 
$-\infty < s_1 < \cdots < s_d < \infty$,
\begin{equation} \label{integ}
J_d (s_1,\dots,s_d; \phi) = \int_{s_1}^{s_d} \phi^{(d)}(u)
\Psi(u; s_1,\dots,s_d) du
\end{equation}
%
where $\Psi\equiv \Psi_d$ satisfies
\Be\label{psiestimate} 0\le \Psi(u; s_1,\dots, s_d) \le \Big|\frac{V(s_1, \dots, s_d)} {s_d-s_1}\Big| 
\quad \text{ for all }  u \in [s_1, s_d]. \Ee
%
\end{lemma}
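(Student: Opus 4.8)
The plan is to prove the integral identity \eqref{integ} together with an explicit determinantal formula for $\Psi\equiv\Psi_d$, and then to read \eqref{psiestimate} off that formula.

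\emph{Step 1: the integral representation.} Since $\phi\in C^d$, Taylor's formula with integral remainder, applied to $\phi'$ and expanded about $s_1$, produces for $x\in[s_1,s_d]$ a polynomial $P$ of degree $\le d-2$ with $\phi'(x)=P(x)+\frac1{(d-2)!}\int_{s_1}^{x}(x-u)^{d-2}\phi^{(d)}(u)\,du$. Substituting $x=s_j$ into the last row of the matrix defining $J_d$, the row $(P(s_j))_j$ is a linear combination of the rows $(s_j^k/k!)_j$, $0\le k\le d-2$, and so drops out of the determinant; rewriting $\int_{s_1}^{s_j}$ as $\int_{s_1}^{s_d}$ by inserting $(s_j-u)_+^{d-2}$ and using multilinearity of the determinant in the last row (with Fubini), one obtains \eqref{integ} with $\Psi_d(u;s_1,\dots,s_d)$ equal to the determinant of the $d\times d$ matrix whose $j$-th column is $(1,s_j,\dots,s_j^{d-2}/(d-2)!,(s_j-u)_+^{d-2}/(d-2)!)^{T}$. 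From this $\Psi_d(u;s)=0$ when $u\ge s_d$ (last row zero) and when $u<s_1$ (the last row is then the evaluation at the $s_j$ of a polynomial of degree $d-2$, hence lies in the span of the first $d-1$ rows), so $\supp\Psi_d(\cdot;s)\subseteq[s_1,s_d]$.

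\emph{Step 2: reduction to a divided difference, and non-negativity.} Factoring the factorials out of the rows and using the identity $\det\big((s_j^{k})_{0\le k\le d-2};(g(s_j))_j\big)=V(s_1,\dots,s_d)\,[s_1,\dots,s_d]g$ (with $V(s)=\prod_{1\le i<j\le d}(s_j-s_i)$ and $[\,\cdot\,]$ the $(d-1)$st divided difference) gives $\Psi_d(u;s)=c_d\,V(s_1,\dots,s_d)\,[s_1,\dots,s_d](\cdot-u)_+^{d-2}$ with $c_d=\big((d-2)!\prod_{k=0}^{d-2}k!\big)^{-1}$, and $V(s)>0$ since $s_1<\dots<s_d$. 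For non-negativity I would note that the mean value theorem for divided differences gives $J_d(s;\phi)=\big(\prod_{k=0}^{d-2}k!\big)^{-1}V(s)[s_1,\dots,s_d]\phi'=\big((d-1)!\prod_{k=0}^{d-2}k!\big)^{-1}V(s)\,\phi^{(d)}(\xi)$ for some $\xi\in(s_1,s_d)$, hence $J_d(s;\phi)\ge0$ whenever $\phi^{(d)}\ge0$; choosing $\phi^{(d)}$ a nonnegative mollifier concentrating at a given $u_0\in(s_1,s_d)$ and passing to the limit in \eqref{integ} yields $\Psi_d(u_0;s)\ge0$, using that $\Psi_d(\cdot;s)$ is continuous. (Alternatively, non-negativity follows by induction from the recursion in Step 3.)

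\emph{Step 3: the upper bound.} The Leibniz rule for divided differences, $[s_1,\dots,s_d](fg)=\sum_{i=1}^d[s_1,\dots,s_i]f\,[s_i,\dots,s_d]g$, applied to $(\cdot-u)_+^{d-2}=(\cdot-u)\cdot(\cdot-u)_+^{d-3}$ in its two groupings and combined so as to cancel the order-$(d-1)$ divided difference of $(\cdot-u)_+^{d-3}$, produces the de Boor--Cox type recursion
\[
\Psi_d(u;s_1,\dots,s_d)=\frac{c_d}{c_{d-1}(s_d-s_1)}\Big[(u-s_1)\Big(\prod_{l=1}^{d-1}(s_d-s_l)\Big)\Psi_{d-1}(u;s_1,\dots,s_{d-1})+(s_d-u)\Big(\prod_{l=2}^{d}(s_l-s_1)\Big)\Psi_{d-1}(u;s_2,\dots,s_d)\Big],
\]
with $c_d/c_{d-1}=\big((d-2)(d-2)!\big)^{-1}$; for $u\in[s_1,s_d]$ both bracketed terms are nonnegative, which re-proves non-negativity by induction. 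For the bound, $\Psi_{d-1}(\cdot;s_1,\dots,s_{d-1})$ is supported in $[s_1,s_{d-1}]$, so the inductive hypothesis $\Psi_{d-1}\le V(s_1,\dots,s_{d-1})/(s_{d-1}-s_1)$ gives $(u-s_1)\Psi_{d-1}(u;s_1,\dots,s_{d-1})\le V(s_1,\dots,s_{d-1})$ for all $u\in[s_1,s_d]$, and $\big(\prod_{l=1}^{d-1}(s_d-s_l)\big)V(s_1,\dots,s_{d-1})=V(s_1,\dots,s_d)$; handling the second term symmetrically yields $\Psi_d(u;s)\le\frac{2c_d}{c_{d-1}}\cdot\frac{V(s_1,\dots,s_d)}{s_d-s_1}$. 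Since $2c_d/c_{d-1}=2\big((d-2)(d-2)!\big)^{-1}\le1$ for $d\ge4$, this is exactly \eqref{psiestimate} for $d\ge4$, and the cases $d=2,3$ are immediate from the explicit formula ($\Psi_2\equiv\mathbf{1}_{[s_1,s_2)}$, and $\Psi_3(\cdot;s)$ is the piecewise-linear function on $[s_1,s_3]$ with peak $(s_2-s_1)(s_3-s_2)=V(s)/(s_3-s_1)$ at $u=s_2$). I expect this last step to be the only real obstacle: a $d$-dependent constant in the upper bound is automatic, but getting precisely $|V(s)/(s_d-s_1)|$ relies either on the fact that the prefactor $c_d/c_{d-1}$ decays fast enough to absorb the loss in the crude induction once the low-dimensional cases are checked by hand, or, alternatively, on identifying $\Psi_d$ with a constant multiple of the normalized B-spline with knots $s_1,\dots,s_d$ and invoking the standard sup-norm bound for B-splines.
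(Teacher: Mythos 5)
Your proposal is correct, but it takes a genuinely different route from the paper's. The paper proves \eqref{integ} by repeatedly applying the fundamental theorem of calculus together with the vanishing of determinants having two equal columns, which reduces $J_d(\,\cdot\,;\phi)$ to an iterated integral of $J_{d-1}(\,\cdot\,;\phi')$ over regions of interlacing points; iterating down to $J_2$ exhibits $\Psi(u;s_1,\dots,s_d)$ as the Lebesgue measure of an explicit set $\sG_u$ of interlacing configurations. Nonnegativity is then automatic, and the upper bound follows in one step, uniformly in $d$, by enclosing $\sG_u$ in a product of boxes whose volume, after rearranging the factors, is exactly $|V(s_1,\dots,s_d)|/(s_d-s_1)$; the same representation also gives at once $\int_{s_1}^{s_d}\Psi\,du=(2!\cdots(d-1)!)^{-1}V$, which the paper needs in the proof of Proposition \ref{jacest}. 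You instead obtain \eqref{integ} by a Peano--kernel argument (Taylor's formula with integral remainder in the last row plus multilinearity), identify $\Psi$ as a constant multiple of $V(s)$ times the divided difference $[s_1,\dots,s_d](\cdot-u)_+^{d-2}$, i.e.\ essentially a B-spline, prove nonnegativity via the mean value theorem for divided differences and a mollifier limit, and derive the upper bound from a de Boor--Cox type recursion by induction on $d$, the induction closing because your prefactor $2c_d/c_{d-1}=2/((d-2)(d-2)!)$ is at most $1$ for $d\ge 4$, with $d=2,3$ checked by hand. This is a complete argument: the Leibniz (Steffensen) formula and the divided-difference recursion are purely algebraic identities in the nodal values, so the non-smoothness of the truncated power is harmless, your recursion and the constants check out, and the base cases are computed correctly. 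In comparison, the paper's volumetric representation yields the sharp constant directly with no case analysis and no spline machinery, while your route provides a closed-form expression for $\Psi$ and a link to standard B-spline bounds, at the cost of the low-dimensional base cases and somewhat more bookkeeping with constants.
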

\begin{proof} 
 We
will follow the arguments in \cite{bos2}.
We first show that
\Be\label{JdfromJd-1}
J_d (s_1,\dots,s_d; \phi) \,=\,\int_{s_1}^{s_2}\cdots\int_{s_{d-1}}^{s_d} J_{d-1}(\sigma_1,\dots, \sigma_{d-1}; \phi')\,
d\sigma_{d-1}\cdots d\sigma_1\,.
\Ee
To prove this we first note 
that 
 since a determinant is
zero if two columns are equal, we have
\begin{align*} \label{}
&J_d (s_1,\dots,s_d; \phi) 
= -\int_{s_1}^{s_2}
\partial_1 J_d (\sigma_1, s_2, \dots,s_d; \phi) d\sigma_1
\\&=(-1)^{d-1} \int_{s_1}^{s_2}\cdots \int_{s_{d-1}}^{s_d}
\partial_{d-1}\cdots \partial_1 J_d (\sigma_1, \dots, \sigma_{d-1}, s_d; \phi)
\, d\sigma_{d-1}\cdots d\sigma_1\,.
\end{align*}
Now $\partial_{d-1}\cdots \partial_1 J_d (\sigma_1, \dots,
\sigma_{d-1}, s_d; \phi)$ is the determinant of a matrix with the
first row $(0,\dots,0,1)$, and one easily checks that
$$ \partial_{d-1}\cdots \partial_1 J_d (\sigma_1, \dots, \sigma_{d-1}, s_d; \phi)
= (-1)^{d-1} J_{d-1} (\sigma_1, \dots, \sigma_{d-1}; \phi') .$$
Combining the two previous displays yields \eqref{JdfromJd-1}.

We now wish to iterate this formula.
It is convenient to denote by  $x^m = (x^m_1,\dots,x_m^m)$ 
 a point in $\bbR^m$ with
$x^m_1\le\dots\le x_m^m$ (i.e with increasing coordinates). We shall set $(s_1,\dots, s_d)=(x_1^d,\dots,x^d_d)=x^d$.
For $1\le k\le d-2$ define
$$\sH_{d-k}(x^{d-k+1})= 
\{ x^{d-k}\in \bbR^{d-k}: \, x_j^{d-k+1} \le
x_j^{d-k}\le x_{j+1}^{d-k+1}, ~~ 1\le j\le d-k \}.$$
Note that  if the coordinates of $x^{d-k+1}_j$ are increasing in $j$ then for every 
$x^{d-k}\in \sH_{d-k}$ the coordinates of $x^{d-k}$ are increasing.
The formula  \eqref{JdfromJd-1}
can be rewritten as
$$ J_d (x^d; \phi) = \int_{\sH_{d-1}(x^d)}  J_{d-1} (x^{d-1} ;
\phi') \,dx^{d-1} . $$
Induction  gives, for $1\le k\le d-2$
$$ J_d (x^d; \phi) = \int_{\sH_{d-1}(x^d)} \cdots \int_{\sH_{d-k} (x^{d-k+1})}
J_{d-k}(x^{d-k+1}; \phi^{(k)}) \, dx^{d-k}\cdots dx^{d-1}\,.
$$
We also have
$$ J_2(x^2; \phi^{(d-2)}) = \phi^{(d-1)}(x^2_2)- \phi^{(d-1)}(x^2_1)=
\int_{x_1^2}^{x_2^2} \phi^{(d)}(u) \,du\,.$$
Hence, if 
\begin{multline*}
\sG_u(x^d)= \{ (x^2, x^3, \cdots, x^{d-1}): \\
x^2_1 \le u\le x^2_2,\, x^{d-k} \in \sH_{d-k}(x^{d-k+1}), \, 1\le k\le d-2\}
\end{multline*}
and 
$$ \Psi(u; x^d): = \int_{\sG_u(x^d)} dx^2 dx^3 \cdots dx^{d-1}$$
we get 
$$J_d(x^d;\phi) = \int_{x_1^d}^{x_d^d} \phi^{(d)}(u) \Psi(u;x^d) \,du\,.$$
If $x^d=(s_1,\dots, s_d)$ this is \eqref{integ}.

Observe that, for each $u \in [x_1^d, x_d^d]$, the set  $\sG_u(x^d) $ is contained in
the rectangular box
$$ B_{2}(x^d)\times  \cdots \times B_{d-1}(x^d) $$
where
$$ B_{d-k}(x^d) = \{ x^{d-k}\in \bbR^{d-k}: ~ x_j^d \le
x_j^{d-k}\le x_{j+k}^d, ~~ 1\le j\le d-k \} .$$
Since
$$ \vol_{d-k}(B_{d-k}) = \prod_{j=1}^{d-k} (x_{j+k}^d - x_j^d) $$
it follows by rearranging the factors that
\begin{align*}
 \Psi&(u;x^d) = 
\meas(\sG_u(x^d)) 
\le \prod_{k=2}^{d-1} \vol_k(B_k) \\
&= \prod_{2\le i \le d-1}(x_d^d -x_i^d)  \prod_{1\le i<j\le d-1}(x_j^d-x_i^d)
= (x_d^d-x_1^d)^{-1} V(x_1^d, \cdots, x_d^d) .
 \end{align*}
This proves  \eqref{psiestimate}.
\end{proof}

We also need the following observation on polynomials.
\begin{lemma} \label{polynomialsublevelest}
Let  $p$ be  a real-valued
polynomial of degree $\leq N$ and $|p(t)|>0$ on $(a,b)$.
Then, for every $\varepsilon \in (0, 2^{-N})$,
\begin{equation}\label{pest}
\big|\{ t\in (a,b):|p(t)|<\varepsilon |p(b)|\}\big| \leq 2N \varepsilon
^{\frac{1}{2N} }(b-a).
\end{equation}
\end{lemma}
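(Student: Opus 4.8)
The plan is to reduce everything to the behavior of $p$ near the roots of $p$ that lie closest to $b$ from inside $(a,b)$, and to exploit the fact that a polynomial of degree $N$ cannot grow too slowly near its zeros. Since $|p(b)|>0$ and $p$ has at most $N$ real roots, I would first factor $p(t)=c\prod_{j=1}^{N}(t-z_j)$ over $\bbC$ (with the $z_j$ allowed to be complex, and $c\neq 0$ since $p$ is not identically zero on $(a,b)$). For $t\in(a,b)$ write
\[
\frac{|p(t)|}{|p(b)|}=\prod_{j=1}^{N}\frac{|t-z_j|}{|b-z_j|}.
\]
If $|p(t)|<\varepsilon|p(b)|$, then at least one factor must be small: there is some index $j$ with $|t-z_j|/|b-z_j|<\varepsilon^{1/N}$, i.e. $|t-z_j|<\varepsilon^{1/N}|b-z_j|$. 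Since $t,b\in(a,b)$ and $\Re z_j$ may or may not lie in $(a,b)$, one has in any case $|\Re(t-z_j)|\le|t-z_j|<\varepsilon^{1/N}|b-z_j|$, while $|b-z_j|\le |b-\Re z_j|+|\Im z_j|$; a short case analysis (using that $|p|>0$ on $(a,b)$, so no $z_j$ with real part in $(a,b)$ can produce too strong a constraint) shows $|b-z_j|\le C_N(b-a)$ is \emph{not} what we want — rather we want the cheap bound $|b-z_j|$ is at most roughly the distance across, which fails in general.

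So instead I would argue more carefully using only the real zeros. Order the real roots of $p$ in $(a,b]$, say $a<r_1\le\cdots\le r_M\le b$ (with multiplicity), $M\le N$. On the interval $(r_M,b)$ — where $p$ has no zero — I claim $|p(t)|\ge |p(b)|\,\big(\tfrac{t-r_M}{b-r_M}\big)^{N}$ fails too, but a correct and clean statement is available: between consecutive zeros $p$ is monotone in a controlled way after we localize. The cleanest route, and the one I would actually carry out, is: the set $S_\varepsilon=\{t\in(a,b):|p(t)|<\varepsilon|p(b)|\}$ is a union of at most $N+1$ intervals (since $|p(t)|=\varepsilon|p(b)|$ has at most $N$ solutions), and it suffices to bound each such component. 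On a component $(\alpha,\beta)$ we have $|p|\le \varepsilon|p(b)|$ throughout, so by the mean value theorem and the bound $|p(b)|\ge\text{(something)}$ we get control — but we still need a lower bound for $|p(b)|$ in terms of the component length. The genuinely useful tool here is the classical estimate: if $q$ is a polynomial of degree $\le N$ and $E\subset[\alpha,\beta]$, then $\sup_{[\alpha,\beta]}|q|\le \big(\tfrac{4(\beta-\alpha)}{|E|}\big)^{N}\sup_E|q|$ (a consequence of Remez's inequality / Chebyshev extremal properties).

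Therefore the main step I would execute is: apply the Remez-type inequality to $p$ on $[a,b]$ with $E=S_\varepsilon$. Remez's inequality gives
\[
\sup_{[a,b]}|p|\;\le\;\Big(\frac{4(b-a)}{|S_\varepsilon|}\Big)^{N}\sup_{S_\varepsilon}|p|\;\le\;\Big(\frac{4(b-a)}{|S_\varepsilon|}\Big)^{N}\,\varepsilon\,|p(b)|.
\]
Since $|p(b)|\le \sup_{[a,b]}|p|$, this yields $1\le \big(4(b-a)/|S_\varepsilon|\big)^{N}\varepsilon$, hence $|S_\varepsilon|\le 4(b-a)\varepsilon^{1/N}$. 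This is slightly weaker in the exponent than the claimed $\varepsilon^{1/(2N)}$ and slightly different in the constant, so to match \eqref{pest} exactly I would instead use the sharper form of Remez's inequality with the Chebyshev polynomial $T_N$, namely $\sup_{[a,b]}|p|\le |T_N\!\big(\tfrac{2(b-a)}{|S_\varepsilon|}-1\big)|\cdot\sup_{S_\varepsilon}|p|$ when $|S_\varepsilon|\le b-a$, and then bound $|T_N(1+2x)|\le (1+4\sqrt{x}+\dots)^{?}$ — more simply, use $|T_N(y)|\le (2|y|)^N$ for $|y|\ge1$ to recover $\varepsilon^{1/N}$, or split into the two halves of the component around the extremal point to pick up the factor $2$ in $2N$ and the power $\tfrac1{2N}$. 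The honest summary: the core is Remez's inequality applied to the sublevel set as the "good set'' $E$; the only fiddly part is tracking constants to land on exactly $2N\varepsilon^{1/(2N)}(b-a)$, and I expect that bookkeeping — together with justifying the elementary Remez/Chebyshev estimate inline if the paper does not wish to cite it — to be the main obstacle rather than any conceptual difficulty.
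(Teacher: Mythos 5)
Your core argument (Remez's inequality applied with $E=S_\varepsilon$) is valid, but note that you have the comparison of exponents backwards: since $\varepsilon<1$, the bound $|S_\varepsilon|\le 4\varepsilon^{1/N}(b-a)$ you derive is \emph{stronger}, not weaker, than the claimed $2N\varepsilon^{1/(2N)}(b-a)$ in the exponent, so no sharper Chebyshev-form bookkeeping is needed at all: for $N\ge 2$ one has $4\le 2N$ and you are done, while for $N=1$ the claim is trivial whenever $2\varepsilon^{1/2}\ge 1$ (i.e.\ $\varepsilon\ge 1/4$, since then the right-hand side of \eqref{pest} exceeds $b-a$) and follows from your bound when $\varepsilon\le 1/4$. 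So your route works, but it differs from the paper's, and its only real cost is importing Remez's inequality as a black box (plus the trivial remark that $|S_\varepsilon|>0$ may be assumed). The paper's proof is elementary and self-contained, and it is essentially the argument you abandoned at the start: factor $p$ over $\bbR$ into $N_1$ linear factors $t-c_i$ and $N_2$ real quadratic factors $(t-d_j)^2+e_j^2$ with $N_1+2N_2\le N$ (this is how the complex roots that worried you are handled — conjugate pairs are grouped into quadratics); if $|p(t)|<\varepsilon|p(b)|$ then by pigeonhole some single factor satisfies $|p_i(t)|<\varepsilon^{1/(N_1+N_2)}|p_i(b)|$ or $|q_j(t)|<\varepsilon^{1/(N_1+N_2)}|q_j(b)|$, and each such set is controlled by the elementary estimate $|\{t\in(a,b):|t-c|<\delta|b-c|\}|\le 2\delta(b-a)$ for $\delta<1/2$, proved by a short case analysis on the location of $c$; the quadratic factors cost a square root, which is precisely where the exponent $\tfrac1{2N}$ and the constant $2N$ in \eqref{pest} come from. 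In short: your Remez approach buys a better exponent $\varepsilon^{1/N}$ at the price of citing a nontrivial classical inequality, whereas the paper trades the exponent down to $\varepsilon^{1/(2N)}$ to keep the proof entirely elementary.
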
\begin{proof}
To show \eqref{pest} we check  that for $c\in \bbR$ and
$0<\delta <1/2$ we have
\begin{equation}\label{pest3}
|\{t\in (a,b): |t-c|< \delta |b-c|\}| \leq 2\delta (b-a).
\end{equation}
If $b<c$ then $|t-c|>|b-c|$ if $t\in [a,b]$, so $  \{ t\in
(a,b):|t-c|<\delta |b-c|\}=\emptyset$ since $\delta <1/2$. If
$a\leq c\leq b$ then
$$ |\{ t\in (a,b):|t-c|<\delta |b-c|\}| \leq
2\delta |b-c|\leq 2\delta (b-a).
$$
If $c<a<b$ and $|a-c|\leq b-a$ then
$$
|\{t\in (a,b):|t-c|<\delta |b-c|\}
| \leq \delta |b-c|\leq 2\delta
(b-a).
$$
And if $c<a<b$ and $|a-c|>b-a$, then if $t\in [a,b]$ we have
$|t-c|\geq |a-c|\geq (|b-a|+|a-c|)/2=|b-c|/2$, so $ \{t\in
(a,b):|t-c|<\delta |b-c|\}=\emptyset$ since $\delta <1/2$. This
gives \eqref{pest3}.

Moving towards \eqref{pest}, we may normalize the leading coefficient  and write
$p(t)=\prod_{i=1}^{N_1}p_i(t) \prod_{j=1}^{N_2} q_j(t)$
where $p_i(t)=t-c_i$, $q_j(t) = (t-d_j )^2 +e_j^2$, $c_i, d_j, e_j\in \bbR$, and 
 $N_1+2N_2\leq N$. To establish \eqref{pest} 
we show that
\begin{equation}\label{pest2}
|\{ t\in (a,b):|q(t)|<\delta |q(b)|\}| \leq 2\sqrt{\delta} (b-a)
\end{equation}
if $0<\delta <1/2$ and $q(t)=t-c$  or $q(t)=(t-c)^2 +d^2$. The
case $q(t)=t-c$ follows from \eqref{pest3}. If $q(t)=(t-c)^2 +d^2$
then
$$
\{t\in (a,b):q(t)<\delta q(b)\}\subset \{t\in [a,b]: |t-c|\leq
\sqrt{\delta}|b-c|\}
$$
so
$$
|\{t\in (a,b):q(t)< \delta q(b)\}| \leq 2\sqrt{\delta}(b-a)
$$
by \eqref{pest3}. 
This gives \eqref{pest2}.
Finally $\{ t\in (a,b):|p(t)|<\varepsilon |p(b)|\}$ 
is contained in the union of the
$N_1$ sets 
$\{ t\in (a,b):|p_i(t)|<\varepsilon^{1/(N_1+N_2)}
|p_i(b)|\}$ 
and the $N_2$ sets 
$\{ t\in (a,b):|q_j(t)|<\varepsilon^{1/(N_1+N_2)} |q_j(b)|\}$ 
and thus, if $\eps<2^{-N_1-N_2}$ 
$$\big|\{ t\in (a,b):|p(t)|<\varepsilon |p(b)|\}\big| \leq 
\big(2N_1 \varepsilon^{\frac{1}{N_1+N_2} }+
2N_2 \varepsilon^{\frac{1}{2(N_1+N_2)} }\big)
(b-a)
$$
This proves \eqref{pest}.
\end{proof}

\begin{proof}[Proof of Proposition \ref{jacest}]
With $\phi=P_b$ fixed choose the $I_n$ such that $\phi ^{(d)}$ and
$\phi^{(d+1)}$ are nonzero on the interior of each $I_n$. 
We assume without loss of generality that
 $\phi ^{(d)},\,\phi^{(d+1)}>0$ on the interior of $I_n$.
If we
put $s_j = t+ \kappa_j$, then it follows by Lemma \ref{Psiest}
that
\begin{equation}\label{ineq2}
J(t,\ka)= |J_d (s_1,\cdots,s_d; \phi)| = \Big| \int_{s_1}^{s_d}
\phi^{(d)}(u) \Psi(u) du \Big|
\end{equation}
for some nonnegative function $\Psi(u)=\Psi(u;s_1,\dots, s_d)$ which satisfies
\begin{equation}\label{PsiV}
 \Psi(u) \le V(s_1,\dots, s_d)
/(s_d-s_1) .
 \end{equation}
Note that $V(s_1,\dots, s_d)=V(\ka_1,\dots,\ka_d)$.
By applying \eqref{integ} with $\phi(t)= t^d/(d!)$, we get
$$ \int_{s_1}^{s_d}\Psi (u)\,du = c_d V(\ka)$$
where $c_d = (2! \cdots (d-1)!)^{-1}$.

To see \eqref{ineq1} we use this fact and \eqref{PsiV}. Thus we
have
\begin{align}\label{ineq6}
\int_{[s_1, s_d ]\setminus E}\Psi (u)\,du &= \int_{s_1}^{s_d}\Psi
(u)\,du - \int_{E}\Psi (u)\,du \\
&\geq {c_d \,V(\ka)} - |E| \frac{V(\ka)}{s_d -s_1}
\end{align}
if $E\subset [s_1, s_d ]$. Choose $\varepsilon =\varepsilon (d,N)$
so small that $2N\varepsilon^{\frac{1}{2N}} \leq c_d/2$. Now
assume that $s_1=t+\ka_1\in I_n$, $s_d
= t+\kappa_d \in I_n$. With $$E=\{u\in [s_1, s_d ]: \, \phi
^{(d)}(u)<\varepsilon \phi ^{(d)}(t+\kappa_d )\}$$ we have $|E|<
(s_d -s_1 ) c_d/2$ by Lemma \ref{polynomialsublevelest} and our choice of
$\varepsilon$.

Hence, by \eqref{ineq2} and \eqref{ineq6}, we have
\begin{align*}
J(t,\ka) &\geq \int_{{[s_1, s_d ]\setminus E}}\phi ^{(d)}(u)\,\Psi
(u)\,du \\
&\ge \varepsilon \, \phi ^{(d)}(t+\kappa_d ) \, V(\ka)\, c_d /2 \\
&= \eps \frac{c_d}2\,V(\ka)\, \max\{|\phi ^{(d)}(t+\kappa_j)|: ~ 1\le j\le d \},
\end{align*}
giving \eqref{ineq1} as desired. Here we put $c(N,d)= \varepsilon \,
c_d /2$.
\end{proof}

\begin{proof} [\bf Proof of Theorem \ref{polthm}]
Fix a polynomial $\phi$, of degree $N$. If $d<N$ then the affine arclength measure is identically $0$ and the assertion trivially holds with 
$C(N)=0$. 
Now  assume $N\ge d$. Let $I$ be an interval
on which the inequality  \eqref{ineq1} holds, 
and $\phi^{(d)}$ and $\phi^{(d+1)}$ do   not change sign.
Pick a subinterval $I_0$ on which $\phi^{(d)}$ does not vanish.
Denote by $\fC$ the class of simple curves given by  $\phi$, on $I_0$
or on subintervals of $I_0$.
We notice that the offspring curves
are affine images of the original curves (\cf. the proof of Lemma 3.1 in 
\cite{bos2}). The bounded multiplicity hypothesis  (when the curve is restricted to suitable subintervals)  is  discussed in \cite{DM1}, \cite {DeW}.
Theorem \ref{mainthm} gives the desired conclusion on the interval $I_0$ with no reference to a nondegeneracy 
assumption.   A limiting argument gives the conclusion on the full interval $I$. Since $\phi$ is a polynomial
and by  Proposition \ref{jacest} 
 we have to apply this consideration to only a finite number of intervals.
\end{proof}

\medskip

\noi{\it Remark.} Let $\Gamma(t)= (R_1(t),\dots, R_d(t))$ where $R_i(t)=P_{1,i}(t)/P_{2,i}(t)$, $P_{1,i}, P_{2,i}$ are polynomials.
It  has been proved by Dendrinos, Folch-Gabayet and Wright \cite{DeFW}  that $\bbR$ can be decomposed into a finite 
number of intervals (depending on $d$ and the maximal degree of the polynomials involved) so that 
the crucial hypothesis
 \eqref{firstmainhyp} is satisfied on the interior of each interval. In the special case
of  rational curves of simple type (with $R_i(t)= t^i/i!$, $i=1,\dots, d-1$ and $R_d(t)= P(t)/Q(t)$, $P,Q$ polynomials)  one  can show that after a further  decomposition Observation \ref{simpleobs} applies. Thus 
Theorem
\ref{polthm} extends  to rational curves of simple type.

\section{A note on the range of the sharp $L^p\to L^q$ 
adjoint  restriction theorem for general 
 polynomial curves}\label{Drury-estimate}

Suppose $t\mapsto \gamma (t)=(P_1(t),\dots, P_d(t))$ is a polynomial curve
 in $\bbR^d$,
with the $P_i$  
of degree at most  $n$,
 and suppose that $d\lambda=wdt$ is the affine arclength measure 
on $\gamma$. Dendrinos and Wright \cite{DeW} established the critical  Fourier extension estimate
\begin{equation}\label{ineq1poly}
\|\widehat{fd\lambda}\|_q \leq C(n,p)\,\|f\|_{L^p (\lambda )},\quad
\frac{1}{p}+\frac{d(d+1)}{2q}=1
\end{equation}
in the range 
$1\le p<d+2$
(the range obtained by Christ \cite{Ch} in the nondegenerate case).
Much earlier Drury \cite{D} had proven a restriction estimate for certain curves $(t,t^2 ,t^k)$ in dimension $3$
that was valid for $1\leq p <6$ and therefore valid for some $p$ outside of the Christ range. 
It turns out that by 
replacing two of the estimates in Drury's argument by  estimates of Dendrinos and Wright and of  Dendrinos, Laghi and Wright \cite{DLW}
one can extend Drury's result to general polynomial curves in $\bbR^3$.
Moreover using an  estimate of Stovall \cite{sto},
one can show 
\begin{proposition}
For general polynomial curves in $\bbR^d$, $d\ge 3$, the 
 Fourier extension estimate 
\eqref{ineq1poly} holds  for 
\begin{equation}\label{ineq1.6poly}
1\leq p<d+3+\frac{2(d-3)}{d^2-3d+4}\,.
\end{equation}
\end{proposition}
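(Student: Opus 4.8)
The plan is to carry out, for an arbitrary polynomial curve $(P_1,\dots,P_d)$, the two-step scheme by which Drury \cite{D} (see also \cite{D2}) obtained the range $1\le p<6$ for the curves $(t,t^2,t^k)$ in $\bbR^3$, feeding into it the uniform estimates for polynomial curves that are now available. The first step is a reduction to good intervals: using the combinatorial decomposition of Dendrinos and Wright \cite{DeW} (cf. also \cite{DeFW}) one writes $\bbR$ as the union of $C(n,d)$ intervals on each of which $\tau_\gamma$ has constant sign, the Jacobian inequality \eqref{firstmainhyp} holds with a uniform constant, and $\gamma$ may be taken (after an affine change of coordinates in $\bbR^d$ and a reparametrization) in the normalized form $\gamma(t)=(t,\gamma_2(t),\dots,\gamma_d(t))$ with $\gamma_j$ polynomial of degree $\le n$. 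It then suffices to prove \eqref{ineq1poly} on each such interval with a constant depending only on $n$ and $d$, and to sum over the finitely many intervals; uniformity in $b$ follows because every ingredient below is uniform and $C(n,d)$ depends only on $n,d$.

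On a fixed good interval, Drury's scheme couples a Fourier extension estimate for the sub-curve $\tilde\gamma=(t,\gamma_2,\dots,\gamma_{d-1})$ in $\bbR^{d-1}$, obtained by discarding the last coordinate, with an $L^{p_1}\to L^{q_1}$ estimate for convolution against affine arclength measure on $\gamma$. The first of these I replace by the uniform Fourier extension theorem of Dendrinos and Wright \cite{DeW}, valid for polynomial curves in $\bbR^{d-1}$ throughout the Christ range $1\le p_\star<(d-1)+2=d+1$; the second I replace, when $d=3$, by the universal $L^p$-improving estimate of Dendrinos, Laghi and Wright \cite{DLW} for averages along polynomial curves in the plane and in $\bbR^3$, and, for general $d$, by the corresponding estimate of Stovall \cite{sto} for polynomial curves in $\bbR^d$. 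Writing $\xi=(\xi',\xi_d)\in\bbR^{d-1}\times\bbR$, for fixed $\xi_d$ the inner integral in $\widehat{f\,d\lambda}(\xi',\xi_d)$ is the extension operator for $\tilde\gamma$ applied to $t\mapsto f(t)e^{\ic\xi_d\gamma_d(t)}w(t)$, so its $L^{q_\star}_{\xi'}$ norm is controlled by \cite{DeW}; since the resulting bound does not see $\xi_d$, the gain in the exponent range comes only after coupling it with the averaging estimate of \cite{DLW} (respectively \cite{sto}) in the $\xi_d$-direction and interpolating. Optimizing the free exponents in this coupling against the critical relation $\tfrac1p+\tfrac{d(d+1)}{2q}=1$ should produce exactly the range \eqref{ineq1.6poly}; consistently with $d^2-3d+4=(d-1)(d-2)+2$, the correction term $\tfrac{2(d-3)}{d^2-3d+4}$ vanishes for $d=3$, so the endpoint degenerates to Drury's $p<6$.

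I expect the main obstacle to be the weighted bookkeeping in the coupling together with the exponent optimization. The affine arclength density $w_\gamma$ of $\gamma$ in $\bbR^d$ does not factor through the density $w_{\tilde\gamma}$ of $\tilde\gamma$ in $\bbR^{d-1}$; on a good interval one uses \eqref{firstmainhyp} and the constant sign of $\tau_\gamma$ to bound $w_\gamma$ below by $w_{\tilde\gamma}$ times a power of $|\gamma_d^{(d)}|$ (up to uniform constants), and the mismatch must be absorbed into the function to which the averaging estimate of \cite{sto} is applied, so that that estimate is used precisely with \emph{its own} affine arclength normalization. Making the powers of the several Jacobian factors line up for this absorption, and then choosing the interpolation parameters to maximize the admissible $p$, is the delicate part; by contrast the reduction to good intervals and the appeals to \cite{DeW} in dimension $d-1$ and to \cite{DLW} are essentially invocations of the cited results. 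Finally, the estimate obtained on each good interval lies on the critical line with $p$ in the claimed range, and summing over the $C(n,d)$ intervals completes the proof.
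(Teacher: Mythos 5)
Your plan does not match the engine that actually drives this result, and the missing piece is essential. The paper's proof is Drury's \emph{bootstrap}: one assumes the extension estimate \eqref{ineq1poly} at some exponent $p_0$ on the critical line, derives from it (via H\"older and Plancherel) the bilinear bound \eqref{ineq4poly} for $\|\widehat{fd\lambda\ast g}\|_{s_0}$ with $g\in L^2$, interpolates this against the bound \eqref{ineq3poly} obtained from Stovall's averaging estimate plus Hausdorff--Young, and then substitutes $g=(fd\lambda)\ast\cdots\ast(fd\lambda)$, controlled by the Dendrinos--Wright $d$-fold \emph{convolution} estimate \eqref{ineq2poly} in $\bbR^d$ (not their extension theorem in dimension $d-1$), using the identity $(\widehat{fd\lambda})^{d+1}=\widehat{fd\lambda\ast(fd\lambda)^{\ast d}}$. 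Each pass improves $p_0$ to a $p_1$ with $1/p_1$ an affine function of $1/p_0$, and the endpoint $d+3+\tfrac{2(d-3)}{d^2-3d+4}=\tfrac{d^3-3d+6}{d^2-3d+4}$ in \eqref{ineq1.6poly} is exactly the fixed point of that recursion, reached only in the limit of infinitely many iterations (with real interpolation upgrading the restricted type bounds along the way). Your scheme is a one-shot coupling of the Christ-range theorem for the projected curve with an averaging estimate; it contains no bootstrap, and no single interpolation starting from $p_\star<d+1$ can produce a fixed-point exponent of this form. The assertion that optimizing the free parameters ``should produce exactly the range'' is precisely the step that is not, and I believe cannot be, carried out as described.

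Beyond that structural gap, the slicing step itself is not made precise and looks unworkable. For fixed $\xi_d$ the inner integral is indeed an extension operator for $\tilde\gamma$ applied to a unimodular modulation of $f$, but the resulting bound is uniform in $\xi_d$ and so gives no integrability in $\xi_d$; Stovall's theorem is a physical-space statement about convolution with $\lambda$ on $\bbR^d$, and you never say how it is to act ``in the $\xi_d$-direction'' to supply that integrability --- in the actual argument it enters multiplicatively, through Hausdorff--Young applied to $\widehat{fd\lambda\ast g}$, not through any frequency slicing. The weight mismatch you flag is also genuinely unresolved: for a general polynomial curve, even after normalization, $\tau_\gamma$ does not factor through $\tau_{\tilde\gamma}$ and $\gamma_d^{(d)}$, so the proposed lower bound of $w_\gamma$ by $w_{\tilde\gamma}$ times a power of $|\gamma_d^{(d)}|$ is not a consequence of \eqref{firstmainhyp}, and without it the appeal to the $(d-1)$-dimensional Dendrinos--Wright theorem is not with its own affine arclength normalization. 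Finally, the preliminary decomposition into good intervals is unnecessary here, since both \eqref{ineq2poly} and Stovall's bound are already global, uniform statements for polynomial curves; the real work lies in the iteration you have omitted.
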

\begin{proof}
What follows, then, is just Drury's argument run with up-to-date technology.
The required result from \S3 of \cite{DeW} (cf. \cite{Ch} for the nondegenerate case) is the $d$-fold convolution estimate
\begin{equation}\label{ineq2poly}
\begin{aligned}&\|(fd\lambda )\ast \cdots \ast (fd\lambda )
\|_r \leq C(n,t)\, 
\big(\|f\|_{L^t (\lambda )}\big)^d
\\ &\quad\text{ for } \frac{1}{t}+\frac{d-1}{2}=\frac{d+1}{2r},\quad 1\leq t<d+2 .
\end{aligned}
\end{equation}
The necessary estimate from \cite{sto} is 
\begin{equation*}
\|\lambda\ast g\|_{{\frac{d+1}{d-1}}}\leq C(n)\, \|g\|_{\frac{d^2 +d}{d^2 -d+2}},
\end{equation*}
from which it follows by the Hausdorff-Young inequality that 
\begin{equation}\label{ineq3poly}
\|\widehat{fd\lambda\ast g}\|_{\frac{d+1}{2}}\lc\ci{n}\, \|f\|_{L^{\infty}(\lambda )}\,\|g\|_{\frac{d^2 +d}{d^2 -d+2}}.
\end{equation}

Drury's argument is an iterative one. Begin by assuming that the $L^{p_0}\to L^{q_0}$ estimate \eqref{ineq1poly} holds for some $p_0$ and $q_0$
satisfying $\frac{1}{p_0}+\frac{d(d+1)}{2q_0}=1$.
We then  also have
\begin{equation}\label{ineq4poly}
\|\widehat{fd\lambda\ast g}\|_{s_0}
\leq 
C(n,p_0 ) \|f\|_{L^{p_0} (\lambda )}\,
\|g\|_2, \quad \frac{1}{s_0}=\frac{1}{q_0}+\frac{1}{2}.
\end{equation}
To see this  write $\|\widehat{fd\lambda\ast g}\|_{s_0}
=\|\widehat{fd\lambda}\,\widehat g\|_{s_0}$ and estimate this by
$\|\widehat{fd\lambda}\|_{q_0}\,\|\widehat{g}\|_2 $, using H\"older's inequality.
Now use the assumed $L^{p_0}\to L^{q_0} $ inequality and Plancherel's formula to 
get \eqref{ineq4poly}.
%
Interpolation of \eqref{ineq3poly} and \eqref{ineq4poly} gives
\begin{equation}\label{ineq6poly}
\|\widehat{fd\lambda\ast g}\|_{s} \lc\ci{n,\vth, p_0} \|f\|_{L^{a} (\lambda )}\,
\|g\|_r
\end{equation}
where
\Be\label{ineq7poly}
\Big( \frac 1s, \frac 1a,\frac 1r\Big)=(1-\vth)
\Big(\frac{1}{s_0},\frac{1}{p_0}, \frac 12\Big) +
\vth \Big( \frac{2}{d+1}, \frac{1}{\infty}, \frac{d^2-d+2}{d^2+d}\Big)
\Ee
for $0<\vth<1$.
We wish to apply this inequality with  $g$ equal to the $d$-fold convolution in \eqref{ineq2poly}.
This restricts the $r$-range to $r<\frac{d+2}{d}$ 
(corresponding to the range $t<d+2$).
A calculation shows that this restricts the range of $\vth$ in 
\eqref{ineq7poly} to 
\Be\label{thetamin}
\frac{(d-2)(d+1)d}{(d+2)(d^2-3d+4)}
=:\vth_{\text{min}} <\vth<1
\Ee
With $g= fd\la*\cdots *fd\la$ we obtain from  \eqref{ineq6poly} 
\begin{equation}\label{ineq8poly}
\|\big(\widehat{fd\lambda}\big)^{d+1}\|_{s} \lc\ci{n,\vth, p_0} 
\|f\|_{L^{a} (\lambda )}\,
\big(\|f\|_{L^t (\lambda )}\big)^d 
\end{equation}
so long as $t<d+2$, $\frac{1}{t}=\frac{d+1}{2r}-\frac{d-1}2$
and $r, a, s$ are  as in \eqref{ineq7poly} with $\vth>\vth_{\text{min}}$.
With $f=\chi_E$ this becomes 
\begin{equation*}
\|\widehat{\chi_E d\lambda}\|_{(d+1)s} \lc\ci{n,\vth, p_0} 
\,\lambda (E)^{\tfrac{1}{d+1}(\tfrac{1}{a}+\tfrac{d}{t})} ,
\end{equation*}
which gives
$$
\|\widehat{fd\lambda}\|_{q} \lc\ci{n,\vth, p_0} \,\|f\|_{L^{p,1} (\lambda )},\ 
\text{ for }\frac{1}{q} =\frac{1}{(d+1)s} \ \text{and }\ \frac{1}{p}=
\frac{1}{d+1}\big(\frac{1}{a}+\frac{d}{t}\big),
$$ where $s,a,t$ are in 
$$\Big( \frac 1s, \frac 1a,\frac 1t\Big)=(1-\vth)
\Big(\frac{1}{s_0},\frac{1}{p_0}, \frac {3-d}4\Big) +
\vth \Big( \frac{2}{d+1}, \frac{1}{\infty}, \frac 1d\Big), \quad
\vth_{\text{min}}<\vth<1\,.
$$
A little algebra shows that $p$ and $q$ satisfy 
${1}/{p}+{d(d+1)}/(2q)=1$
(for any $\vth\in (\vth_{\text{min}},1))$. 
Thus we get the  restricted strong  type version of \eqref{ineq1poly} for the exponent pair $(p,q)$.  
If  $p_1$ is the exponent $p$ corresponding to the limiting 
case $\vth_{\text{min}}$ we  obtain by real interpolation 
 the sharp $L^p(\la)\to L^q$ inequality in the open range  $1\le p<p_1$. 
Using $\vth_{\text{min}}$ in \eqref{thetamin}  we calculate that 
\begin{equation*}
\frac{1}{p_1}=\frac{8}{(d+1)(d+2)(d^2 -3d+4)}\cdot\frac{1}{p_0}+
\frac{d}{(d+1)(d+2)}.
\end{equation*}
If we define recursively 
$\frac{1}{p_{j+1}}=\frac{8}{(d+1)(d+2)(d^2 -3d+4)p_j}+\frac{d}{(d+1)(d+2)}$ 
then the sequence $\{p_0 ,p_1 ,p_2 ,\dots\}$  converges to 
$$\frac{d^3 -3d+6}{d^2 -3d+4}=d+3+\frac{2(d-3)}{d^2-3d+4}
$$
and we can conclude that  \eqref{ineq1poly} holds for $p$ in the range 
\eqref{ineq1.6poly}.
\end{proof}

\medskip


\appendix

\section{Some results from   interpolation theory} 
\label{appendixinterpol}
We gather various interpolation results used  in the paper, especially in 
\S\ref{multlinsymsect}.
They can be found more or less explicit in the literature and no originality is claimed.
In some cases  it is hard to cite exactly the precise statement that we need 
and the reader might find the inclusion of this appendix helpful.

\subsection*{On complex interpolation of multilinear operators}

We  use complex interpolation for multilinear operators defined 
for functions in a quasi-normed space with values in a Lorentz-space. 
We  limit ourselves to the statements needed in this paper
where the target space of our operator is not varied.

In the following lemma we let the measure space 
$\cM$ be a finite set,  with counting measure, and let $V$ be a Lorentz space. For a positive weight on $\cM$ the norm in 
$\ell^p(w)$ is given by $\|f\|_{\ell^p(w)}=(\sum_{x\in \cM} |f(x)|^pw(x))^{1/p}$.

\begin{lemma} \label{multilinonseq}
Let 
$T$ be a multilinear operator  defined on $n$-tuples of functions on $\cM$
and suppose that for some some weights $w_{i,0}$,
$w_{i,1}$ on $\cM$ and $p_{i,0}, p_{i,1}\in (0,\infty]$ 
\Be\begin{aligned}
&\|T[f_1,\dots, f_n]\|_V \le M_0 \prod_{i=1}^n
\|f_i\|_{\ell^{p_{i,0}}(w_{i,0})}\,,
\\
&\|T[f_1,\dots, f_n]\|_V \le M_1 \prod_{i=1}^n
\|f_i\|_{\ell^{p_{i,1}}(w_{i,1})}\,.
\end{aligned}
\Ee
Let $0<\vth<1$ and define $p_i$ and weight functions $w_i$ by
\begin{align}
\frac{1}{p_i}&=\frac{(1-\vth)}{p_{i,0}} +\frac{\vth}{p_{i,1}}\,,
 \\w_i &= [w_{i,0}^{(1-\vth)/p_{i,0}}
w_{i,1}^{\vth/p_{i,1}}]^{p_i}\,.
\end{align}
Then  there is $C$ (independent of the $f_1,\dots, f_n$ and $\cM$) so that
\Be
\|T[f_1,\dots, f_n]\|_V \le CM_0^{1-\vth}M_1^\vth \prod_{i=1}^n
\|f_i\|_{\ell^{p_{i}}(w_{i})}\,.
\Ee
\end{lemma}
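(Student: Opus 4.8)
The plan is to prove this via the standard complex interpolation method (Stein interpolation) applied to multilinear operators, reduced to a situation where the operator is analytically varying in a single complex parameter. Since $\cM$ is a finite set, all the sequence spaces involved are finite-dimensional, so there are no convergence or density subtleties and every function in sight has finitely many nonzero values; in particular the interpolation is really a statement about a finite-dimensional family of estimates.

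First I would fix nonnegative test functions $f_1,\dots, f_n$ on $\cM$, which we may assume are not identically zero, and normalize $\|f_i\|_{\ell^{p_i}(w_i)}=1$ for each $i$. The goal is then $\|T[f_1,\dots,f_n]\|_V \le CM_0^{1-\vth}M_1^\vth$. For a complex parameter $z$ in the strip $S=\{0\le \Re z\le 1\}$ I would build analytic families $f_i^z$ by the usual recipe: writing $f_i = g_i u_i$ with $|u_i|\le 1$ and $g_i\ge 0$, set
\begin{equation}
f_i^z = u_i\, g_i^{\,p_i\left(\frac{1-z}{p_{i,0}}+\frac{z}{p_{i,1}}\right)} \cdot \frac{w_i^{\,p_i\left(\frac{1-z}{p_{i,0}}+\frac{z}{p_{i,1}}\right)}}{w_{i,0}^{(1-z)/p_{i,0}}\, w_{i,1}^{z/p_{i,1}}}\,,
\end{equation}
so that $f_i^\vth = f_i$ (using the definitions of $p_i$ and $w_i$), $f_i^z$ depends analytically on $z$ (each entry is an entire function of $z$ since $\cM$ is finite and the bases are positive), and on the line $\Re z = 0$ one has $\|f_i^{it}\|_{\ell^{p_{i,0}}(w_{i,0})} = \|f_i\|_{\ell^{p_i}(w_i)}^{p_i/p_{i,0}} = 1$, while on $\Re z = 1$ one has $\|f_i^{1+it}\|_{\ell^{p_{i,1}}(w_{i,1})} = 1$. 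One must be mildly careful if some $p_{i,0}$ or $p_{i,1}$ equals $\infty$, in which case the corresponding exponent in the recipe is interpreted as $0$ and the weight power is absent; these degenerate cases should be handled by the same bookkeeping.

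Next I would consider $F(z) = T[f_1^z,\dots,f_n^z]$, an analytic $V$-valued function on the strip (a finite linear combination, with entire-function coefficients, of the fixed vectors $T[e_{j_1},\dots,e_{j_n}]$ where $e_j$ ranges over coordinate functions on $\cM$, hence trivially analytic and bounded on $S$). To pass from the pointwise estimates on $F$ to a norm estimate I would pair against functionals: for any $\psi$ in the unit ball of $V'$ — or, to stay within the quasi-Banach framework when $V=L^{r,\infty}$ with $r<1$, one uses that $V$ is $r$-convex and normable only on an appropriate subspace, so it is cleaner to invoke the three-lines lemma for the scalar function $z\mapsto \langle \psi, F(z)\rangle$ for $\psi$ in a norming set of bounded linear functionals on $V$ (Lorentz spaces $L^{pq}$ on a finite measure space with the finite set $\cM$ — here $V$ is a Lorentz space on some fixed underlying measure space, and one uses that its dual, or a renorming, separates points appropriately). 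Applying the hypotheses gives $\sup_t \|F(it)\|_V \le M_0$ and $\sup_t \|F(1+it)\|_V \le M_1$; the scalar three-lines lemma then yields $|\langle\psi,F(\vth)\rangle|\le C M_0^{1-\vth}M_1^\vth$ uniformly over the norming family, and hence $\|F(\vth)\|_V = \|T[f_1,\dots,f_n]\|_V \le C M_0^{1-\vth} M_1^\vth$.

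The main obstacle is the handling of the target quasi-normed Lorentz space $V$ in the three-lines argument: when $V = L^{r,\infty}$ with $0<r<1$ the triangle inequality fails and there is no honest dual pairing that recovers the quasi-norm, so one cannot directly apply the vector-valued three-lines lemma. The remedy — which is exactly the kind of fact the paper packages into its $r$-convexity discussion (Kalton; Stein–Taibleson–Weiss) — is to work with an equivalent norm on $V$ coming from $r$-convexity and a suitable predual description, or alternatively to apply a quasi-Banach version of the Hadamard three-lines theorem (valid because $z\mapsto \|F(z)\|_V^r$ is, up to constants, subharmonic-like in the relevant sense for $F$ analytic with values in an $r$-convex space). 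Either way this is where the constant $C$ in the conclusion — which is not simply $1$ — enters, and it is the only genuinely non-routine point; the rest is the standard Stein interpolation bookkeeping, simplified by finiteness of $\cM$.
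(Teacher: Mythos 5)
Your construction of the analytic families $f_i^z$ is exactly the one the paper uses, and the reduction to a bounded analytic $V$-valued function on the strip is fine. The gap is in the only step you yourself identify as non-routine: passing from the boundary bounds to the bound at $z=\vartheta$ when $V$ is a quasi-normed Lorentz space. The duality route does not work: for the targets actually needed in this paper (e.g.\ $L^{Q/d,\infty}$ and $L^{r,\infty}$ with $r=Q/n<1$) there is no norming family of bounded linear functionals --- the dual of such a space does not recover the quasi-norm with uniform constants (for $L^p$, $p<1$, on a nonatomic space the dual is even trivial), so ``pairing against a norming set'' is not available. Your fallback, a ``quasi-Banach three-lines theorem'' justified by the claim that $z\mapsto\|F(z)\|_V^{r}$ is subharmonic-like because $V$ is $r$-convex, is not a valid inference: $r$-convexity of the quasi-norm (the $r$-triangle inequality of Kalton and Stein--Taibleson--Weiss) does not imply plurisubharmonicity of the quasi-norm along analytic families, and the maximum modulus principle can genuinely fail for analytic functions with values in quasi-Banach spaces; establishing it for Lorentz spaces requires a separate argument (analytic convexity), which you neither prove nor cite precisely. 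Since in the quasi-Banach case this step is the entire content of the lemma, the proof as written is incomplete.

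The paper circumvents this by never applying a vector-valued three-lines theorem at all. Following Sagher, it scalarizes: for a.e.\ point $y$ of the underlying measure space the function $z\mapsto H_z(y)=T[f_{1,z},\dots,f_{n,z}](y)$ is a bounded scalar analytic function, so $\log|H_z(y)|$ is dominated by the Poisson integral of its boundary values on the strip; exponentiating and applying Jensen's inequality with the normalized Poisson kernels gives the pointwise bound $|H_{\vartheta}(y)|\le A_0(y)^{1-\vartheta}A_1(y)^{\vartheta}$, where $A_0,A_1$ are $\rho$-averages (with $\rho<\min\{r,q\}$) of the boundary functions. One then takes Hunt's equivalent $\rho$-convex quasi-norm on $L^{r,q}$ built from the maximal rearrangement $h^{**}_\rho$, and uses H\"older, Fubini and the integral Minkowski inequality (legitimate since $q>\rho$) to convert the pointwise bound into $\|H_\vartheta\|_V\lesssim M_0^{1-\vartheta}M_1^{\vartheta}$. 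If you want to complete your write-up, replace the duality/three-lines step by this pointwise Poisson--Jensen argument (or supply an actual proof of a three-lines theorem for analytic functions valued in Lorentz spaces); the analytic-family bookkeeping you already have can be kept verbatim.
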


\begin{proof}
This is an adaptation of the standard argument in complex interpolation (\cite{sw}) for analytic families of operators, in the setting for Lorentz spaces in \cite{sagher}. We assume that $V$ is a Lorentz space associated to the measure space $\Omega$ with measure $\mu$, say
$L^{r,q}(\mu)$. Let  $\rho\in (0,1]$ and assume in addition 
 $\rho<\min \{q,r\}$. Define 
 the maximal function $h^{**}_\rho\equiv h^{**}$ on $(0,\infty)$ by 
\Be\label{maxrearr} 
h^{**}(t)=\begin{cases}
\sup_{E: \mu(E)>t} (\mu(E)^{-1}\int_E|f(y)|^\rho d\mu(y))^{1/\rho}, \quad&
t\in (0,\mu(\Omega))
\\
 (t^{-1}\int_\Omega|f(y)|^\rho d\mu(y))^{1/\rho}, \quad&  t\in [\mu(\Omega),\infty)
\end{cases}.\Ee
 The function $[h^{**}]^\rho$ is dominated by the Hardy-Littlewood maximal function of $[h^*]^\rho$, where $h^*$ is 
 the nonincreasing rearrangement  of $h$.

For a function $g$ on $(0,\infty)$ set $\|g\|_{\la^{r,q}}= (\tfrac qr
\int_0^\infty t^{q/r} |g(t)|^q \tfrac{dt}{t})^{1/q}$ if $q<\infty$ and 
$\|g\|_{\la^{r,\infty }}= \sup_{t>0} t^{1/r} |g(t)|$.
Then  Hunt \cite{hu} showed that 
the expression $\|h^{**}\|_{\la^{r,q}}$ is  a quasi-norm on $L^{q,r}$ which makes 
$L^{q,r}$ 
a $\rho$-convex space.

Let $S$ be the strip $\{z=\vth+\ic\tau: 0<\vth<1, \,\tau\in \bbR\}$ and 
$\overline S$ its closure.
Let $f_i\in \ell^{p_i}(w_i)$ so that $\|f_i\|_{\ell^{p_i}(w_i)}\le 1$
 and define for $x\in \cM$
$$f_{i,z}(x) = 
e^{\ic\arg(f(x))}
 \frac{[|f(x)|^{p_i}w_i(x)]^{\frac{1-z}{p_{i,0}}+\frac{z}{p_{i,1}}}}
{w_{i,0}(x)^{\frac{1-z}{p_{i,0}}} w_{i,1}(x)^{\frac{z}{p_{i,1}}}}\,.
$$
Then $f_{i,\vth}=f$. Moreover, 
$\|f_z\|_{\ell^{p_{i,0}}(w_{i,0})}= \|f\|_{\ell^{p_i}(w_i)}^{p_{i,0}/{p_i}}$
if $\Re(z)=0$, and
$\|f_z\|_{\ell^{p_{i,1}}(w_{i,1})}= \|f\|_{\ell^{p_i}(w_i)}^{p_{i,1}/{p_i}}$
if $\Re(z)=1$.
We define, for $y\in\Omega$,
$$H_z(y)=
T[f_{1,z},\dots, f_{1,z}](y)\,.
$$
Then $H_\vth= T[f_{1},\dots, f_{1}]$ and we must show that $\|H_\vth\|_V\lc M_0^{1-\vth}M_1^\vth$.
For almost every $y\in \Omega$ the function $z\mapsto H_z(y)$ is bounded and 
analytic in $S$, continuous on $\overline S$.

We use the standard properties of the Poisson-kernel associated with $S$,
 see Ch. V.4 in \cite{sw}. Let 
$P_0(\vth,t)=\frac 12 \frac{\sin(\pi\vth)}{\cosh (\pi  t)-\cos(\pi\vth)}$,
$P_1(\vth,t)=\frac 12 \frac{\sin(\pi\vth)}{\cosh (\pi t)+\cos(\pi\vth)}$.
For $0\le\vth\le 1$ we then have 
$\int_{-\infty}^\infty P_0(\vth,t)dt=(1-\vth)$,
$\int_{-\infty}^\infty P_1(\vth,t)dt=\vth$. 
Thus, proceeding exactly as in \cite{sagher}  we have
$$
\,\log|H_{\vth+\ic \tau}(y)
|\le \int_{-\infty}^\infty P_0(\vth,\tau)\log|H_{\ic \tau}(y)|d\tau +
\int_{-\infty}^\infty P_1(\vth,\tau)\log|H_{1+\ic \tau}(y)|d\tau
$$
and then
$$\begin{aligned}
|H_{\vth+\ic \tau}(y)
|\le\, &\Big(\exp\Big(\frac{1}{1-\vth}
\int_{-\infty}^\infty P_0(\vth,\tau)\log[|H_{\ic \tau}(y)|^\rho]  d\tau\Big)\Big)^{\frac{1-\vth}{\rho}}\\&
\times
\Big(\exp\Big(\frac{1}{\vth}
\int_{-\infty}^\infty P_1(\vth,\tau)\log[|H_{1+\ic \tau}(y)|^\rho]  d\tau\Big)\Big)^{\frac{\vth}{\rho}}\,.
\end{aligned}
$$
By Jensen's inequality,
$$
|H_{\vth+\ic \tau}(y)|\le A_0(y)^{1-\vth} A_1(y)^{\vth}$$ where
$$\begin{aligned}
A_0(y)&=  \Big(\frac{1}{1-\vth}
\int_{-\infty}^\infty P_0(\vth,\tau)|H_{\ic \tau}(y)|^\rho  d\tau\Big)^{1/\rho}
\\ A_1(y)&=
\Big(\frac{1}{\vth}
\int_{-\infty}^\infty P_1(\vth,\tau)|H_{1+\ic \tau}(y)|^\rho d\tau\Big)^{1/\rho}.
\end{aligned}
$$
By H\"older's inequality   we have $( A_0^{1-\vth} A_1^{\vth})^{**}(t)\le
(A_0^{**}(t))^{1-\vth} (A_1^{**}(t))^\vth$.
By Fubini's theorem we get $A_0^{**}(t)\le B_0(t)$,
$A_1^{**}(t)\le B_1(t)$ where
$$\begin{aligned}
B_0(t)&=  \Big(\frac{1}{1-\vth}
\int_{-\infty}^\infty P_0(\vth,\tau)|H_{\ic \tau}^{**}(t)|^\rho  d\tau\Big)^{1/\rho}\,,
\\ B_1(t)&=
\Big(\frac{1}{\vth}
\int_{-\infty}^\infty P_1(\vth,\tau)|H_{1+\ic \tau}^{**}(t)|^\rho d\tau\Big)^{1/\rho}\,.
\end{aligned}
$$
Hence
$$|H_{\vth+\ic \tau}^{**}(t)|\le 
(A_0^{**}(t))^{1-\vth} (A_1^{**}(t))^\vth \le B_0^{1-\vartheta}(t)
 B_1^{\vartheta}(t)$$
and another  application of  H\"older's inequality  yields 
$$\|H_{\vth+\ic\tau}^{**}\|_{\la^{q,r}}\le 
\|B_0\|_{\la^{q,r}}^{1-\vth} 
 \|B_1\|_{\la^{q,r}}^{\vth}\,. $$
Since $q>\rho$ we can apply the integral Minkowski inequality
(as a  version of the triangle inequality in $L^{q/\rho}(0,\infty)$)
$$\begin{aligned}
\|B_0\|_{\la^{q,r}}&\le \Big(\frac{1}{1-\vth}
\int_{-\infty}^\infty P_0(\vth,\tau)
\|H_{\ic \tau}^{**}\|_{\la^{r,q}}^\rho  d\tau\Big)^{1/\rho}\,,
\\
\|B_1\|_{\la^{q,r}}&\le \Big(\frac{1}{\vth}
\int_{-\infty}^\infty P_1(\vth,\tau)\|H_{1+\ic \tau}^{**}\|_{\la^{r,q}}^\rho  d\tau\Big)^{1/\rho}\,.
\end{aligned}
$$
By assumption
$$\begin{aligned}
\|H_{\ic \tau}^{**}\|_{\la^{r,q}}&\le C M_0
 \prod_{i=1}^n
\|f_{i,\ic\tau}\|_{\ell^{p_{i,0}}(w_{i,0})} \le CM_0\,,
\\
\|H_{1+\ic \tau}^{**}\|_{\la^{r,q}}&\le C M_1
 \prod_{i=1}^n
\|f_{i,1+\ic\tau}\|_{\ell^{p_{i,1}}(w_{i,1})} \le CM_1\,.
\end{aligned}
$$
We get 
$\|H_{\vth+\ic\tau}^{**}\|_{\la^{q,r}}\le C M_0^{1-\vth}M_1^\vth$, using
 the above formulas for the integrals of $P_0$ and $P_1$.
\end{proof}

We  now use Lemma \ref{multilinonseq} and a straightforward  {\it  transference method} to prove an interplation theorem for sequences 
with values in certain  real interpolation spaces $\overline X_{\theta,q}$. 
When applying the complex interpolation method
 the 
case  $q=\infty$ may pose some difficulties which can be avoided if 
$\ell^\infty$   is replaced by the closed subspace $c_0$. In particular 
it is convenient to replace 
the real interpolation space  $\overline X_{\vth,\infty}$
by $\overline X_{\vth,\infty}^0$, the closure 
of $X_0\cap X_1$ in $\overline X_{\vth,\infty}$.
To deal with this distinction we introduce some notation for the 
following lemma.  We will work with a couple $\overline X=(X_0, X_1)$ of
  compatible complete quasi-normed spaces. If 
$q<\infty$ we denote by  $\cZ(q,s,\theta)= \ell^q_s(\overline X_{\theta,q})$  the space of $\overline X_{\theta,q}$-valued 
sequences $F=\{F_k\}_{k\in \bbZ} $ with norm
$$\|F\|_{\cZ(q,s,\theta)}= \Big(\sum_{k\in \bbZ} 2^{ks q}
\|F_k\|_{\overline X_{\theta,q}}^q\Big)^{1/q}\,.$$ 
 For $q=\infty$ we define   $\cZ(\infty,s,\theta)= 
c_0(\overline X_{\theta,\infty}^0)$, a  closed subspace of 
$\ell^\infty_s(\overline X_{\theta,\infty})$,
 with norm
$\sup_{k\in \bbZ} 2^{ks}\|F_k\|_{\overline X_{\theta,\infty}}$.
We shall say that
$F=\{F_k\}\in \cZ(q,s,\theta)$ is compactly supported if $F_k=0$ except for finitely many $k$.
\begin{lemma} \label{transferlemma}
For $i=1,\dots, n$, let  $0<\thiz, \thio<1$, 
$0<\qiz,\qio\le \infty$, $\sio, \siz\in \bbR$.
Let $T$ be an $n$-linear operator defined a priori on 
$n$-tuples of compactly  supported $(X_0\cap X_1)$-valued sequences, 
with values in a Lorentz space $V$, 
and suppose that for such sequences the inequalities 
\Be\label{Zassumpt}
\|T[F_{1},\dots, F_{n}]\|_V \le \begin{cases}  M_0\prod_{i=1}^n
\|F_i\|_{\cZ(\qiz,\siz,\thiz)}
\\ M_1 \prod_{i=1}^n
\|F_i\|_{\cZ(\qio,\sio,\thio)}
\end{cases}
\Ee 
hold. Define  $q_{i}$, $s_{i}$ and $\theta_{i}$ by
$$\Big(\frac {1}{q_{i}}, s_{i}, \theta_{i}\Big)=
(1-\vth)
\Big(\frac {1}{q_{i,0}}, s_{i,0}, \theta_{i,0}\Big)+\vth
\Big(\frac {1}{q_{i,1}}, s_{i,1}, \theta_{i,1}\Big)\,.
$$
Then  $T$ uniquely extends to an operator bounded on 
$\prod_{i=1}^n \cZ(q_{i}, s_{i},\theta_{i})$ so that
$$\|T[F_{1},\dots, F_{n}]\|_V \lc  M_0^{1-\vth}M_1^\vth \prod_{i=1}^n
\|F_i\|_{\cZ(q_i,s_i,\theta_i)}\,.$$
\end{lemma}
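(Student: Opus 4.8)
The plan is to reduce the claim to Lemma~\ref{multilinonseq} by a transference/discretization argument, replacing each real interpolation space $\overline X_{\theta,q}$ by a weighted $\ell^q$ space built from the $J$-functional. First I would recall the discrete $J$-method description of $\overline X_{\theta,q}$: an element $x$ has norm (up to equivalence) equal to the infimum of $(\sum_{l\in\bbZ} [2^{-l\theta} J(2^l,u_l;\overline X)]^q)^{1/q}$ over representations $x=\sum_l u_l$ with $u_l\in X_0\cap X_1$. This lets me identify, for a compactly supported $(X_0\cap X_1)$-valued sequence $F=\{F_k\}$, an almost-optimal ``lift'' to a doubly indexed array $\{u_{k,l}\}_{k,l\in\bbZ}$ with $F_k=\sum_l u_{k,l}$ and
$$\|F\|_{\cZ(q,s,\theta)}^q \approx \sum_k 2^{ksq}\Big(\sum_l [2^{-l\theta}J(2^l,u_{k,l})]^q\Big).$$
Here the subtlety noted in the paper arises: for $q=\infty$ one must work with $c_0$ rather than $\ell^\infty$, and with $\overline X_{\theta,\infty}^0$; since $F$ is compactly supported in $k$ and each $F_k\in X_0\cap X_1$, only finitely many $u_{k,l}$ matter, so this causes no real trouble at the level of finitely supported data, and the extension to all of $\cZ(q_i,s_i,\theta_i)$ at the end is by density.

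Next I would set up the transference. Fix compactly supported $F_i=\{F_{i,k}\}$, choose near-optimal lifts $\{u_{i,k,l}\}$ as above, and for each fixed choice of the summation index I would plug $u_{i,k,l}$ into $T$. Because $T$ is $n$-linear and each $F_{i,k}=\sum_l u_{i,k,l}$ with only finitely many nonzero terms, multilinearity gives
$$T[F_1,\dots,F_n]=\sum_{(k_1,l_1),\dots,(k_n,l_n)} T[u_{1,k_1,l_1},\dots,u_{n,k_n,l_n}].$$
The idea is to regard the right-hand side as the action of $T$ on $n$-tuples of \emph{scalar} sequences indexed by the finite set $\cM$ of relevant pairs $(k,l)$, by freezing the $X_0\cap X_1$ directions: define, on the finite measure space $\cM_i$ with appropriate weights, the numerical sequence whose $(k,l)$ entry encodes $J(2^l,u_{i,k,l})$, and interpret $T$ via the fixed vectors $u_{i,k,l}/J(2^l,u_{i,k,l})$ (which have $J$-functional $1$, hence norm $\le 1$ in both $X_0$ and $X_1$). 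Then the hypotheses \eqref{Zassumpt} translate \emph{exactly} into two endpoint bounds of the form required by Lemma~\ref{multilinonseq}, with weights
$$w_{i,0}(k,l)=2^{k q_{i,0} s_{i,0}} 2^{-l q_{i,0}\theta_{i,0}},\qquad w_{i,1}(k,l)=2^{k q_{i,1} s_{i,1}} 2^{-l q_{i,1}\theta_{i,1}},$$
and Lebesgue exponents $p_{i,0}=q_{i,0}$, $p_{i,1}=q_{i,1}$. One checks that the interpolated weight $w_i=[w_{i,0}^{(1-\vth)/q_{i,0}}w_{i,1}^{\vth/q_{i,1}}]^{q_i}$ from Lemma~\ref{multilinonseq} is precisely $2^{k q_i s_i}2^{-l q_i\theta_i}$, i.e. the weight that reconstitutes $\|F_i\|_{\cZ(q_i,s_i,\theta_i)}$ via the $J$-method description of $\overline X_{\theta_i,q_i}$. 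Invoking Lemma~\ref{multilinonseq} gives
$$\|T[F_1,\dots,F_n]\|_V\lc M_0^{1-\vth}M_1^\vth\prod_{i=1}^n \Big(\sum_{k,l} 2^{kq_is_i}2^{-lq_i\theta_i} J(2^l,u_{i,k,l})^{q_i}\Big)^{1/q_i},$$
and taking the infimum over the lifts (which is the $J$-method norm, up to the equivalence constant) yields the claimed bound on compactly supported data. Finally, compactly supported $(X_0\cap X_1)$-valued sequences are dense in each $\cZ(q_i,s_i,\theta_i)$ for $q_i<\infty$, and in $\cZ(\infty,s_i,\theta_i)=c_0(\overline X_{\theta_i,\infty}^0)$ by definition of the latter; hence $T$ extends uniquely and continuously.

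The main obstacle I anticipate is bookkeeping rather than conceptual: making the transference rigorous requires that the ``frozen'' unit vectors $u_{i,k,l}/J(2^l,u_{i,k,l})$ genuinely satisfy $\|\cdot\|_{X_0}\le 1$ and $\|\cdot\|_{X_1}\le 2^{-l}$-type bounds so that the two endpoint inequalities of Lemma~\ref{multilinonseq} follow cleanly, and one has to be careful that the equivalence constants in the discrete $J$-method (which depend on $\theta_i$ and $q_i$, and in the limiting $q_i=\infty$ case require the $\overline X_{\theta,\infty}^0$ replacement) are uniform enough to survive the interpolation. A secondary technical point is that Lemma~\ref{multilinonseq} as stated is for a \emph{finite} set $\cM$ with counting measure; since we reduced to finitely supported lifts this is exactly the setting we need, but the final density extension must then be performed separately, using that the constant in Lemma~\ref{multilinonseq} is independent of $\cM$ so it passes to the countable index set in the limit. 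The rest — checking that the interpolated weight is the right one, and that $(1/q_i,s_i,\theta_i)$ is the correct convex combination — is the routine algebra already displayed above.
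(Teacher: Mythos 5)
Your overall strategy is the paper's: lift each $F_{i,k}$ to a $J$-method representation $F_{i,k}=\sum_l u_{i,k,l}$, freeze the vectors $u_{i,k,l}$, transfer the hypotheses \eqref{Zassumpt} to two weighted $\ell^{q}$ bounds for a multilinear operator on scalar sequences, apply Lemma \ref{multilinonseq}, and check that the interpolated weight reconstitutes $\|F_i\|_{\cZ(q_i,s_i,\theta_i)}$ (your algebra for the weights and exponents is correct). But there is a genuine gap at the step where you claim that, because $F$ is compactly supported in $k$ and takes values in $X_0\cap X_1$, ``only finitely many $u_{k,l}$ matter.'' This is false in general: for a fixed $F_{i,k}\in X_0\cap X_1$, a representation that nearly realizes $\|F_{i,k}\|_{\overline X_{\theta_i,q_i};J}$ typically requires infinitely many terms in $l$ (a one-term representation only gives the bound by $\|F_{i,k}\|_{X_0}^{1-\theta_i}\|F_{i,k}\|_{X_1}^{\theta_i}$, which can be far larger than the interpolation norm). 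Consequently your index set $\cM$ is not finite, and you cannot invoke Lemma \ref{multilinonseq} as stated. Nor can you simply truncate in $l$ and pass to a limit: truncation changes the sequence, and the only boundedness you have is in the endpoint $\cZ$-norms, while the chosen representation is controlled only in the \emph{interpolated} norm; the truncated sequences need not converge to $F_i$ in either endpoint space, so nothing in the hypotheses lets you identify $\lim T[F^{(L)}]$ with $T[F]$. Your closing density remark addresses the extension from compactly supported data to general data, not this separate issue, which is present already for compactly supported $F$.

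The paper resolves exactly this point with an extra device you are missing: after choosing representations normalized so that $\sum_{k,l}[2^{ks_i}2^{-l\theta_i}J(2^l,u_{i,k,l};\overline X)]^{q_i}\le 2^{q_i}$, it selects thresholds $N_0<N_1<\dots$ as in \eqref{defNnu} so that the contribution of the indices with $\max\{|k|,|l|\}\ge N_\nu$ is at most $2^{-\nu}$, splits $F_i=\sum_\nu F_i^\nu$ into finitely supported shells, and for each multi-index $\vec\nu$ applies the finite-set Lemma \ref{multilinonseq} to auxiliary operators $S_{\vec\nu}$ (built from the frozen vectors, with the $J$-values absorbed into the weight, regularized by an $\eps$ to keep the weight positive) on the finite index set $\cM_{\vec\nu}$. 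This yields $\|T[F_1^{\nu_1},\dots,F_n^{\nu_n}]\|_V\lc M_0^{1-\vth}M_1^{\vth}2^{-(\nu_1+\dots+\nu_n)}$, and only then does one sum the geometric series over $\vec\nu$ in the complete, $\rho$-convex space $V$ and identify the limit with $T[F_1,\dots,F_n]$ via convergence of $\sum_\nu F_i^\nu$ in $\cZ(q_i,s_i,\theta_i)$. Without this shell decomposition and the final summation argument, the reduction to the finite-set interpolation lemma is not justified; supplying it would make your argument coincide with the paper's proof.
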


\begin{proof}
The uniqueness of the extension is clear because of the density of compactly 
supported  $X_0\cap X_1$-valued functions in $\cZ(q,s,\theta)$ (for $q=\infty$ this requires the modification in the definition using 
$c_0(\overline X_{\theta,\infty}^0)$). 
In what follows we write matters out for the case that the $q_i<\infty$ and leave the obvious notational modifications in the case $q_i=\infty$ to the reader.
It will be convenient to use the characterization of $\overline X_{\theta,q}$ by means of the $J$-functional.

It suffices to prove  that for   $\|F_i\|_{\cZ(q_{i},s_{i},\theta_{i})}\le 1$, 
$i=1,\dots,d$,
\Be\label{goalTbd}
\|T(F_1,\dots, F_n)\|_V \lc M_0^{1-\vth} M_1^\vth .
\Ee
We 
 write $F_i=\{F_{i,k}\}$ with $F_{i,k} \in \overline X_{\th_{i},q_i}$ and
$$ \Big(\sum_k \big[ 2^{k s_{i}}\|F_{i,k}\|_{\overline X_{\th_{i} ,q_{i},J}} \big]^{q_{i}} \Big)^{1/q_{i}} \le 1 .$$
We can decompose $F_{i,k}=\sum_l u_{i,k,l}$ with $u_{i,k,l}\in X_0\cap X_1$ and convergence in $X_0+X_1$ so that
\[
\Big(\sum_l\big[ 2^{-l\th_{i} } J(2^l, u_{i,k,l};\overline
X)\big]^{q_{i}} \Big)^{1/q_{i}}\le (1+2^{-|k|-2})
\|F_{i,k}\|_{\overline X_{\th_{i} ,q_{i}; J}}
\]
and thus $\big(\sum_{k,l} 
[ 2^{k s_{i}} 2^{-l \th_{i}} J(2^l, u_{i,k,l};\overline X) 
]^{q_{i}}\Big)^{1/q_i}\le 2 .$
We set $N_0=0$ and define numbers $N_0<N_1<N_2< ...$ so that 
\Be\label{defNnu}
\Big(\sum_{\substack{ k,l\\ \max\{|k|,|l|\}\ge N_\nu }} \big[ 2^{k s_{i}} 2^{-l \th_{i}}
J(2^l, u_{i,k,l};\overline X) \big]^{q_{i}}\Big)^{1/q_{i}}\le 2^{-\nu} .
\Ee
for $i=1,\dots, n$.
Let $\chi_{\nu}(k,l)=1$ if $N_{\nu}\le \max\{|k|,|l|\}<N_{\nu+1}$ and 
$\chi_{\nu}(k,l)=0$ otherwise, and let  $F_{i,k}^{\nu} =\sum_l \chi_{\nu}(k,l) u_{i,k,l}$. Then $\sum_{\nu=1}^\infty F_{i}^{\nu}=F_i$ 
with convergence in $\cZ(q_{i}, s_{i}, \theta_{i})$ and, for each $k$, 
$\sum_{\nu=1}^\infty F_{i,k}^{\nu}=F_{i,k}$  with convergence in 
$\overline X_{\theta_{i},q_{i}} $ and {\it a fortiori} with convergence in 
$X_0+X_1$.

In order to prove \eqref{goalTbd} 
we fix the chosen vectors  $u_{i,k,l}$ and  define  operators acting on 
$n$-tuples of functions $\fa=\{\fa_{k,l}\}$ defined on a subset of  $\bbZ\times
\bbZ$.  Let $\cM_{\vec\nu}=\{(k,l):\max\{k,l\} \le 
\max\{ N_{\nu_1+1},\dots, 
N_{\nu_n+1}\}\}$. 
For any $n$-tuple $\vec\nu=(\nu_1,\dots, \nu_n)$ of nonnegative integers we let
$\cF^{\nu_i}_i(\fa)= \{\cF^{\nu_i}_{i,k}(\fa)\}_{k\in\bbZ}$, where for
$i=1,\dots,n$
$$\cF^{\nu_i}_{i,k}(\fa)=\sum_{l}
\chi_{\nu_i}(k,l)\fa_{k,l}\, u_{i,k,l} .$$ Now define for an $n$-tuple  of such sequences  a multilinear operator $S_{\vec\nu}$ by
$$
S_{\vec\nu}(\fa^1,\dots, \fa^n) = T[\cF^{\nu_1}_1(\fa^1), \dots, \cF^{\nu_n}_n(\fa^n)] .
$$
Let $$w_{q,s,\th}(k,l)= \big[2^{ks}2^{-l\theta}
(\eps+J(2^l,u_{i,k,l};\overline X))\big]^q \,.$$
It is our objective to show 
\begin{equation}\label{Sestimate}\|S_{\vec\nu}[\fa^1,\dots, \fa^n] \|_V \le C
M_0^{1-\vth}M_1^\vth\prod_{i=1}^n \|\fa^i\|_{\ell^{q_i}(w_{q_i,s_i,\th_i})}
\end{equation}
where the constant $C$ is independent of the
choice of the specific $u_{i,k,l}$ and independent of $\vec \nu$.
The inclusion of $\eps$ in the definition of $w_{q,s,\theta}$ guarantees the positivity of the weight.
Once the bound \eqref{Sestimate} is verified we will
then apply it  to the sequences
 $\fa^i_{k,l}=\chi_{\nu_i}(k,l)$. For this  choice of the $\fa^i$  an estimate  for the expression 
$\|S_{\vec\nu}[\fa^1,\dots, \fa^n]\|_V$ 
becomes an estimate for $\|T[F^{\nu_1}_1,\dots, F^{\nu_n}_n]\|_V$, 
after letting $\eps\to 0$.

Now for any admissible choice of $q,s,\theta$
$$\|\cF_i^{\nu_i}(\fa^i)\|_{\cZ(q,s,\theta)}=  \Big(\sum_k \Big[ 2^{ks} \Big\| 
\sum_l \chi_{\nu_i}(k,l)
\fa_{k,l}^i \, u_{i,k,l} \Big\|_{\overline X_{\vth ,q;J}}
\Big]^{q} \Big)^{1/q}$$ and by definition of the 
$\overline X_{\vth ,q;J}$ norm we have
$$\Big\| 
\sum_l \chi_{\nu_i}(k,l)
\fa_{k,l}^i \, u_{i,k,l} \Big\|_{\overline X_{\vth ,q;J}}
\le 
\Big(\sum_l \big[ 2^{-l\vth} 
J(2^l,  \fa_{k,l}^i \, u_{i,k,l};\overline X)
\big]^{q}
\Big)^{1/q}\,.
$$
Therefore, by the homogeneity of the $J$-functional 
$$
\|\cF_i^{\nu_i}(\fa^i)\|_{\cZ(q,s,\theta)}\lc
\Big(\sum_{k,l} 2^{(ks-l\theta)q}[
\chi_{\nu_i}(k,l)J(2^l, u_{i,k,l};\overline X)|\fa^i_{k,l}|]^q\Big)^{1/q}.
$$
Notice that we can consider the sequences $\fa^i$ as functions defined on 
$\cM_{\vec\nu}$. 
By assumption 
\begin{multline*}
\| T[\cF_1(\fa^1), \dots, \cF_n(\fa^n)]\|_V\\
\lc\min\big\{
M_0\prod_{i=1}^n \|\cF_i(\fa^i)\|_{\cZ(\qiz,\siz,\thiz)},\, 
M_1\prod_{i=1}^n \|\cF_i(\fa^i)\|_{\cZ(\qio,\sio,\thio)}\big\},
\end{multline*}
 and by the above this implies 
\begin{multline*}\|S_{\vec\nu}[\fa^1,\dots, \fa^n]\|_V\\
\lc \min\big\{
M_0\prod_{i=1}^n\|\fa^i\|_{\ell^{\qiz}(w_{\qiz,\siz,\thiz})},\,
M_1\prod_{i=1}^n\|\fa^i\|_{\ell^{\qio}(w_{\qio,\sio,\thio})}\big\}.
\end{multline*}
Since $w_{q_i,s_i,\th_i}= [w_{\qiz,\siz,\thiz}^{(1-\vth)/\qiz}
w_{\qio,\sio,\thio}^{(1-\vth)/\qio}]^{q_i}$,  Lemma 
\ref{multilinonseq} now gives   \eqref{Sestimate}, with a 
constant independent of $\vec \nu$.

Finally if we apply \eqref{Sestimate} with the sequences
 $\fa^i_{k,l}=\chi_{\nu_i}(k,l)$  and let $\eps\to0$  we obtain
\begin{multline*} \|T[F^{\nu_1}_1,\dots, F^{\nu_n}_n]\|_V \\ \lc M_0^{1-\vth}M_1^\vth
\prod_{i=1}^n \Big(\sum_{k,l} \big[ \chi_{\nu_i}(k,l)2^{ks_{i}} 2^{-l\theta_{i}} J(2^l,u_{i,k,l};\overline X)
\big]^{q_{i}}\Big)^{1/q_{i}}
\end{multline*}
and, by \eqref{defNnu} this expression  is  $\lc M_0^{1-\vth}M_1^\vth
 2^{-(\nu_1+\dots+\nu_n)}$.
Since $\sum_{\nu_i} F^{\nu_i}_i=F_i$ with convergence in 
$\ell^{q_{i}}(\overline  X_{\theta_{i}, q_{i}})$ we see that 
$\sum_{\vec\nu} T[F^{\nu_1}_1,\dots, F^{\nu_n}_n]$ converges in $V$ to 
$T[F_1,\dots, F_n]$ so that $\|T(F_1,\dots, F_n)\|_V\lc M_0^{1-\vth}M_1^\vth $.
\end{proof} 

\subsection*{Means}
Often one  generates new estimates 
by taking  means of given estimates. The new bounds 
 may then be interpreted as estimates on intermediate  spaces:

\begin{lemma}\label{thetar} Let $0<r\le 1$ and let $V$ be an  $r$-convex space. For $i=1,\dots, n$ let $\overline X^i=(X^i_0, X^i_1)$ be couples of
compatible quasi-normed
spaces and let $T$ be an $n$-linear operator defined on
$\prod_{i=1}^n (X^i_0\cap X^i_1)$ with values in $V$.
Suppose that
$$ \| T(f_1,\dots, f_n)\|_V
\le  \prod_{i=1}^n \| f_i\|_{X^i_0}^{1-\theta_i}
 \| f_i\|_{X^i_1}^{\theta_i} $$
for some $0< \theta_i < 1$. 
 Then there is $C>0$
so that for all  $(f_1,\dots, f_n) \in \prod_{i=1}^n X^i_0\cap X^i_1$
\begin{equation}\label{rineq}
\| T(f_1,\dots, f_n)\|_V  \le C \prod_{i=1}^n \| f_i\|_{\overline {X}^i_{\theta_i, r}}
\end{equation}
and $T$ extends to a bounded operator on
$ \prod_{i=1}^n\overline {X}^i_{\theta_i, r}$.
\end{lemma}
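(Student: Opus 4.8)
The plan is to first establish the \emph{a priori} inequality \eqref{rineq} for $f_i\in X^i_0\cap X^i_1$, and then to obtain the bounded $n$-linear extension to $\prod_{i=1}^n\overline X^i_{\theta_i,r}$ from the density of $X^i_0\cap X^i_1$ in $\overline X^i_{\theta_i,r}$ (valid since $r<\infty$) together with the multilinearity of $T$. The inequality \eqref{rineq} will be proved by combining the $J$-functional description of the real interpolation spaces with the $r$-convexity of $V$.

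The first ingredient is the elementary pointwise estimate: for $u\in X_0\cap X_1$ and every $t>0$ one has $\|u\|_{X_0}\le J(t,u;\overline X)$ and $t\|u\|_{X_1}\le J(t,u;\overline X)$, hence
\[
\|u\|_{X_0}^{1-\theta}\|u\|_{X_1}^{\theta}\le \big(J(t,u;\overline X)\big)^{1-\theta}\big(t^{-1}J(t,u;\overline X)\big)^{\theta}=t^{-\theta}J(t,u;\overline X).
\]
Applying the hypothesis of the lemma to elementary terms $u_{i,k_i}\in X^i_0\cap X^i_1$, with the choice $t=2^{k_i}$ in the $i$-th slot, yields
\[
\big\|T(u_{1,k_1},\dots,u_{n,k_n})\big\|_V\le\prod_{i=1}^n 2^{-k_i\theta_i}\,J\big(2^{k_i},u_{i,k_i};\overline X^i\big).
\]

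Now fix $f_i\in X^i_0\cap X^i_1$ and, using the equivalence theorem 3.11.3 of \cite{BL}, choose $J$-representations $f_i=\sum_{k\in\bbZ}u_{i,k}$, with $u_{i,k}\in X^i_0\cap X^i_1$, convergence in $X^i_0+X^i_1$, and
\[
\Big(\sum_k\big[2^{-k\theta_i}J(2^k,u_{i,k};\overline X^i)\big]^{r}\Big)^{1/r}\le 2\,\|f_i\|_{\overline X^i_{\theta_i,r;J}}\lc\|f_i\|_{\overline X^i_{\theta_i,r}}.
\]
Expanding $T(f_1,\dots,f_n)=\sum_{k_1,\dots,k_n}T(u_{1,k_1},\dots,u_{n,k_n})$ by multilinearity, applying the $r$-convexity inequality \eqref{triangleineq} in $V$, inserting the pointwise bound above, and observing that the resulting sum over the multi-index factors into a product of one-dimensional sums, one obtains
\begin{align*}
\big\|T(f_1,\dots,f_n)\big\|_V&\lc\Big(\sum_{k_1,\dots,k_n}\prod_{i=1}^n\big[2^{-k_i\theta_i}J(2^{k_i},u_{i,k_i};\overline X^i)\big]^{r}\Big)^{1/r}\\
&=\prod_{i=1}^n\Big(\sum_{k}\big[2^{-k\theta_i}J(2^k,u_{i,k};\overline X^i)\big]^{r}\Big)^{1/r}\lc\prod_{i=1}^n\|f_i\|_{\overline X^i_{\theta_i,r}},
\end{align*}
which is \eqref{rineq}. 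The extension to $\prod_{i=1}^n\overline X^i_{\theta_i,r}$ then follows, as said, from density and multilinearity, with the same constant.

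The step I expect to be the main obstacle is the justification of the expansion $T(f_1,\dots,f_n)=\sum_{k_1,\dots,k_n}T(u_{1,k_1},\dots,u_{n,k_n})$ with convergence in $V$. The $r$-summability just established, together with the completeness of $V$ (which is a Lorentz, hence quasi-Banach, space in the applications), shows that the multi-series converges unconditionally in $V$; the actual content is the identification of its sum with $T(f_1,\dots,f_n)$. For this one truncates each representation to a finite partial sum $f_i^{(M)}=\sum_{|k|\le M}u_{i,k}\in X^i_0\cap X^i_1$, for which the corresponding finite expansion of $T(f_1^{(M)},\dots,f_n^{(M)})$ is exact by multilinearity, and then passes to the limit $M\to\infty$. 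In the uses of this lemma in the present paper $T$ is a concrete $n$-linear integral operator and this passage is transparent, since one merely distributes the integral over the decomposition of the integrand; in the general statement the conclusion should be read as asserting the existence of a bounded $n$-linear extension to $\prod_{i=1}^n\overline X^i_{\theta_i,r}$ which agrees with $T$ wherever this expansion is legitimate. The hypothesis $0<\theta_i<1$ enters via the equivalence theorem, which requires $\overline X^i_{\theta_i,r}$ to be a bona fide interpolation space.
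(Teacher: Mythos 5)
Your argument is correct and is essentially the paper's own proof: decompose each $f_i$ via a near-optimal $J$-representation, expand $T$ by multilinearity, apply the $r$-convexity of $V$, dominate $\|f_i\|_{X^i_0}^{1-\theta_i}\|f_i\|_{X^i_1}^{\theta_i}$ by $2^{-k\theta_i}J(2^k,u_{i,k};\overline X^i)$, factor the multi-indexed sum, and extend by density. Even the convergence subtlety you flag (identifying the multi-series with $T(f_1,\dots,f_n)$) is handled in the paper only by the remark that it ``follows easily by considering finite sums and a limiting argument,'' so your treatment matches theirs.
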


\begin{proof}
Let $f_i \in \overline X_{\theta_i,r}$ and thus 
$f_i=\sum_{l_i\in \bbN} u_{i,l}$ 
with $u_{i,l}\in X_0^i\cap X_1^i$ and convergence in 
$X_0^i+X_1^i$.
Since $V$ is $r$-convex one can show 
 $$\| T[f_1,\dots, f_n]\|_V^r \le C^r \sum_{\vec l\in \bbN^n} 
\| T[u_{1,l_1},\dots, u_{n,l_n}]\|_V^r \,;
$$
this follows easily by considering finite sums and a limiting argument.
By assumption and the  definition of the $J$-functional the right hand side of the last display is dominated by $C^r$ times
\begin{align*}
\sum_{\vec l\in \bbN^n} \prod_{i=1}^n \big[
&\|u_{i,l_i}\|_{X_{i,0}}^{1-\theta_i} \| u_{i, l_i}\|_{X_{i,1}}^{\theta_i} \big]^r \\
\, &\le\,  \sum_{\vec l\in \bbN^n} \prod_{i=1}^n \big[
J(2^{l_i}, u_{i,l_i};\overline X_i)^{1-\theta_i}
(2^{-l_i} J(2^{l_i}, u_{i,l_i};\overline X_i))^{\theta_i} \big]^r
\\&=
\prod_{i=1}^n \Big(\sum_{l_i\in \bbN} \big[ 2^{-l_i\theta} 
J(2^{l_i}, u_{i,l_i};\overline X_i)\big]^r\Big)\,.
\end{align*}
Taking the $r$-th roots and then the infimum over all
decompositions $\{u_{i,l}\}$ of $f_i$, we get assertion
\eqref{rineq} (by the
equivalence of the $J$- and $K$-methods). The operator $T$ 
extends to $ \prod_{i=1}^n\overline {X}^i_{\theta_i, r}$ since $X_0^i\cap X_1^i$ is dense in
$\overline {X}^i_{\theta_i, r}$.
\end{proof}

\subsection*{Spaces of vector-valued sequences}
We use two results on interpolation of  $\ell^p_s(X)$ spaces, 
 for  quasi-normed  $X$ 
and $0<p\le\infty$, $s\in \bbR$.
For fixed $X$ the following standard 
formula for the real interpolation spaces can be found in \S 5.6 of \cite{BL}.
\Be\label{seqinterpolfixedA}
\begin{aligned}\big(\ell_{s_0}^{q_0}(X),\ell_{s_1}^{q_1}(X)\big)_{\vth,q}= 
\ell_s^q(X), \quad s=(1-\vth)s_0+\vth s_1&
\\
\qquad\text{provided that } \, s_0\neq s_1,\,\,0<q_0\le\infty, \,\, 0<q_1\le\infty\,.&
\end{aligned}
\Ee

Next consider the space $(\ell^r_{s_0} (X_0), \ell^r_{s_1} (X_1))_{\vth, q}$ 
for a 
pair of compatible  quasi-normed spaces $(X_0, X_1)$.
The following lemma is essentially in Cwikel's paper \cite{Cw} 
who considered  normed spaces.  We include a proof for the convenience of the reader.

\begin{lemma}\label{quasi}
Suppose that $X_0$ and $X_1$ are compatible quasi-normed spaces.
Let $0<r\le \infty$, $s_0$, $s_1 \in \bbR$, and $0<\vth <1$. If $
r\le q\le \infty$, then there is the continuous embedding
\begin{equation}\label{obvious}
\ell^r_s ( (X_0, X_1)_{\vth, q})\hookrightarrow 
(\ell^r_{s_0} (X_0), \ell^r_{s_1} (X_1))_{\vth, q}, \quad
s= (1-\vth)s_0 + \vth s_1\,.
\end{equation}
\end{lemma}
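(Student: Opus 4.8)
The plan is to argue directly with the $K$-functional descriptions of both interpolation spaces, splitting sequences coordinatewise. Write $\overline\cL=(\ell^r_{s_0}(X_0),\ell^r_{s_1}(X_1))$, put $\Delta=s_1-s_0$, and fix $F=\{F_k\}_{k\in\bbZ}$ with each $F_k\in\overline X_{\vth,q}$, normalized so that $\|F\|_{\ell^r_s(\overline X_{\vth,q})}=1$. The central elementary ingredient is a pointwise-in-$t$ estimate for $K(t,F;\overline\cL)$. For each $k$ choose a splitting $F_k=G_k+H_k$ with $\|G_k\|_{X_0}+\tau_k\|H_k\|_{X_1}\le 2K(\tau_k,F_k;\overline X)$, where $\tau_k:=t\,2^{\Delta k}$. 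Since $\overline X_{\vth,q}\hookrightarrow\overline X_{\vth,\infty}$ gives $K(\tau,F_k;\overline X)\lc\tau^\vth\|F_k\|_{\overline X_{\vth,q}}$ for all $\tau>0$, one checks that $\{G_k\}\in\ell^r_{s_0}(X_0)$ and $\{H_k\}\in\ell^r_{s_1}(X_1)$, so $F=\{G_k\}+\{H_k\}$ is an admissible splitting of $F$ in $\overline\cL$. Using $(a+b)^r\le a^r+b^r$ for $0<r\le1$, the identity $t^r2^{s_1rk}=2^{s_0rk}\tau_k^r$, and $\max\{\|G_k\|_{X_0},\tau_k\|H_k\|_{X_1}\}\le 2K(\tau_k,F_k;\overline X)$,
\[
K(t,F;\overline\cL)^r\le\sum_k 2^{s_0rk}\bigl(\|G_k\|_{X_0}^r+\tau_k^r\|H_k\|_{X_1}^r\bigr)\le 2^{1+r}\sum_k 2^{s_0rk}\,K\bigl(t\,2^{\Delta k},F_k;\overline X\bigr)^r.
\]

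Next I would insert this into the defining norm $\|F\|_{\overline\cL_{\vth,q}}^q=\sum_{l\in\bbZ}[2^{-l\vth}K(2^l,F;\overline\cL)]^q$ (a supremum over $l$ when $q=\infty$). Raising the previous display to the power $q/r$ with $t=2^l$, summing in $l$, and applying Minkowski's inequality in $\ell^{q/r}$ over the index $l$ — the one place where the hypothesis $r\le q$ is used — I would obtain
\[
\|F\|_{\overline\cL_{\vth,q}}^r\lc\sum_k 2^{s_0rk}\Bigl(\sum_l\bigl[2^{-l\vth}K(2^{l+\Delta k},F_k;\overline X)\bigr]^q\Bigr)^{r/q}.
\]
For fixed $k$ the inner sum equals $2^{\vth\Delta kq}\sum_l[2^{-(l+\Delta k)\vth}K(2^{l+\Delta k},F_k;\overline X)]^q$, and since $\Delta k$ need not be a dyadic exponent I would compare this shifted discrete sum with the unshifted one $\|F_k\|_{\overline X_{\vth,q}}^q$ by writing $\Delta k$ as an integer plus a fractional part $\sigma\in[0,1)$ and using the monotonicity and quasi-concavity of $t\mapsto K(t,F_k;\overline X)$; this gives comparability with constants depending only on $\vth,q$. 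Thus the inner factor is $\approx 2^{\vth\Delta kq}\|F_k\|_{\overline X_{\vth,q}}^q$, and since $s_0+\vth\Delta=(1-\vth)s_0+\vth s_1=s$ we conclude $\|F\|_{\overline\cL_{\vth,q}}^r\lc\sum_k[2^{sk}\|F_k\|_{\overline X_{\vth,q}}]^r=\|F\|_{\ell^r_s(\overline X_{\vth,q})}^r=1$, which is the embedding \eqref{obvious}. The case $q=\infty$ runs identically, with the $\ell^q_l$-sums replaced by suprema and Minkowski's inequality replaced by the $r$-triangle inequality in the $k$-variable.

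The main obstacle is exactly this non-dyadic reindexing $2^l\mapsto 2^{l+\Delta k}$: one must ensure that the comparison of $\sum_l[2^{-(l+\Delta k)\vth}K(2^{l+\Delta k},f)]^q$ with $\sum_l[2^{-l\vth}K(2^l,f)]^q$ is uniform in the shift. This follows from the elementary bounds $K(t,f)\le K(2^\sigma t,f)\le 2K(t,f)$ and $2^{-\vth}\le 2^{-\sigma\vth}\le 1$ for $0\le\sigma<1$, which let one re-sum over the translated dyadic grid at the cost of fixed multiplicative constants (one may instead pass to the integral form of the $K$-method norm, where the shift is harmless). The remaining ingredients — the coordinatewise splitting, the inequality $(a+b)^r\le a^r+b^r$, Minkowski's inequality, and the admissibility check via $\overline X_{\vth,q}\hookrightarrow\overline X_{\vth,\infty}$ — are routine.
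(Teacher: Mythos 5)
Your proof is correct and follows essentially the same route as the paper's: coordinatewise near-optimal splittings of $F_k$ at the rescaled parameter $t\,2^{k(s_1-s_0)}$, a bound of $K(t,F)$ for the couple of sequence spaces by the $\ell^r$-sum of $2^{ks_0}K(t\,2^{k(s_1-s_0)},F_k;\overline X)$, then Minkowski's inequality in $\ell^{q/r}$ (the only place $r\le q$ is used), and finally the rescaling that produces $2^{ks}\|F_k\|_{\overline X_{\vth,q}}$ with $s=(1-\vth)s_0+\vth s_1$. The only differences are cosmetic: the paper works with the integral form of the $K$-method norm, so the exact change of variables $u=2^{k(s_1-s_0)}t$ replaces your non-dyadic grid-shift comparison (as you yourself note), and its two-sided comparison of the split norms covers all $0<r\le\infty$, whereas your use of $(a+b)^r\le a^r+b^r$ as written treats $r\le 1$ and needs the trivial termwise-domination variant for $r>1$.
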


{\it Remark.} When $0< q \le r$, a slight modification of the argument sketched
below shows that the inclusion in \eqref{obvious} reverses
direction. In particular, equality holds when $q=r$. But this fact
is not needed here. 
Examples disproving the  equality in the cases $q\neq r$ are in 
\cite{Cw}.

\begin{proof}[Proof of Lemma \ref{quasi}]
Let $f=\{f_k\} \in \ell^r_{s_0} (X_0)+ \ell^r_{s_1} (X_1)$. Fix
$t>0$ and $\varepsilon>0$. For each $k \in \bbZ$, choose $f_{0,k}$
and $f_{1,k}$ with $f_k = f_{0,k} + f_{1,k}$ such that
$$ \| f_{0,k}\|_{X_0} + 2^{k(s_1-s_0)} t \, \| f_{1,k}\|_{X_1} \le
(1+\varepsilon) K(2^{k(s_1-s_0)} t, f_k; \overline X) .$$

Let $W_0 = \ell^r_{s_0} (X_0)$,
$W_1 = \ell^r_{s_1} (X_1)$, and let 
${K}(t,f; \overline W)$
 be  the
$K$-functional for the pair $(W_0, W_1)$. Then %
\begin{align*}
K(t,f; \overline W) &\le \Big( \sum_k [2^{ks_0} \|
f_{0,k}\|_{X_0}]^r \Big)^{1/r} + t \, \Big( \sum_k [2^{ks_1} \|
f_{1,k}\|_{X_1}]^r \Big)^{1/r} \\
& \approx \Big( \sum_k \big[2^{ks_0} \| f_{0,k}\|_{X_0} + 2^{ks_1} t
 \| f_{1,k}\|_{X_1})\big]^r \Big)^{1/r}\\
&\le (1+\varepsilon) \Big( \sum_k \big[2^{ks_0} K(2^{k(s_1-s_0)} t,
f_k; \overline X)]^r \Big)^{1/r} .
\end{align*}

If $r\le q<\infty$, then it follows by Minkowski's inequality that
\begin{align*} \| f\|_{\overline W_{\vth, q} }
&= \Big( \int_0^{\infty} \big[t^{-\vth} {K}(t,f;\overline W) \big]^q
\frac{dt}{t}
\Big)^{1/q}\\
&\lc \Big( \int_0^{\infty} t^{-\vth q} \Big( \sum_k [2^{ks_0}
K(2^{k(s_1-s_0)} t, f_k; \overline X)]^r \Big)^{q/r} {dt\over
t}\Big)^{1/q}\\
& \le \Big( \sum_k \Big[ \int_0^{\infty} \Big( t^{-\vth }\,2^{ks_0} K(2^{k(s_1-s_0)} t, f_k; \overline X) \Big)^{q} {dt\over
t}\Big]^{r/q} \Big)^{1/r} .
\end{align*}
Let  $s= (1-\vth)s_0 + \vth s_1$. 
By the change of variables $u =2^{k(s_1-s_0)} t$, we see that the right hand side 
of the last display equals a constant multiple of
\begin{align*}
&\Big( \sum_k \Big[ 2^{ks} \Big(\int_0^{\infty} \Big( u^{-\vth }
 K(u, f_k; \overline X) \Big)^{q} {du\over
u}\Big)^{1/q}\Big]^r \Big)^{1/r} \\
& = \Big( \sum_k \big[ 2^{ks} \| f_k\|_{\overline X_{\vth,q}} \big]^r
\Big)^{1/r}  = \| f\|_{\ell^r_s (\overline X_{\vth,q})}\,.
\end{align*}
The case $q=\infty$ is similar.
\end{proof}

\medskip

\noindent{\it Acknowledgement.}
The first named author would like to thank Sanghyuk Lee for several useful conversations about the subject matter. We also thank the referee for valuable 
comments.

\end{document}